\theoremstyle{plain}
\newtheorem{thm}{Theorem}[section]
\newtheorem{lemma}[thm]{Lemma}
\newtheorem{prop}[thm]{Proposition}
\newtheorem{cor}[thm]{Corollary}
\newtheorem{qtn}[thm]{Question}
\theoremstyle{definition}
\newtheorem{defn}[thm]{Definition}
\newtheorem{ex}[thm]{Example}
\newtheorem{rem}[thm]{Remark}
\theoremstyle{remark}
 \DeclareMathOperator{\Out}{Out}
\DeclareMathOperator{\SL}{SL} \DeclareMathOperator{\PSL}{PSL} 
\DeclareMathOperator{\GL}{GL} 
 \DeclareMathOperator{\PO}{PO}
 \DeclareMathOperator{\SO}{SO} 
\DeclareMathOperator{\PU}{PU} 
\DeclareMathOperator{\SU}{SU}
 \DeclareMathOperator{\Ad}{Ad}
\DeclareMathOperator{\inn}{inn}
\DeclareMathOperator{\Isom}{Isom}
\newcommand{\al}{\alpha}
\newcommand{\sig}{\sigma}
\newcommand{\wh}{\widehat}
\newcommand{\bs}{\backslash}
\newcommand{\bbB}{\mathbb{B}}
\newcommand{\bbC}{\mathbb{C}}
\newcommand{\bbH}{\mathbb{H}}
\newcommand{\bbP}{\mathbb{P}}
\newcommand{\bbQ}{\mathbb{Q}}
\newcommand{\bbR}{\mathbb{R}}
\newcommand{\bbZ}{\mathbb{Z}}
\newcommand{\bfA}{\mathbf{A}}
\newcommand{\bfB}{\mathbf{B}}
\newcommand{\bfG}{\mathbf{G}}
\newcommand{\bfH}{\mathbf{H}}
\newcommand{\bfI}{\mathbf{I}}
\newcommand{\bfJ}{\mathbf{J}}
\newcommand{\bfL}{\mathbf{L}}
\newcommand{\bfS}{\mathbf{S}}
\newcommand{\bfV}{\mathbf{V}}
\newcommand{\frakg}{\mathfrak{g}}
\newcommand{\frakh}{\mathfrak{h}}
\newcommand{\frakj}{\mathfrak{j}}
\newcommand{\frakl}{\mathfrak{l}}
\newcommand{\fraks}{\mathfrak{s}}
\newcommand{\fraku}{\mathfrak{u}}
\newcommand{\gam}{\gamma}
\newcommand{\Gam}{\Gamma}
\newcommand{\Del}{\Delta}
\newcommand{\lam}{\lambda}
\newcommand{\Lam}{\Lambda}
\DeclareMathOperator{\U}{U}
\DeclareMathOperator{\M}{M}
\DeclareMathOperator{\Tr}{Tr}
\DeclareMathOperator{\supp}{supp}
\newcommand{\conj}{\overline}
\newcommand{\wt}{\widetilde}
\newcommand{\N}{\ensuremath{\mathbb{N}}}
\newcommand{\Q}{\ensuremath{\mathbb{Q}}}
\newcommand{\R}{\ensuremath{\mathbb{R}}}
\newcommand{\Z}{\ensuremath{\mathbb{Z}}}
\newcommand{\C}{\ensuremath{\mathbb{C}}}
\newcommand{\calC}{\ensuremath{\mathcal{C}}}
\newcommand{\calL}{\ensuremath{\mathcal{L}}}
\newcommand{\calO}{\ensuremath{\mathcal{O}}}
\newcommand{\calS}{\ensuremath{\mathcal{S}}}
\newcommand{\bH}{\ensuremath{\mathbf{H}}}
\newcommand{\ssm}{\smallsetminus}
\title[Arithmeticity and superrigidity for $\SU(n,1)$]{Arithmeticity, Superrigidity and Totally Geodesic submanifolds of Complex Hyperbolic manifolds}
\author[U Bader]{Uri Bader}
\address{Weizmann Institute of Science}
\email{bader@weizmann.ac.il}
\author[D Fisher]{David Fisher}
\address{Department of Mathematics\\Rice University\\Houston, TX 77005}
\email{davidfisher@rice.edu}
\author[N Miller]{Nicholas Miller}
\address{Department of Mathematics\\University of Oklahoma\\Norman, OK 73019}
\email{nickmbmiller@ou.edu}
\author[M Stover]{Matthew Stover}
\address{Department of Mathematics\\Temple University\\Philadelphia, PA 19122}
\email{mstover@temple.edu}
\dedicatory{To Domingo Toledo with admiration on the occasion of his 75th birthday}
\begin{document}

\begin{abstract}
For $n \ge 2$, we prove that a finite volume complex hyperbolic $n$-manifold containing infinitely many maximal properly immersed totally geodesic submanifolds of real dimension at least two is arithmetic, paralleling our previous work for real hyperbolic manifolds. As in the real hyperbolic case, our primary result is a superrigidity theorem for certain representations of complex hyperbolic lattices. The proof requires developing new general tools not needed in the real hyperbolic case. Our main results also have a number of other applications.  For example, we prove nonexistence of certain maps between complex hyperbolic manifolds, which is related to a question of Siu, that certain hyperbolic $3$-manifolds cannot be totally geodesic submanifolds of complex hyperbolic manifolds, and that arithmeticity of complex hyperbolic manifolds is detected purely by the topology of the underlying complex variety, which is related to a question of Margulis. Our results also provide some evidence for a conjecture of Klingler that is a broad generalization of the Zilber--Pink conjecture.
\end{abstract}

\maketitle

\section{Introduction}\label{sec:Intro}

Throughout this paper, a \emph{geodesic submanifold} will always mean a properly immersed totally geodesic subspace, and a geodesic submanifold is called \emph{maximal} if it is not contained in a proper geodesic submanifold of larger dimension. Here and throughout a finite volume complex hyperbolic $n$-manifold will always have complex dimension $n$. In this paper, we will prove the following.

\begin{thm}[Arithmeticity] \label{thm:main}
Suppose that $n \ge 2$ and $M$ is a finite volume complex hyperbolic $n$-manifold containing infinitely many maximal totally geodesic submanifolds of real dimension at least $2$. Then $M$ is arithmetic.
\end{thm}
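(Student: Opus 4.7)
The overall strategy is to invoke the Margulis commensurator criterion: a lattice $\Gamma$ in a rank one simple Lie group $G$ is arithmetic if and only if its commensurator $\Comm_G(\Gamma)$ is dense in $G$, equivalently $[\Comm_G(\Gamma):\Gamma] = \infty$. The goal is therefore to promote the infinitude of maximal geodesic submanifolds of $M$ into an infinite supply of commensurating elements of $\Gamma = \pi_1(M)$ inside $G = \SU(n,1)$.

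Each maximal totally geodesic submanifold $N_\alpha \subset M$ of real dimension at least $2$ has stabilizer (up to finite index) a proper connected reductive subgroup $H_\alpha \leq G$ that is, up to isogeny, either $\SU(k,1)$ for some $1 \leq k < n$ (when $N_\alpha$ is a complex geodesic submanifold) or $\SO(k,1)$ for some $2 \leq k \leq n$ (when $N_\alpha$ is totally real), and in each case $\Lambda_\alpha := \Gamma \cap H_\alpha$ is a lattice in $H_\alpha$. After passing to an infinite subcollection I may assume that all the $H_\alpha$ are $G$-conjugates of a single fixed $H \leq G$.

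The heart of the argument would be a superrigidity theorem tailored to this setting, roughly of the following form: if a representation $\rho : \Gamma \to L$, into an appropriate real algebraic target $L$, has the property that $\rho|_{\Lambda_\alpha}$ extends to $H_\alpha$ for infinitely many $\alpha$, then $\rho$ extends to all of $G$. I would build this in the spirit of the Bader--Furman framework used in the real hyperbolic case: use the abundance of the $\Lambda_\alpha$ to produce a $\Gamma$-equivariant measurable boundary map from $\partial \CHy^n$ into a suitable homogeneous variety attached to $L$, study the algebraic hulls of $\rho(\Lambda_\alpha)$ and their interaction with the local extensions to $H_\alpha$, and glue these local extensions into a global one using ergodicity of the $\Gamma$-action on boundary pairs. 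Once the superrigidity theorem is in place, applying it to a representation valued in $G$ itself, built from the moduli of the family $\{H_\alpha\}$, would produce elements of $G \setminus \Gamma$ that commensurate $\Gamma$, contradicting non-arithmeticity via Margulis's criterion.

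The main obstacle — and the reason the complex hyperbolic case is substantially harder than the real one — is that $G$ has real rank $1$, so classical higher-rank Margulis superrigidity is unavailable and subtler dynamical and cohomological input is needed. Compounding this, totally geodesic subspaces of $\CHy^n$ come in two geometrically distinct types, complex and totally real, and the corresponding stabilizers $H_\alpha$ sit inside $\SU(n,1)$ in genuinely different ways; a workable superrigidity theorem must handle both types uniformly, and even within a fixed type one must contend with targets $L$ where the Zariski closure of $\rho(\Lambda_\alpha)$ may carry a Hermitian structure incompatible with the ambient $L$. Developing the ``new general tools'' announced in the abstract — presumably refined algebraic hull and extension theorems for Zariski-dense subgroups that accommodate this real/complex divide — is the most delicate step of the program and is where I would expect the bulk of the work to lie.
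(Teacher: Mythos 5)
Your plan diverges from the paper at the decisive step, and that step contains a genuine gap. The paper does not use the commensurator criterion at all. It follows Margulis's original arithmeticity scheme: after Theorem~\ref{thm:SRht} one knows $\Gam$ sits in $\bfG(\ell)$ for a totally real adjoint trace field $\ell$, and arithmeticity is equivalent to precompactness of the Galois-conjugate and nonarchimedean representations $\rho_v:\Gam\to\bfG(\ell_v)$ for every place $v\neq v_0$; one assumes some $\rho_v(\Gam)$ is unbounded and applies the superrigidity Theorem~\ref{thm:SR} to extend $\rho_v$ continuously to $G$, which is absurd. Your closing move --- ``applying superrigidity to a representation valued in $G$ itself, built from the moduli of the family $\{H_\alpha\}$, would produce elements of $G\setminus\Gam$ commensurating $\Gam$'' --- is not an argument: there is no candidate representation specified (the lattice embedding extends trivially and yields nothing), and there is no known mechanism by which superrigidity or the existence of infinitely many geodesic submanifolds manufactures commensurating elements. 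Indeed, avoiding any such direct construction is exactly why the proof runs through homogeneous dynamics: Ratner, Dani--Margulis and Mozes--Shah are used (Proposition~\ref{prop:BFMS3.1} and Proposition~\ref{prop:setup}) to convert the infinite family of maximal submanifolds into a $W$-invariant ergodic measure on $(G\times\bbP(V))/\Gam$ projecting to Haar measure, and that measure --- not an extension hypothesis ``$\rho|_{\Lambda_\alpha}$ extends to $H_\alpha$'' --- is the actual hypothesis of the superrigidity theorem.

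Two further ingredients your sketch omits are essential and cannot be absorbed into a generic Bader--Furman argument. First, the target $\PU(n,1)$ is genuinely incompatible with $\SU(n,1)$ in the sense of Definition~\ref{def:comp}; the paper isolates the failure (Proposition~\ref{prop:comppu}: the obstructing subgroup is the center of the unipotent radical of a parabolic) and resolves it by a fiber-product argument reducing to Pozzetti's chain rigidity theorem on $\partial\bbB^n$. Second, superrigidity alone never handles the pair $(\bbC,\SL_{n+1}(\bbC))$ arising at complex places of the trace field; the paper needs Theorem~\ref{thm:SRht} (Simpson's Hodge-type theorem and its quasi-projective extension, plus Esnault--Groechenig for integrality) to show $\ell$ is totally real and the real forms are $\SU(r_v,s_v)$, so that only compatible targets and $\PU(n,1)$ remain. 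Without these, your program cannot close even if the commensurator step were repaired.
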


In previous work, we proved arithmeticity of \emph{real} hyperbolic manifolds containing infinitely many maximal geodesic subspaces of dimension at least two \cite[Thm.\ 1.1]{BFMS}, which answered a question independently due to Alan Reid and Curtis McMullen \cite{Curt}.  As in that case, Theorem~\ref{thm:main} can be restated purely in terms of homogeneous dynamics, where one obtains the exact same statement as \cite[Thm.\ 1.5]{BFMS}. We also note that, after learning of our result, Baldi and Ullmo recently gave a very different proof of Theorem \ref{thm:main} in the special case of totally geodesic complex subvarieties \cite{BaldiUllmo}.

To answer Reid and McMullen's question we introduced the notion of \emph{compatibility} of an algebraic group $\bfH$ over a local field $k$ with a semisimple Lie group $G$ and used this with $G = \SO_0(n,1)$ to prove a \emph{superrigidity theorem} for certain representations of real hyperbolic lattices \cite[Thm.\ 1.6]{BFMS}. However, for $G=\SU(n,1)$ superrigidity with compatible targets is not enough to prove arithmeticity. Thus, in this paper we introduce new tools for proving superrigidity theorems in rank one with target groups that are not compatible with $G$. While these new tools can be used for $\SO_0(n,1)$, see for instance Remark \ref{rem:SOremark}, our focus will be to apply these ideas in the case $G = \SU(n,1)$ to prove Theorem \ref{thm:main} and the other applications described later in this introduction.

It is well-known that if $G$ is a connected adjoint semisimple Lie group with no compact factors and $\Gam < G$ is an irreducible nonarithmetic lattice, then $G$ is necessarily isomorphic to either $\PO_0(n,1)$ or $\PU(n,1)$ for some $n \ge 2$. Therefore, combining \cite[Thm.\ 1.1]{BFMS} and Theorem~\ref{thm:main} with the fact that geodesic subspaces of arithmetic manifolds are arithmetic yields:

\begin{cor}[Finiteness]\label{cor:main}
Let $M$ be a nonarithmetic finite volume irreducible locally symmetric space of noncompact type. Then
\begin{enumerate}
\item $M$ contains only finitely many maximal geodesic subspaces of real dimension at least $2$, and
\item $M$ contains only finitely many nonarithmetic geodesic subspaces of real dimension at least $2$.
\end{enumerate}
\end{cor}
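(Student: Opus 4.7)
The plan is to combine the cited dichotomy for connected adjoint semisimple Lie groups admitting nonarithmetic irreducible lattices with the two arithmeticity theorems, and to bootstrap part (1) into part (2) via an induction on dimension.

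First, the hypothesis on $M$ together with the fact recalled just before the corollary forces the universal cover of $M$ to be isometric to either real hyperbolic space $\Hy^n_{\bbR}$ or complex hyperbolic space $\Hy^n_{\bbC}$. Part (1) then follows by contrapositive: if $M$ contained infinitely many maximal geodesic submanifolds of dimension at least $2$, then \cite[Thm.~1.1]{BFMS} in the real case, and Theorem~\ref{thm:main} above in the complex case, would each force $M$ to be arithmetic, contradicting the hypothesis.

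For part (2), I would induct on $\dim M$; the base case (say $\dim M=2$) is essentially vacuous, as the only geodesic submanifold of dimension $\ge 2$ is $M$ itself. For the inductive step, part (1) supplies finitely many maximal proper geodesic submanifolds $N_1,\dots,N_r\subset M$ of dimension $\ge 2$, and every other geodesic submanifold of $M$ of dimension $\ge 2$ is contained in some $N_i$. Any totally geodesic submanifold of $\Hy^n_{\bbR}$ or $\Hy^n_{\bbC}$ is modeled on $\Hy^k_{\bbR}$ or $\Hy^k_{\bbC}$ for some $k$; in particular each $N_i$ is a finite volume irreducible locally symmetric space of noncompact type of rank one. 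If some $N_i$ contains a nonarithmetic geodesic submanifold, then $N_i$ itself must be nonarithmetic, since geodesic submanifolds of arithmetic locally symmetric spaces are arithmetic. So each such $N_i$ satisfies the hypotheses of the corollary, and since $\dim N_i<\dim M$, the inductive hypothesis supplies finitely many nonarithmetic geodesic submanifolds of $N_i$ of dimension $\ge 2$. Summing over the finitely many $i$ (and including $M$ itself if one chooses to count it) yields the desired finiteness.

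The substance lies entirely in the two arithmeticity theorems; the only point in the reduction requiring any care is that the class of spaces to which the inductive hypothesis applies is closed under passage to geodesic submanifolds of dimension $\ge 2$, and this closure is immediate from the rank-one structure of the universal cover. Consequently no serious obstacle arises in this corollary beyond what has already been resolved in Theorem~\ref{thm:main}.
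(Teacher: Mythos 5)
Your proposal is correct and follows essentially the same route as the paper, which deduces the corollary in one line from the dichotomy $\PO_0(n,1)$/$\PU(n,1)$ for nonarithmetic irreducible lattices, the two arithmeticity theorems, and the fact that geodesic subspaces of arithmetic manifolds are arithmetic. Your induction on dimension for part (2), together with the observation that a proper geodesic subspace of dimension at least $2$ lies in a maximal one and that the relevant class of rank-one spaces is closed under passing to geodesic subspaces, is just the natural fleshing-out of the deduction the paper leaves implicit.
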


Note that a complex hyperbolic $n$-manifold can contain geodesic submanifolds of real dimension $k$ that are real hyperbolic ($2 \le k \le n$) or complex hyperbolic ($2 \le k \le 2 n - 2$ even), see \S\ref{ssec:arithex} for examples. We also remark that there are currently only $24$ known commensurability classes of nonarithmetic complex hyperbolic manifolds of finite volume, twenty-two in complex dimension $2$ and two in complex dimension $3$ \cite{Deraux3d}. Finding more examples or any examples in complex dimension $4$ and higher is an important open problem, e.g., see \cite[Prob.\ 9]{MargulisPCR} or \cite[Conj.\ 2.6]{Kapovich}.

We also note that the current technology for building nonarithmetic examples is consistent with the strategies in the real hyperbolic setting in that it produces examples with special geodesic subspaces. In particular, all known nonarithmetic complex hyperbolic lattices are commensurable with complex reflection groups; see \cite{DPP} for a comprehensive discussion for $\PU(2,1)$. There are also some attempts to use a hybrid construction inspired by the work of Gromov and Piatetski-Shapiro \cite{GPS}, for example \cite[Conj.\ 2.7]{Kapovich} or \cite{PaupertWells}. In this direction Theorem~\ref{thm:main} can be viewed as a criterion for (non)arithmeticity that can be applied when one has intimate knowledge of the collection of geodesic submanifolds, and we will interpret this criterion in terms of the topology of $M$ as an algebraic variety in Theorem \ref{thm:mainAG} below.

\medskip

We now state our general superrigidity result. Part \eqref{thm:SRcomp} mirrors \cite[Thm.\ 1.6]{BFMS}. However, the tools developed in \cite{BFMS} cannot be used to prove part \eqref{thm:SRpun1}. The heart of this paper develops new tools for proving superrigidity and applies them to prove part \eqref{thm:SRpun1}.

\begin{thm}[Superrigidity]\label{thm:SR}
Let $G$ be $\SU(n,1)$ for $n \ge 2$, $W < G$ be a noncompact connected almost simple subgroup, and $\Gam < G$ be a lattice. Suppose that $k$ is a local field, $\bfH$ is a connected adjoint $k$-algebraic group, and $\rho : \Gam \to \bfH(k)$ is a homomorphism with unbounded, Zariski dense image. Moreover, suppose that there is a faithful irreducible representation of $\bfH(k)$ on a $k$-vector space $V$ of dimension at least two and a $W$-invariant, ergodic measure $\nu$ on $(G \times \bbP(V)) / \Gam$ that projects to Haar measure on $G / \Gam$. If either
\begin{enumerate}

\item the pair consisting of $k$ and $\bfH$ is compatible with $G$, or \label{thm:SRcomp}

\item $k = \bbR$ and $\bfH(\bbR) \cong \PU(n,1)$, \label{thm:SRpun1}

\end{enumerate}
then $\rho$ extends to a continuous homomorphism from $G$ to $\bfH(k)$.
\end{thm}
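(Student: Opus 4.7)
The plan is to follow the Margulis--Bader--Furman superrigidity scheme in three steps: (i) use the $W$-invariant measure $\nu$ to build a measurable $\rho$-equivariant boundary map from a $G$-boundary into $\bbP(V)$; (ii) prove that this map is essentially algebraic; (iii) extract a continuous extension of $\rho$ to $G$.

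For step (i), I would disintegrate $\nu$ over Haar measure on $G/\Gam$ to obtain a measurable section $x \mapsto \nu_x$ of probability measures on $\bbP(V)$ that is $\rho$-equivariant in the appropriate sense. The $W$-invariance of $\nu$ together with ergodicity forces $x \mapsto \nu_x$ to descend through an amenable subgroup $Q \supset W$ of $G$; a standard Furstenberg-type argument then yields a measurable $\Gam$-equivariant map $\phi \co G/Q \to \bbP(V)$, whose essential image is Zariski dense in the closure of an orbit of $\rho(\Gam)$ thanks to the unboundedness and Zariski density of $\rho(\Gam)$.

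For part \eqref{thm:SRcomp}, the compatibility hypothesis is tailored precisely for this situation: the Bader--Furman algebraic-representations theorem applied to $\phi$ together with compatibility of $(k,\bfH)$ with $G$ forces $\phi$ to factor through an $\bfH$-homogeneous projective variety, at which point the argument of \cite[Thm.\ 1.6]{BFMS} goes through with only cosmetic changes to produce the desired continuous extension $G \to \bfH(k)$.

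Part \eqref{thm:SRpun1} is the crux and requires genuinely new ideas because $\bfH(\bbR) \cong \PU(n,1)$ has the same real rank and isogeny type as $G$, so the compatibility framework of \cite{BFMS} simply does not apply. My strategy is to exploit the distinguished $\rho(\Gam)$-invariant projective copy of the boundary sphere $\partial \CHy^n \hookrightarrow \bbP(V)$ coming from the structure of $\PU(n,1)$, and to show that $\phi$ must take values almost surely in this boundary. Concretely, I would: (a) use the $W$-invariance of $\nu$ and a stabilizer analysis involving centralizers of $W$ in $G$ to constrain $\phi$ to a proper $\bfH(\bbR)$-invariant closed subvariety of $\bbP(V)$; (b) combine this with the Zariski density of $\rho(\Gam)$ in $\bfH(\bbR)$ and a classification of $\bfH(\bbR)$-invariant projective subvarieties of $\bbP(V)$ to show that the only possibility compatible with the dynamics of $W$ on $G/\Gam$ is that $\phi$ lands in $\partial \CHy^n$; (c) apply a Mostow--Furstenberg type boundary theorem to the resulting measurable $\Gam$-equivariant map between Furstenberg boundaries of two rank-one symmetric spaces of the same type, thereby producing the continuous homomorphism $G \to \bfH(\bbR)$ extending $\rho$. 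The hardest step is (b), where the absence of compatibility demands new tools to rule out exotic higher-dimensional $\bfH(\bbR)$-invariant projective targets; I expect the argument to turn on a careful interplay between the ergodic dynamics of $W$ (and its centralizers) on $G/\Gam$ and the algebraic geometry of $\bfH(\bbR)$-orbit closures in $\bbP(V)$, and this is where the paper's main technical contribution should lie.
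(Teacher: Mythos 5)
The decisive gap is in your treatment of part \eqref{thm:SRpun1}, at step (c). There is no ``Mostow--Furstenberg type boundary theorem'' that upgrades a merely measurable $\Gam$-equivariant map $\partial \bbB^n \to \partial \bbB^n$ into a continuous extension of $\rho$: since the minimal parabolic of $G$ is amenable, a measurable equivariant boundary map into $\partial \bbB^n$ (or into probability measures on it, refined to points by Zariski density) exists for essentially \emph{any} Zariski dense unbounded $\rho:\Gam \to \PU(n,1)$, so its existence alone carries no rigidity. What actually makes the extension possible is extra incidence-geometric structure: one must show that the boundary map sends almost every triple of points lying on a chain to a triple of points lying on a chain, then invoke Pozzetti's chain rigidity theorem (a measurable chain-preserving self-map of $\partial\bbB^n$ with Zariski dense essential image is rational), and only then apply Margulis's rationality lemma to extend $\rho$. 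Producing the chain-preserving property is where the paper's new work lies: the gate machinery yields a map $\Phi : G \to H/\conj{Z}$, equivariant for a homomorphism $\tau : P \to Q/\conj{Z}$ with one-dimensional kernel, where $\conj{Z}$ is the center of the unipotent radical of a proper parabolic $Q < H$ (this target is forced by the incompatibility analysis, Proposition~\ref{prop:comppu}); the automorphism theory of $P$ (Proposition~\ref{prop:autP}) normalizes $\tau$, the quotient $(H/\conj{Z})/\tau(P\cap D)$ is identified with the space $\mathcal{C}_1$ of pairs (chain, point on that chain), and a fiber-product argument (Lemma~\ref{lem:fiberedproduct}) shows the induced map on triples lands in $\mathcal{C}^0_3$. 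None of this is recoverable from your steps (a)--(b): a map landing in $\partial\CHy^n \subset \bbP(V)$ retains no memory of chains, and your centralizer/orbit-closure analysis would at best reproduce the boundary map one already has for free, not the additional map to $H/\conj{Z}$ that encodes the chain data.

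Two further points. In step (i), there is no amenable subgroup $Q \supset W$ of $G$, since $W$ is noncompact and almost simple, so the proposed Furstenberg-type factorization through $G/Q$ is not available; in the paper the measure $\nu$ instead produces a $W$-invariant, $\Gam$-equivariant map $G \to \bfH/\bfL(k)$ (Proposition~\ref{prop:map}), while the twisted-equivariant maps driving the proof are gates for unipotent subgroups $U' < W$ coming from Bader--Furman algebraic representation theory and mixing, with $P$-equivariance obtained by climbing $U' \lhd U'Z \lhd U \lhd P$. Your sketch of part \eqref{thm:SRcomp} does match the paper's route (reduce to the compatible-target argument of \cite{BFMS} via the gate), but as written the heart of the theorem, part \eqref{thm:SRpun1}, rests on an unavailable black box and omits the incidence-geometry mechanism that the proof actually requires.
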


We briefly recall the definition of compatibility from \cite{BFMS}. Let $P$ be a minimal parabolic subgroup of $G$ and $U$ its unipotent radical (see \S\ref{subsec:subgroups}). A pair consisting of a local field $k$ and a $k$-algebraic group $\bfH$ is said to be \emph{compatible} with $G$ if for every nontrivial $k$-subgroup $\bfJ<\bH$ and any continuous homomorphism $\tau:P \rightarrow N_{\bfH}(\bfJ)/\bfJ(k)$, where $N_{\bfH}(\bfJ)$ is the normalizer of $\bfJ$ in $\bfH$, we have that the Zariski closure of $\tau(U^\prime)$ coincides with the Zariski closure of $\tau(U)$ for every nontrivial subgroup $U^\prime<U$. Note that minimal parabolic subgroups are unique up to conjugation, so this is a property of $G$, not just $P$. We will extend this definition and elaborate on it in \S\ref{ssec:Compatible}.

\begin{rem}
Margulis asked when superrigidity holds for arithmetic complex hyperbolic lattices \cite[Prob.\ 9]{MargulisPCR}, and our results provide a partial answer to his question. Indeed, Theorem \ref{thm:SR} proves superrigidity of certain representations of arithmetic complex hyperbolic lattices, and we will describe applications of this later in the introduction. We also note that there are a number of previous superrigidity results for particular representations of complex hyperbolic lattices. For example, see the famous work of Toledo \cite{Toledo} and Corlette \cite{Corlette} on what are now known as \emph{maximal representations}, and see more recent work of Burger--Iozzi \cite{BurgerIozzi}, Pozzetti \cite{Pozzetti}, and Koziarz--Maubon \cite{KoziarzMaubon} for more on results of this kind and historical remarks. Another famous example is the proof by Klingler of superrigidity of low-dimensional representations of fundamental groups of fake projective planes, which he then used to deduce their arithmeticity \cite{Klingler}; see \cite{Klingler2} for a more general result along these lines.
\end{rem}

We now describe the general ideas behind the proof of Theorem~\ref{thm:SR}\eqref{thm:SRpun1}. The failure of $\PU(n,1)$ to be compatible with $\SU(n,1)$ can be measured precisely, and we describe this in \S\ref{ssec:Compatible}.
Specifically, we define an \emph{incompatibility datum} for a group $G$ with respect to an algebraic group $\bfH$ over a local field $k$ as a measure of the failure of compatibility. When $G$ is a real semisimple group, Proposition~\ref{prop:compk} shows that the only relevant case is when $k$ is $\bbR$ or $\bbC$ and Lemma~\ref{lem:Spar} proves that an incompatibility datum for the pair $(k, \bfH)$ is associated with a particular parabolic subgroup $Q$ of $\bfH(k)$.

When $G = \SU(n,1)$, $k = \bbR$, and $\bfH(k) = \PU(n,1)$, we use this general setup to produce a measurable self-map of the boundary of complex hyperbolic space having certain equivariance properties. A delicate fiber product argument translates the original map into an incidence geometry problem about \emph{chains} on the boundary of complex hyperbolic space. This problem was solved by Pozzetti \cite[Thm.\ 1.6]{Pozzetti} (see Theorem~\ref{thm:Pozzetti} for a statement in our language), and this allows us to complete the proof of Theorem~\ref{thm:SR}\eqref{thm:SRpun1}. More generally, the tools developed in \S\ref{ssec:Compatible} can be used to turn some superrigidity questions into problems about incidence geometry on boundaries. This is reminiscent of both Mostow's proof of strong rigidity in higher rank and the proof by Margulis and Mohammadi of the analogue of Theorem \ref{thm:main} for compact hyperbolic $3$-manifolds \cite{MM} (see Remark \ref{rem:SOremark}).

\medskip

The basic strategy for proving Theorem~\ref{thm:main} is to employ the general outline for Margulis's proof of arithmeticity of higher rank lattices \cite{MargulisSuperrigidity, MargulisBook}. This is the same strategy used in \cite{BFMS} for the real hyperbolic case; see \S\ref{ssec:arithpfsetup} for discussion. However, we need more than Theorem~\ref{thm:SR} to prove Theorem~\ref{thm:main} and we now describe the additional input we require.

Suppose that $M$ is a finite volume complex hyperbolic $n$-manifold, $n \ge 2$, and that $\Gam < \SU(n,1)$ is a lattice so that $M$ is biholomorphic to $\Gam \bs \bbB^n$. Canonically associated with $\Gam$ is its \emph{adjoint trace field} $\ell$, and the Zariski closure of $\Gam$ under the adjoint representation is an adjoint simple $\ell$-algebraic group $\conj{\bfG}$. The projection $\Gam \to \PU(n,1)$ is determined by a real place $v_0$ of $\ell$ so that $\conj{\bfG}(\ell_{v_0}) \cong \PU(n,1)$, where $\ell_v$ denotes the completion of $\ell$ at a place $v$. Then $\Gam$ is arithmetic if and only if $\Gam$ is precompact in $\conj{\bfG}(\ell_v)$ for all places $v \neq v_0$ of $\ell$.

In proving the analogue of Theorem~\ref{thm:main} in the real hyperbolic setting, the pairs $\ell_v$ and $\conj{\bfG}$ one encounters are always compatible with $\SO_0(n, 1)$ and so the analogue of Theorem~\ref{thm:SR}\eqref{thm:SRcomp} is all we needed. Certain pairs of interest to us in the proof of Theorem~\ref{thm:main}, particularly $(\bbR, \PU(r,s))$ with $2 \le s \le r < n$ and $r+s = n+1$, are compatible with $\SU(n,1)$. Theorem~\ref{thm:SR}\eqref{thm:SRpun1} takes care of the additional case where $\ell_v = \bbR$ and $\conj{\bfG}(\ell_v) \cong \PU(n,1)$. However, the above is not sufficient to handle the pair $(\bbC, \SL_{n+1}(\bbC))$.

To finish the proof of Theorem~\ref{thm:main}, we apply Simpson's celebrated results on linear representations of K\"ahler groups \cite{Simpson} and their generalization to the quasi-projective case. These methods actually lead to a more general theorem about the possibilities for the adjoint trace field $\ell$ of a lattice $\Gam$ in $\SU(n,1)$ and the associated $\ell$-algebraic group, namely the $\ell$-Zariski closure of $\Gam$ under the adjoint representation (see \S\ref{sec:Hodge} for the precise definition). In \S\ref{sec:Hodge} we will prove the following, which is known to experts but is not in the literature.

\begin{thm}[Hodge type and integral] \label{thm:SRht}
Let $\Gam < \SU(n,1)$ be a lattice, $\ell$ be its adjoint trace field, and $\bfG$ be the absolutely almost simple simply connected $\ell$-algebraic group canonically associated with $\Gam$. Then:
\begin{enumerate}

\item $\ell$ is totally real and the quadratic extension $\ell^\prime / \ell$ associated with $\bfG$ as a group of type ${}^{2}\mathrm{A}_n$ is totally complex;

\item for each real place $v$ of $\ell$, $\bfG(\ell \otimes_v \bbR)$ is isomorphic to $\SU(r_v,s_v)$ for some $r_v, s_v \ge 0$ with $r_v+s_v = n+1$;

\item $\Gam$ is integral, i.e., there is an action of $\bfG(\ell)$ on an $\ell$-vector space $V$ and an $\calO_\ell$-lattice $\calL \subset V$ so that a finite index subgroup of $\Gam$ stabilizes $\calL$.
\end{enumerate}
In particular, if $v_0$ is the place of $\ell$ associated with the lattice embedding of $\Gam$ in $\SU(n,1)$, then $\Gam$ is arithmetic if and only if $\bfG(\ell_v) \cong \SU(n+1)$ for all archimedean places $v \neq v_0$ of $\ell$.
\end{thm}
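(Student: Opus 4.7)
The plan is to apply non-abelian Hodge theory, in the form developed by Corlette--Simpson and extended to the quasi-projective setting by Jost--Zuo and Mochizuki, to the Galois conjugates of the tautological $\ell$-linear embedding of $\Gamma$. The finite-volume complex hyperbolic manifold $M$ is K\"ahler, and a Baily--Borel or smooth toroidal compactification realizes $\Gamma = \pi_1(M)$ as the fundamental group of a smooth quasi-projective variety, so Simpson's theory applies.

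\medskip

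First I would construct the family of Galois conjugate representations. By definition of the adjoint trace field, $\Gamma$ sits inside $\conj{\bfG}(\ell)$; after replacing $\Gamma$ by a finite-index subgroup so the lift to $\bfG(\ell)$ exists and fixing a faithful $\ell$-linear representation $\bfG \hookrightarrow \GL(V)$, we obtain an $\ell$-linear representation $\rho_0 : \Gamma \to \GL(V)$. For each embedding $\sigma : \ell \hookrightarrow \bbC$, post-composition yields a complex representation $\rho_\sigma : \Gamma \to \bfG^\sigma(\bbC)$ that is semisimple and Zariski-dense in $\bfG^\sigma$. Infinitesimal rigidity of each $\rho_\sigma$ reduces, upon tensoring by $\bbC$ and applying Galois conjugation, to the vanishing of $H^1(\Gamma, \mathfrak{sl}_{n+1}(\bbC))$, which is Weil's local rigidity theorem for $\SU(n,1)$-lattices with $n \ge 2$.

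\medskip

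With rigidity in hand, Simpson's theorem gives that each $\rho_\sigma$ underlies a polarized complex variation of Hodge structure on $M$. The existence of such a CVHS with monodromy Zariski-dense in $\bfG^\sigma(\bbC)$ forces the real form $\bfG(\ell_v)$ to be a \emph{group of Hodge type}. For a real place $v$ of $\ell$, the only such real forms of a group of type ${}^{2}\mathrm{A}_n$ are the $\SU(r_v, s_v)$ with $r_v + s_v = n + 1$, which yields (2) and forces the quadratic extension $\ell'/\ell$ to be complex at every real place of $\ell$. A complex place $v$ would give $\bfG(\ell_v) \cong \SL_{n+1}(\bbC)$ as a real Lie group; classifying Hodge structures compatible with the outer involution defining ${}^{2}\mathrm{A}_n$ rules this out (the outer form becomes inner over $\bbC$, incompatible with a globally outer $\bfG$ carrying a polarized CVHS), so $\ell$ admits no complex places, proving (1).

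\medskip

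For (3), I would invoke the integrality theorem of Esnault--Groechenig (the $p$-adic companion to Simpson's theory): a cohomologically rigid complex local system on a smooth quasi-projective variety with quasi-unipotent monodromy at infinity is integral. Cohomological rigidity strengthens the infinitesimal rigidity above by the same cohomological calculation, and the cusp monodromy of $\Gamma < \SU(n,1)$ is unipotent by the standard classification of rank-one cusps. Combined, these produce an $\calO_\ell$-lattice $\calL \subset V$ stabilized by a finite-index subgroup of $\Gamma$. The concluding arithmeticity criterion is then a routine application of Borel--Harish-Chandra: given (1)--(3), the $\ell$-group $\bfG$ with its $\calO_\ell$-structure defines an arithmetic lattice commensurable with $\Gamma$ precisely when $\bfG(\ell_v) \cong \SU(n+1)$ for all archimedean $v \neq v_0$. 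The main technical obstacle will be the careful invocation of Simpson--Mochizuki theory in the noncocompact case and the classification step ruling out complex places; both are standard in principle but require attention to cusp behavior and type-$A$ bookkeeping.
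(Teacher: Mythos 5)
Your proposal follows essentially the same route as the paper: local rigidity of the representations at all places (reduced to vanishing of $H^1$ with Lie algebra coefficients at the lattice place), Simpson's Hodge-type theorem together with Mochizuki's quasi-projective extension after passing to a finite-index subgroup with a smooth toroidal compactification, the classification of Hodge-type real forms of type $\mathrm{A}_n$ to obtain (1) and (2), and Esnault--Groechenig for integrality, with the arithmeticity criterion falling out by definition. The only wrinkle is your parenthetical reason for excluding complex places: the correct (and the paper's) argument is simply that $\SL_{n+1}(\bbC)$ viewed as a real group is not of Hodge type, not an inner/outer-form incompatibility.
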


The first two statements and their proofs were described to us by Domingo Toledo, and the third follows from work of Esnault--Groechenig \cite{EsnaultGroechenig}. To prove Theorem~\ref{thm:main}, we only need parts (1) and (2) of Theorem~\ref{thm:SRht}, and in the cocompact case these are immediate from Simpson's result that rigid representations of K\"ahler groups are of Hodge type \cite[Lem.\ 4.5]{Simpson}. There is in fact considerable overlap between the cases in Theorem~\ref{thm:main} covered by Theorems~\ref{thm:SR}\eqref{thm:SRcomp} and~\ref{thm:SRht}. Theorem~\ref{thm:SRht}(3) rules out the cases where $\ell_v$ is nonarchimedean, but these cases are perhaps more easily handled by Theorem~\ref{thm:SR}\eqref{thm:SRcomp}, since any simple algebraic group over a nonarchimedean local field is compatible with $G$ by Proposition~\ref{prop:compk}.

We now describe some other applications and interpretations of our results in the language of algebraic and complex geometry.

\medskip

One application of Theorem~\ref{thm:main} is to provide evidence for a conjecture of Klingler \cite[Conj.\ 1.12]{KlinglerSurvey}. Let $M = \bbB^n / \Gam$ be a finite volume complex hyperbolic manifold of complex dimension $n \geq 2$. As discussed above, there exists a totally real number field with fixed embedding $v_0: \ell \to \R$ and an almost simple $\ell$-algebraic group $\bfG$ with $\bfG(\ell \otimes_{v_0} \R) \cong \SU(n,1)$ such that, up to passing to a subgroup of finite index, $\Gamma \subseteq \bfG(\mathcal{O}_{\ell})$, where $\mathcal{O}_{\ell}$ denotes the ring of integers of $\ell$. Let $\bfH$ denote the Weil restriction of scalars from $\ell$ to $\Q$ of $\bfG$, which is a semisimple $\Q$-group with $ \Gamma \subseteq \bfH(\Z)$. Any faithful representation $\rho: \bfH \rightarrow \GL(V)$ defined over $\Z$ induces a polarizable $\Z$-variation of Hodge structure $\mathbb{V}$ on $M$.

As soon as $\Gamma$ is nonarithmetic, the group $\bfH(\ell \otimes_\bbQ \R)$ admits at least two noncompact factors and one easily checks that any totally geodesic subvariety of $M$ is atypical for $(M, \mathbb{V})$ in the sense of \cite[Def.\ 1.7]{KlinglerSurvey}. One can in fact check that maximal totally geodesic subvarieties of $M$ are optimal in the sense of \cite[Def.\ 1.8]{KlinglerSurvey}. Then \cite[Conj.\ 1.12]{KlinglerSurvey} implies that $M$ contains at most finitely many maximal totally geodesic subvarieties. Therefore, our results confirm this consequence of \cite[Conj.\ 1.12]{KlinglerSurvey}. See \cite{BaldiUllmo} for more about these connections.

\medskip

An application in another direction is to Margulis's question as to whether arithmeticity can be detected at the topological level. We explain in \S\ref{ssec:AG} how Theorem~\ref{thm:main} can be interpreted as saying that arithmeticity is detected by the topology of the complex variety underlying a complex hyperbolic manifold. In fact, arithmeticity is recognized by the structure of the intersection product on cohomology. For simplicity, we state here our result for complex dimension $2$ and refer to \S\ref{ssec:AG} for more on the higher dimensional case.

Suppose $X$ is a smooth complex projective variety with canonical divisor $K_X$ and $D \subset X$ is a (possibly empty) smooth divisor such that  $M = X \ssm D$ is a complex hyperbolic $2$-manifold for which $X$ is a smooth toroidal compactification of $M$. We recall that any complex projective curve $C$ on $X$ satisfies
\begin{equation}\label{eq:Proportionality}
3\, C \cdot C + 3 \deg(D \cap C) \ge - K_X \cdot C + 2\, D \cdot C,
\end{equation}
with respect to the intersection pairing on $H^2(X)$, and equality holds if and only if $C$ is totally geodesic. Note that $D$ is empty if and only if $M$ is compact, where Equation \eqref{eq:Proportionality} reduces to
\[
3\, C \cdot C \ge - K_X \cdot C.
\]
We also note that every complex hyperbolic manifold admits a finite covering with a compactification as assumed in Theorem~\ref{thm:mainAG}; see the proof of Theorem~\ref{thm:SRht} for a precise discussion. Therefore, up to finite covers there is no loss of generality in making the assumptions we make in the following result.

\begin{thm}[Arithmeticity and the intersection pairing]\label{thm:mainAG}
Suppose that $X$ is the smooth toroidal compactification of a complex hyperbolic $2$-manifold with (possibly empty) compactification divisor $D$. If $X$ contains infinitely many complex projective curves $C$ where equality holds in \eqref{eq:Proportionality} then $M$ is arithmetic. Equivalently, if $M$ is nonarithmetic, then there are only finitely many curves $C$ on $X$ for which equality holds in \eqref{eq:Proportionality}.
\end{thm}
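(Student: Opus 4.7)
The strategy is to reduce Theorem~\ref{thm:mainAG} to Theorem~\ref{thm:main} by identifying the projective curves $C \subset X$ for which equality holds in \eqref{eq:Proportionality} with the closures in $X$ of properly immersed totally geodesic complex curves in $M$. Granting this for the moment, observe that the totally geodesic submanifolds of $\bbH^2_\bbC$ are (real) geodesics, complex geodesic lines of real dimension $2$, totally real Lagrangian planes of real dimension $2$, and the whole ball, so every complex geodesic line in $M$ is automatically maximal among proper totally geodesic submanifolds. Thus infinitely many equality curves on $X$ yield infinitely many distinct maximal properly immersed totally geodesic submanifolds of $M$ of dimension at least $2$, and Theorem~\ref{thm:main} delivers arithmeticity of $M$.

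To prove the characterization, we rewrite \eqref{eq:Proportionality} as
\[
3\, C \cdot C + (K_X + D) \cdot C \;\ge\; 2\, D \cdot C - 3\deg(D \cap C),
\]
pull back the logarithmic cotangent sequence of $(X, D)$ to the normalization $\nu \colon \tilde C \to C$, and combine log adjunction with the Hirzebruch--Mumford proportionality principle for the ball quotient. The latter asserts that the log Chern forms of $\Omega^1_X(\log D)$ associated with the complete Kähler--Einstein Bergman metric on $M$ satisfy $3 c_2 = c_1^2$, and restricting these forms to $\tilde C$ identifies the difference between the two sides of \eqref{eq:Proportionality} with a positive multiple of the integral over $\tilde C$ of the pointwise squared norm of the second fundamental form of $C \cap M$ in $M$. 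This quantity is nonnegative and vanishes identically precisely when $C \cap M$ is totally geodesic in $M$, which gives the desired equivalence.

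The main technical subtleties concern singularities of $C$ and the behavior of $C$ near the toroidal boundary $D$. Passing to $\tilde C$ handles the former, with the discrepancy between $\deg(D \cap C)$ (counting preimages of $D$ in $\tilde C$) and the intersection number $D \cdot C$ on $X$ appearing as the correction term in \eqref{eq:Proportionality}. Near $D$, the Bergman metric on $M$ acquires standard Poincaré-type singularities compatible with the log structure, and the closure in $X$ of a totally geodesic complex curve in $M$ that limits into a cusp meets $D$ transversally in the smooth toroidal compactification; both facts are well documented in the theory of toroidal compactifications of ball quotients. The log proportionality inequality used here is classical, going back to Hirzebruch and Höfer in the cocompact case, and it has been extended to the quasi-projective toroidal setting in more recent literature that we shall invoke in \S\ref{ssec:AG}. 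The principal point requiring care is the sharp form of the equality statement in the presence of $D$, so as to match precisely the coefficients appearing in \eqref{eq:Proportionality}.
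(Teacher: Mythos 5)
Your proposal is correct and follows essentially the same route as the paper: the equality case of \eqref{eq:Proportionality} is identified with totally geodesic complex curves via Hirzebruch--H\"ofer relative proportionality in its logarithmic/toroidal form (the paper cites \cite[\S B.3]{HirzebruchHofer} and \cite[Thm.\ 0.1]{MSVZ} rather than re-deriving it from the second fundamental form as you sketch), such curves are maximal in complex dimension two, and Theorem~\ref{thm:main} then gives arithmeticity.
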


\medskip

Another application of Theorem~\ref{thm:SR} is to ruling out existence of certain mappings between complex hyperbolic manifolds. For example, Siu asked whether or not there are surjective holomorphic mappings $M_1 \to M_2$ between ball quotients with $2 \le \dim_\C(M_2) < \dim_\C(M_1)$ \cite{SiuConj}. The only progress thus far is by Koziarz and Mok, who ruled out the case of holomorphic submersions \cite{KoziarzMok}. Using Theorem~\ref{thm:SR} we prove the following in \S\ref{ssec:Siu}.

\begin{thm}[Nonexistence of certain maps]\label{thm:Siu}
Let $M$ be a finite volume complex hyperbolic manifold of complex dimension $n \ge 2$ containing a family $\{Z_i\}$ of geodesic submanifolds, all of real dimension at least two, that are equidistributed in the sense of Proposition~\ref{prop:BFMS3.1}. Suppose that $N$ is a finite volume complex hyperbolic manifold $N$ with $\dim_\bbC(N) = d \le n$ and $f : M \to N$ is a continuous map such that $f(Z_i)$ is contained in a proper geodesic submanifold of $N$ for all $i$ and
\[
f_*(\pi_1(M)) \le \pi_1(N) < \PU(m,1)
\]
is Zariski dense. Then $d = n$ and $f$ is homotopic to a finite cover.
\end{thm}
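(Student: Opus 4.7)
The plan is to apply the superrigidity Theorem~\ref{thm:SR} to the representation $\rho \bdef f_* \co \Gam \to \Lam \su \PU(d,1)$, where $\Gam = \pi_1(M)$ and $\Lam = \pi_1(N)$, viewing $\bfH \bdef \PU(d,1)$ as an adjoint $\bbR$-algebraic group. Because $\rho(\Gam)$ is Zariski dense in $\bfH(\bbR)$ and every bounded subgroup of the lattice $\Lam$ is finite while Zariski density forbids finite image, $\rho(\Gam)$ is automatically unbounded. A nontrivial irreducible summand $V$ of the adjoint representation of $\bfH(\bbR)$ furnishes a faithful irreducible $\bbR$-representation of dimension at least two, as required to apply Theorem~\ref{thm:SR}.

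The central technical step will be to construct the $W$-invariant ergodic measure $\nu$ on $(G \times \bbP(V))/\Gam$ projecting to Haar on $G/\Gam$, where $G = \SU(n,1)$. Since there are only finitely many conjugacy classes of connected noncompact almost simple subgroups of $G$ that can stabilize a proper totally geodesic subspace of $\bbH^n_\bbC$ of real dimension at least two, I may pass to a subsequence so that each $Z_i$ is the image in $M$ of a $W_i$-orbit in $G/\Gam$ with $W_i = g_i W g_i^{-1}$ for a single fixed such $W < G$. Lifting $f$ to a $\rho$-equivariant map $\tilde f \co \bbH^n_\bbC \to \bbH^d_\bbC$ and using that $\tilde f$ sends each lift of $Z_i$ into a proper totally geodesic subspace $Y_i \su \bbH^d_\bbC$ produces a $\rho(\Gam \cap W_i)$-fixed point in a flag variety of $\bfH$ attached to $Y_i$; composing with an $\bfH$-equivariant embedding of that flag variety into $\bbP(V)$ yields a $W_i$-invariant lift $\tilde\mu_i$ of the natural probability measure on $Z_i$ to a measure on $(G \times \bbP(V))/\Gam$. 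Translating by $g_i^{-1}$ converts these into $W$-invariant measures, and the equidistribution hypothesis combined with weak-$*$ compactness and an ergodic decomposition then produces the desired $\nu$.

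Theorem~\ref{thm:SR} now provides a continuous extension $\bar\rho \co G \to \bfH(\bbR)$: part~\eqref{thm:SRpun1} applies directly when $d = n$, and for $d < n$ I will verify that $(\bbR, \bfH)$ is compatible with $\SU(n,1)$ in the sense of \S\ref{ssec:Compatible} and apply part~\eqref{thm:SRcomp}. Simplicity of $G$ and Zariski density of $\rho$ force $\bar\rho$ to have discrete central kernel, and the dimension inequality
\[
\dim G = (n+1)^2 - 1 > (d+1)^2 - 1 = \dim \PU(d,1) \quad \text{for } d < n
\]
then rules out the case $d < n$, so $d = n$. In that case $\bar\rho$ is surjective and factors, up to an automorphism of $\PU(n,1)$ (possibly including complex conjugation), through the standard central isogeny $\SU(n,1) \to \PU(n,1)$. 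Hence $\rho(\Gam) = \bar\rho(\Gam)$ is itself a lattice in $\PU(n,1)$, and being contained in $\Lam$ it has finite index; asphericity of the $K(\pi,1)$ spaces $M$ and $N$ then exhibits $f$ as homotopic to the associated covering map $M \to N$ of degree $[\Lam : \rho(\Gam)]$.

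I expect the principal difficulty to be the measure construction, specifically arranging that after translation by $g_i^{-1}$ the measures $\tilde\mu_i$ admit a weak-$*$ limit whose ergodic components are genuinely supported on measurable graphs over $G/\Gam$ and still project to Haar; this mirrors the technical measure-theoretic core of \cite{BFMS}. A subordinate technical point is verifying compatibility of $(\bbR, \PU(d,1))$ with $\SU(n,1)$ when $d < n$, which is needed to invoke part~\eqref{thm:SRcomp} in that range but is not required in the crucial case $d = n$ where part~\eqref{thm:SRpun1} applies directly.
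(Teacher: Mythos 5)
Your proposal is correct and follows essentially the same route as the paper: build a $W$-invariant ergodic measure on $(G \times \bbP(V))/\Gam$ from the subgroups $\Del_i$ attached to the $Z_i$ (the paper does this by quoting Proposition~\ref{prop:setup}, taking $V$ to be an exterior power of the Lie algebra of $\bfH$ and using the point of $\bbP(V)$ determined by the Zariski closure of $\rho(\Del_i)$, rather than your fixed point in the space of geodesic subspaces of $\bbH^d_\bbC$ --- which, note, is a homogeneous space with reductive, not parabolic, stabilizer, so not literally a flag variety), then apply Theorem~\ref{thm:SR} via Proposition~\ref{prop:comppu} (compatibility for $d<n$, the $\PU(n,1)$ case for $d=n$), and conclude $d=n$, lattice image, finite index, and homotopy to a cover exactly as you do. The only deviations are these minor technical packaging choices, not the argument itself.
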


An example application of Theorem~\ref{thm:Siu} is to the maps between Deligne--Mostow orbifolds, described in detail in work of Toledo \cite[Thm.~1]{ToledoMaps}. There are surjective holomorphic maps $f : M \to N$ between complex hyperbolic $2$-orbifolds that are not homotopic to a finite cover where $M$ is arithmetic with a family $\{Z_i\}$ as in Theorem~\ref{thm:Siu}. The maps are additionally surjective on the level of orbifold fundamental groups, hence Theorem~\ref{thm:Siu} implies that $f_*(\pi_1(Z_i))$ must be Zariski dense in $\PU(2,1)$ for all but finitely many $i$. However, note that it is possible that some finite number of $Z_i$ are contracted by the map, hence $f_*(\pi_1(Z_i))$ is trivial for those $i$.

Our last application is the following. There are lattices $\Gam < \SL_2(\bbC)$ such that there are $\gam \in \Gam$ so that $\Tr(\Ad(\gam))$ is not an algebraic integer. See \cite[\S 5.2.2]{MaclachlanReid} and \cite{Cheseblo} for many examples. In \S\ref{ssec:NotTG} we will show that part $(3)$ of Theorem~\ref{thm:SRht} has the following consequence pointed out to us by Ian Agol.

\begin{thm}[Restricting geodesic submanifolds]\label{thm:notTG}
There are both closed and noncompact finite volume hyperbolic $3$-manifolds that are not isometric to an immersed totally geodesic submanifold of a complex hyperbolic $n$-manifold for any $n$.
\end{thm}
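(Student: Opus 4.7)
The plan is to use Theorem \ref{thm:SRht}(3), which forces every lattice in $\SU(n,1)$ to be integral, in combination with the classical fact that there exist finite-volume hyperbolic $3$-manifolds, both closed and cusped, whose fundamental groups do not have integral traces. A cusped family of such examples is given by certain two-bridge knot complements discussed in the book of Maclachlan and Reid on the arithmetic of hyperbolic $3$-manifolds, and closed non-integral examples can be produced from these by hyperbolic Dehn filling together with Thurston's continuity of trace under small perturbations.

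Suppose $N$ is a finite-volume hyperbolic $3$-manifold isometric to a properly immersed totally geodesic submanifold of a complex hyperbolic $n$-manifold $M = \bbB^n/\Gam$. The totally geodesic inclusion induces a $\pi_1$-injective embedding $\pi_1(N) \hookrightarrow \Gam < \PU(n,1)$ in which $\pi_1(N)$ lies in a conjugate of the standard real subgroup $\PO_0(3,1) < \PU(n,1)$. Applying Theorem \ref{thm:SRht}(3) to $\Gam$, a finite-index subgroup $\Gam^0 \le \Gam$ preserves an $\mathcal{O}_\ell$-lattice $\mathcal{L}$ in a faithful rational representation $V$ of the canonically associated $\ell$-algebraic group $\bfG$. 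The intersection $\Delta := \pi_1(N) \cap \Gam^0$ has finite index in $\pi_1(N)$ and also preserves $\mathcal{L}$.

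The next step is to translate this integrality in $V$ into integrality of traces in the standard $2$-dimensional representation of $\SL_2(\bbC)$. Lifting $\Delta$ along the double cover $\SL_2(\bbC) \to \PSL_2(\bbC) \cong \SO_0(3,1)^+$, every element $\delta \in \Delta$ has eigenvalues $\lambda, \lambda^{-1}$ in the standard representation, while its eigenvalues on $V$ are monomials $\lambda^a \bar{\lambda}^b$ for various integers $a,b$. Preservation of $\mathcal{L}$ forces each such monomial to lie in $\overline{\bbZ}$, and the faithfulness of $V$ on the image of $\SO_0(3,1)$ guarantees that some product of these monomials yields $\lambda^N \in \overline{\bbZ}$ for a positive integer $N$. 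Consequently $\lambda$ is itself an algebraic integer, and therefore so is $\tr(\delta) = \lambda + \lambda^{-1}$.

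Integrality of traces is a commensurability invariant for finitely generated subgroups of $\SL_2(\bbC)$: because $\tr(\gamma^n)$ is a monic polynomial in $\tr(\gamma)$ with integer coefficients, non-integrality of $\tr(\gamma)$ for some $\gamma \in \pi_1(N)$ would force non-integrality of $\tr(\gamma^n)$ for a power $\gamma^n \in \Delta$, contradicting the previous step. Hence $\pi_1(N)$ itself has integral traces, and Theorem \ref{thm:notTG} follows by choosing $N$ to be any of the non-integral examples cited above. The main obstacle is the transfer of integrality from the chosen faithful representation $V$ of $\bfG$ to the standard $2$-dimensional representation of $\SL_2(\bbC)$; this rests on the general principle that integrality of a finitely generated linear group is an intrinsic property, independent of the faithful rational representation used to witness it.
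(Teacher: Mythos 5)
Your strategy coincides with the paper's: pair the integrality statement of Theorem \ref{thm:SRht}(3) with the existence of finite volume hyperbolic $3$-manifolds having nonintegral traces, and run the contradiction through the totally geodesic inclusion $\SO_0(3,1) < \SU(n,1)$. The difference is only in how integrality is transferred. The paper works concretely: it fixes an element $\gam$ with $\Tr(\gam)\notin\overline{\bbZ}$, takes a diagonal representative, and computes $\Tr(\Ad_{\fraks\fraku(n,1)}(\gam))$ as a monic integral polynomial in $\lambda^2+\lambda^{-2}$ (for $n=3$, $(\lambda^2+\lambda^{-2})^2+4(\lambda^2+\lambda^{-2})+3$), concluding this adjoint trace is nonintegral, contradicting Theorem \ref{thm:SRht}(3). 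You argue in the contrapositive and abstractly: preservation of $\calL$ makes every eigenvalue monomial $\lambda^a\bar\lambda^b$ of $\delta$ on $V$ an algebraic integer, and a weight argument recovers $\lambda^N\in\overline{\bbZ}$. This is correct, and it has the virtue of tracking the genuine $\bar\lambda$-dependence of the eigenvalues of a loxodromic element of $\SO_0(3,1)$, which the paper's explicit diagonal form does not display. Two points should be made explicit: Theorem \ref{thm:SRht}(3) does not literally assert that $V$ is faithful (its kernel is central and finite, which suffices, since then the weights of $V$ on the relevant $2$-torus generate a finite index subgroup of its character lattice), and to hit the exponent vector $(N,0)$ by a product of eigenvalue monomials you should note either that $\delta^{-1}$ also preserves $\calL$ or that the weights of a representation of a semisimple group sum to zero, so that integer rather than merely nonnegative combinations are available. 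Your use of the fact that $\Tr(\gam^m)$ is a monic integral polynomial in $\Tr(\gam)$ to descend to the finite index subgroup is the same device the paper leaves implicit.

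The one genuine gap is your source of closed examples. Hyperbolic Dehn filling plus continuity of traces proves nothing about nonintegrality: algebraic integers are dense in $\bbC$, so a trace close to a nonintegral number may perfectly well be integral, and integrality of traces is sensitive to filling. Since the theorem claims both closed and cusped examples, that half of the statement is unsupported as written. The repair is the one the paper makes: cite a known closed hyperbolic $3$-manifold with a nonintegral trace \cite[\S 5.2.2]{MaclachlanReid} together with a cusped example such as the link complement of \cite{Cheseblo}. With that citation in place of the filling argument, your proof is complete and is essentially the paper's.
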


We now discuss the organization of this paper. Section \ref{sec:sun1} starts by discussing $\SU(n,1)$ and some of its subgroups. In \S\ref{ssec:Compatible} we introduce the notion of an incompatibility datum, define compatibility, and prove some general results. We also prove compatibility of $\SU(n,1)$ with certain algebraic groups over local fields and study the failure of $\PU(n,1)$ to be compatible with $\SU(n,1)$. In \S\ref{sec:map} we discuss existence of equivariant maps and algebraic representations. In \S\ref{sec:SRcomp} we make the necessary modifications to the arguments in \cite{BFMS} to prove Theorem~\ref{thm:SR}\eqref{thm:SRcomp}. We continue in \S\ref{sec:SRpun1} with the proof of our main technical result, Theorem~\ref{thm:SR}\eqref{thm:SRpun1}. In \S\ref{sec:Hodge}, we prove Theorem~\ref{thm:SRht} and give some additional algebraic setup for the proof of Theorem~\ref{thm:main}, which is contained in \S\ref{sec:mainpf}. Then \S\ref{sec:Final} contains examples that exhibit the possible behavior of geodesic submanifolds of arithmetic complex hyperbolic manifolds, the proofs Theorems \ref{thm:mainAG}, \ref{thm:Siu}, and \ref{thm:notTG}, and some final questions.

\subsubsection*{Acknowledgments}
The authors thank Domingo Toledo for describing how Simpson's work leads to Theorem~\ref{thm:SRht} and Olivier Biquard for correspondence about the nonuniform case. We thank Bruno Klingler for explaining the relationship to his conjecture discussed in the introduction, Ian Agol for pointing out Theorem \ref{thm:notTG} and Venkataramana, Nicolas Bergeron, and Emmanuel Ullmo for insightful conversations. We also thank Jean L\'ecureux and Beatrice Pozzetti for conversations related to the proof of Theorem~\ref{thm:SR}\eqref{thm:SRpun1}.
It is a pleasure to thank Baldi and Ullmo for having explained their approach to us and for various conversations around Hodge Theory. We are grateful to the referees for the constructive comments and recommendations which improved the quality of our exposition.

Bader was supported by the ISF Moked 713510 grant number 2919/19. Fisher was supported by NSF DMS-1906107. Miller was supported by  NSF DMS-2005438/2300370. Stover was supported by Grant Number 523197 from the Simons Foundation/SFARI and Grants DMS-1906088, DMS-2203555 from the National Science Foundation.

\section{Preliminaries on $\SU(n,1)$}\label{sec:sun1}

Throughout this section and the remainder of the paper we fix the group $G\cong \SU(n,1)$ with $n \ge 2$. We will consider $G$ as being the group of real points of a real algebraic group.

\subsection{The group $G$ and its standard subgroups.} \label{subsec:simple}

In this subsection we fix some notation and give convenient coordinates for certain subgroups of $G$ that will be important in what follows.

\medskip

We start by giving a convenient matrix representation for $G$. The group $\U(n,1)$ is often described as the automorphism group of the hermitian form
\[
h_0(x_1,\dots,x_{n+1}) = \sum_{i=1}^n |x_i|^2 - |x_{n+1}|^2,
\]
on $\bbC^{n+1}$.
Under the linear change of variables
\[
y_1=\frac{\sqrt{2}}{2}(x_1+x_{n+1}),~y_2=x_2,~\ldots, ~y_n=x_n,~y_{n+1}=\frac{\sqrt{2}}{2}(x_1-x_{n+1})
\]
this form becomes
\begin{equation} \label{eq:h}
h(y_1,\dots,y_{n+1}) = y_1\conj{y}_{n+1}+y_{n+1}\conj{y}_1+\sum_{i=2}^n  |y_i|^2.
\end{equation}
Hereafter, we will view $G$ as the subgroup of $\SL_{n+1}(\bbC)$ preserving $h$.

\begin{rem}
For $n=1$, the stabilizer of $h$ in $\SL_2(\mathbb{C})$ is the image of the homomorphism:
\begin{align*}
\SL_2(\mathbb{R}) &\to \SL_2(\mathbb{C}) \\
\begin{pmatrix}
a & b \\
c & d
\end{pmatrix}
&\mapsto
\begin{pmatrix}
a & ib \\
-ic & d
\end{pmatrix}
\end{align*}
This explicitly realizes the well-known isomorphism $\SU(1,1) \cong \SL_2(\mathbb{R})$. We will tacitly assume from here forward that $n\geq 2$.
\end{rem}

In what follows, let $e_i$ denote the $i^{th}$ standard basis vector in $\mathbb{C}^{n+1}$ given by $y_i = 1$ and $y_j=0$ for $j\neq i$. Note that the restriction of $h$ to the complex line spanned by the vector $e_1-e_{n+1}$ is negative definite. We denote by $K\le G$ the stabilizer in $G$ of this line. Then $K$ also stabilizes the $h$-orthogonal complement of this line, namely the complex hyperplane spanned by the vectors $e_2,\ldots,e_n$ and $e_1+e_{n+1}$, and note that $h$ is positive definite on this hyperplane. One sees easily that
\[
K\cong \mathrm{S}(\U(n) \times \U(1))\simeq \U(n),
\]
and that it is a maximal compact subgroup of $G$. In particular, every compact subgroup of $G$ is conjugate to a subgroup of $K$.

For $1 \le m \le n$ let $W^c_m\le G$ be the subgroup of $G$ fixing each of the $n-m$ standard basis vectors $e_{m+1},\ldots, e_n\in \mathbb{C}^{n+1}$, and note that $W^c_m\cong \SU(m,1)$. We also consider the subgroup
\[
W^c_m\cap \GL_{n+1}(\bbR) < \GL_{n+1}(\bbC),
\]
which is isomorphic to $\SO(m,1)$ and we denote its identity component by $W^r_m$, thus $W^r_m \cong \SO_0(m,1)$.

\begin{defn} \label{def:standard}
The subgroups $W^c_1,\ldots,W^c_n$ and $W^r_2,\ldots,W^r_n$ of $G$ defined above are said to be \emph{the standard almost simple subgroups}, or for short just \emph{the standard subgroups} of $G$.
\end{defn}

Identifying $K\bs G$ with complex hyperbolic $n$-space $\bbB^n$, the standard subgroups of $G$ have a special relationship with its totally geodesic subspaces. It is shown in \S3.1 of Goldman's book \cite{Goldman} (see in particular \S3.1.11) that every totally geodesic subspace of complex hyperbolic $n$-space with real dimension at least two is isometric to either real hyperbolic $m$-space for some $2 \le m \le n$ or complex hyperbolic $m$-space for some $1 \le m \le n$.

Moreover, the group $G$ acts transitively on the collection of totally geodesic subspaces of any given \emph{type}, where the type of a totally geodesic subspace describes whether it is isometric to real or complex hyperbolic space of a given fixed dimension. We note that a real hyperbolic $2$-plane has a different type from a complex hyperbolic line. Indeed, under the restriction of the complex hyperbolic metric the former has a constant sectional curvature $-1/4$ while the latter has curvature $-1$.

It is evident that for every standard subgroup $W\le G$, $W\cap K$ is a maximal compact subgroup of $W$ and that
\[
(W \cap K) \bs W \overset{\sim}{\longrightarrow} K \bs K W \subseteq K \bs G,
\]
is a totally geodesic embedding of a real or complex hyperbolic space of the corresponding type in $K\bs G$. The following proposition summarizes the above discussion.

\begin{prop}[{\cite[\S3.1]{Goldman}}] \label{prop:standard}
The totally geodesic subspaces of (real) dimension at least two in the symmetric space $K\bs G$ are exactly the subsets of the form $K\bs KWg$ for an element $g\in G$ and a standard subgroup $W\le G$ corresponding to the type of the given totally geodesic subspace.
\end{prop}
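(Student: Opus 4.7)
The plan is to combine the classification of totally geodesic subspaces of $\bbB^n$ with the transitivity of $G$ on subspaces of any fixed type.

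First, I would invoke the classification already cited from \cite[\S3.1]{Goldman}: every totally geodesic subspace of $K\bs G \cong \bbB^n$ of real dimension at least two is isometric either to a real hyperbolic $m$-space for some $2\le m\le n$ or to a complex hyperbolic $m$-space for some $1\le m\le n$, and no two of these are mutually isometric (for instance, the real hyperbolic $2$-plane and the complex hyperbolic line carry different curvatures). The standard subgroups are designed precisely to realize each such type: since $W^c_m\cong \SU(m,1)$ and $W^r_m\cong \SO_0(m,1)$ have the appropriate symmetric spaces as their associated spaces, and since the Cartan involution of $G$ associated with $K$ preserves each standard subgroup (as can be read off from the explicit matrix realization in \eqref{eq:h}), the induced maps $(W^c_m\cap K)\bs W^c_m \hookrightarrow K\bs G$ and $(W^r_m\cap K)\bs W^r_m \hookrightarrow K\bs G$ are totally geodesic embeddings of the stated types.

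Second, I would establish that $G$ acts transitively on the collection of totally geodesic subspaces of $K\bs G$ of any fixed type. Complex totally geodesic subspaces of complex dimension $m$ correspond bijectively to complex $(m+1)$-dimensional subspaces of $\bbC^{n+1}$ on which $h$ has signature $(m,1)$, while real totally geodesic subspaces of dimension $m$ correspond bijectively to totally real $(m+1)$-dimensional subspaces on which $h$ restricts to a real quadratic form of signature $(m,1)$. Witt's theorem applied to the hermitian form $h$ (respectively, its restriction to totally real subspaces) then gives a single $G$-orbit in each case.

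Third, with these two ingredients in hand the proposition is immediate. Given a totally geodesic subspace $X \subset K\bs G$ of a given type, let $W$ be the standard subgroup realizing that same type as in Step 1; Step 2 then supplies an element $g\in G$ with $K\bs KWg = X$. Conversely, any set of the form $K\bs KWg$ is a right translate of the totally geodesic subspace $K\bs KW$ and is therefore itself totally geodesic. The main point requiring care is the second step, namely making the dictionary between totally geodesic subspaces of $\bbB^n$ and the corresponding linear-algebraic configurations in $\bbC^{n+1}$ precise enough to transfer transitivity from Witt's theorem. Since all of this is worked out in \cite[\S3.1]{Goldman}, in practice the proof amounts to a brief appeal to those results together with an explicit identification of the standard subgroups $W^c_m$ and $W^r_m$ defined here with the stabilizers appearing in Goldman's discussion.
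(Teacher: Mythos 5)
Your proposal is correct and follows essentially the same route as the paper, which offers no independent proof but presents the proposition as a summary of Goldman's \S3.1: the classification of totally geodesic subspaces by type, transitivity of $G$ on subspaces of each fixed type, and the observation that each standard subgroup $W$ realizes its type via the totally geodesic embedding $(W\cap K)\bs W \hookrightarrow K\bs G$. Your added explication of transitivity via Witt's theorem is just an unpacking of the Goldman citation and does not change the argument.
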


Proposition~\ref{prop:standard} will play a prominent role in \S\ref{ssec:Equi},
along with Lemma~\ref{lem:AlignedToTG} below in which we will record some basic facts about standard subgroups. First we state an important corollary of Proposition~\ref{prop:standard} regarding the classification of almost simple subgroups of $G$, which follows from the fact that every noncompact, almost simple subgroup stabilizes a totally geodesic subspace constructed by inclusion of maximal compact subgroups.

\begin{prop} \label{prop:almostsimple}
The standard subgroups of $G$ are noncompact, connected, almost simple, closed subgroups generated by unipotent elements, and every noncompact, connected, almost simple, closed subgroup of $G$ generated by unipotent elements is conjugate to a unique standard subgroup of $G$.
\end{prop}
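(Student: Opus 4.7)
The first assertion is immediate from the construction: each $W^c_m \cong \SU(m,1)$ and each $W^r_m \cong \SO_0(m,1)$ (for $m \ge 2$, which is exactly when $\SO_0(m,1)$ is almost simple) is by definition a closed subgroup of $G$ that is manifestly connected, noncompact, and almost simple. The plan for the converse is to use Proposition~\ref{prop:standard} to translate the problem into the geometry of totally geodesic subspaces of $K\bs G \cong \bbB^n$, and then argue that a noncompact, connected, almost simple closed subgroup of $G$ is pinned down by its orbit through the basepoint.

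Let $W \le G$ be noncompact, connected, almost simple, and closed. First I would invoke the classical fact (due to Mostow, using Cartan's fixed point theorem on the nonpositively curved $K\bs G$) that after conjugating $W$ by an element of $G$, one may arrange $W \cap K$ to be a maximal compact subgroup of $W$. The $W$-orbit of $[K] \in K\bs G$ is then naturally isometric to the Riemannian symmetric space $(W\cap K)\bs W$ and embeds totally geodesically into $\bbB^n$. Because $W$ is almost simple and noncompact, this orbit has real dimension at least $2$, so Proposition~\ref{prop:standard} identifies it as $K\bs KW'g$ for a standard subgroup $W'$ and some $g \in G$. A further conjugation reduces to the case $g = e$, so that $W$ preserves the totally geodesic subspace $X := K\bs KW'$.

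The core of the argument is then to show $W = W'$. Both $W$ and $W'$ lie in the closed subgroup $S := \Stab_G(X)$, and the pointwise stabilizer $C$ of $X$ in $S$ is compact (since it acts unitarily on the $h$-orthogonal complement of the defining subspace of $X$). Every continuous homomorphism from a noncompact, connected, almost simple Lie group to a compact Lie group has image in a central, hence finite, subgroup, so the projection $W \to C$ is trivial and $W \le W'$. Using that $W\cap K$ is a maximal compact subgroup of $W$ contained in $K_{W'} := W' \cap K$, that the $W$-orbit on $X$ is all of $X$, and that $[\frakp_{W'},\frakp_{W'}] = \frakk_{W'}$ for the rank-one simple symmetric pair $(\mathfrak{w}', \frakk_{W'})$, a Cartan decomposition argument then forces $\mathfrak{w} = \mathfrak{w}'$ and hence $W = W'$ by connectedness. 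Uniqueness of $W'$ among the standard subgroups is automatic: distinct standard subgroups yield totally geodesic orbits of different dimension or different type (real versus complex), and by Proposition~\ref{prop:standard} these invariants are $G$-conjugation invariant.

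The step I expect to require the most care is the structural analysis of $S$ and the passage $W \le W'$ in the real hyperbolic case $W' = W^r_m$, where the interaction between the real totally geodesic subspace $X$ and the ambient complex hermitian form $h$ makes the compact centralizer slightly more delicate to identify than in the complex case. Beyond this bookkeeping, everything reduces to standard structural facts about semisimple subgroups of rank-one Lie groups.
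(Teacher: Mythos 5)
There is a genuine gap at the key reduction step. You arrange, via Cartan's fixed point theorem, that $W\cap K$ is a maximal compact subgroup of $W$, and then assert that the $W$-orbit of $[K]$ "embeds totally geodesically" into $K\bs G$. That implication is false in general: knowing that $W\cap K$ is maximal compact in $W$ does not imply that $W$ is stable under the Cartan involution attached to $K$, and it is $\theta$-stability (equivalently, self-adjointness after Mostow) that makes the orbit through the basepoint totally geodesic; without it the orbit is a copy of $(W\cap K)\bs W$ but need not be totally geodesic, nor carry the symmetric metric. This is exactly the content of the Karpelevich--Mostow theorem, which guarantees only that \emph{some} point has a totally geodesic $W$-orbit, and it is precisely what the paper invokes at this step (``for some $h\in G$ the $S$-orbit $K\bs KhS$ is totally geodesic'', citing \cite{Karpel,MostowDec}) before applying Proposition~\ref{prop:standard}. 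If the reduction were as cheap as you state, Karpelevich--Mostow would be an immediate corollary of Cartan's fixed point theorem, which it is not. The fix is simply to quote the theorem in its correct form (conjugate $W$ to be $\theta$-stable, or take the totally geodesic orbit at the point it provides); after that your use of Proposition~\ref{prop:standard} to write the orbit as $K\bs KW'g$ matches the paper.

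The endgame also needs more care, though it is repairable. Your ``projection $W\to C$'' onto the pointwise stabilizer presupposes an (almost) direct product decomposition of $\Stab_G(X)$ into $C$ and $W'$; what is available a priori is only the exact sequence $C \to \Stab_G(X) \to \Isom(X)$, so one must first establish the structure of the stabilizer (this is the paper's Lemma~\ref{lem:AlignedToTG}\eqref{lem:AlignedToTG2}, $N\subseteq KW$, which itself rests on part \eqref{lem:AlignedToTG1}). The paper avoids your Cartan-decomposition computation altogether: since $W$ acts transitively by isometries on the symmetric space $X$ and is connected and semisimple, its image in $\Isom(X)$ is the full identity component (É.~Cartan), so $W$ and the standard subgroup $W'$ both map onto $\Isom(X)^\circ$ with finite kernel, forcing $W=W'$ by connectedness. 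If you keep your route, note that once you have $W\le W'$ together with transitivity on $X$, the same dimension count finishes immediately, whereas the sketched argument via $[\frakp_{W'},\frakp_{W'}]=\frakk_{W'}$ again tacitly assumes $\frakw$ is compatible with the Cartan decomposition of $\frakw'$, i.e.\ the $\theta$-stability you have not yet secured.
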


\begin{defn} \label{def:type}
Given a noncompact, connected, almost simple, closed subgroup of $G$ generated by unipotent elements, we say that its \emph{type} is the type of the unique standard subgroup of $G$ to which it is conjugate.
\end{defn}

\begin{rem} \label{rem:almostsimple}
The analogue of Proposition~\ref{prop:almostsimple} for $\SO_0(n,1)$ holds as well: every noncompact, connected, almost simple, closed subgroup of $\SO_0(n,1)$ is conjugate to a unique standard subgroup of the form $\SO_0(m,1)$ for some $2\leq m \leq n$. The proof of this fact is similar to the proof of Proposition~\ref{prop:almostsimple} to be presented below,
only it is easier and well-known, so we will omit it.
\end{rem}

The proof of Proposition~\ref{prop:almostsimple} will be derived simultaneously with the proof of Lemma~\ref{lem:AlignedToTG} given below. For a subgroup $S\le G$, we use the notation $S^+$ to denote the closed subgroup of $G$ generated by all the one-dimensional unipotent subgroups of $S$. We note that $S^+$ is necessarily connected, and it is either a unipotent subgroup or a noncompact, almost simple subgroup of $G$. This follows from the fact that $G$ has rank 1.
Indeed, if $S^+$ is contained in a parabolic subgroup $P$ then it is contained in $P^+$, which is the unipotent radical of $P$, hence $S^+$ is unipotent. Otherwise, $S^+$ has a trivial unipotent radical, hence it is semisimple, and since it has no compact factor, it follows that it is almost simple and noncompact.

\begin{lemma}\label{lem:AlignedToTG}
Fix a standard subgroup $W\le G$ and let $N\le G$ be its normalizer. Then the following results hold.
\begin{enumerate}

\item
If $S\le G$ is a connected, almost simple, closed subgroup generated by unipotent elements that preserves $K \bs K W$ and acts transitively on it, then $S=W$.
\label{lem:AlignedToTG1}

\item
The stabilizer in $G$ of the totally geodesic subspace $K \bs K W$ is $N$. Moreover, $N\subseteq KW$ thus $N/W$ is compact and $N^+=W$.
\label{lem:AlignedToTG2}

\item Assume that $S \le G$ is a closed intermediate subgroup $W \le S\le N$. Then $S^+=W$ and $K\bs KW=K\bs K S=K\bs K N$.
\label{lem:AlignedToTG4}

\item Let $S \le G$ be a closed subgroup containing $W$. Then there exists $k\in K$ and a standard subgroup $W_0 \le G$ such that $kS^+k^{-1}=W_0$ and
\[
K\bs KS=K\bs KS^+=K\bs KW_0k.
\]
Moreover, $K \bs KS$ is a totally geodesic subspace of $K\bs G$ whose volume measure coincides, up to normalization, with the push-forward of Haar measure on  $S$ to $(S\cap K)\bs S\simeq K\bs KS$.
\label{lem:AlignedToTG3}

\end{enumerate}
\end{lemma}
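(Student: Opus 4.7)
The plan is to prove the four parts in the order (2), (3), (4), (1), with Proposition \ref{prop:almostsimple} proved in tandem; it is needed in (4) applied to standard subgroups of smaller dimension, so a dimensional induction suffices. For part (2), I would first compute $N_G(W)$ explicitly from the matrix description in \S\ref{subsec:simple}. For $W = W^c_m$, the group $W$ acts nontrivially on the $h$-nondegenerate $(m+1)$-dimensional subspace $V = \mathrm{span}(e_1, \ldots, e_m, e_{n+1})$ and fixes its $h$-orthogonal complement $V^\perp$ pointwise. The normalizer $N$ must preserve this decomposition, so $N \cong \mathrm{S}(\U(m,1) \times \U(n-m))$. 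Writing a general element in the form $(\zeta I_V \cdot w, g_2)$ with $\zeta \in \U(1)$, $w \in W$, and $g_2 \in \U(n-m)$, the block $(\zeta I_V, g_2)$ lies in $K$ because it acts as the scalar $\zeta$ on the negative-definite line $\mathrm{span}(e_1 - e_{n+1}) \subseteq V$. Hence $N \subseteq KW$, which immediately yields $N^+ = W$ and the compactness of $N/W$; the case $W = W^r_m$ is analogous. For the equality $\Stab_G(K \bs KW) = N$, the inclusion $N \subseteq \Stab_G$ follows from $N \subseteq KW$; conversely, any stabilizer element maps the base point $Ke$ to some $Kw$ and so is of the form $kw$ with $k \in K$. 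The element $k$ then acts on $K \bs KW \cong \bbB^m$ as a holomorphic isometry fixing the base point, hence agrees on the subspace with some $k' \in K \cap W \subseteq N$ modulo the pointwise stabilizer of $K \bs KW$ in $G$; this pointwise stabilizer is a copy of $\U(n-m)$ sitting inside $N$, so $k \in N$ and thus $g \in N$.

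For part (3), let $W \le S \le N$. Since $N/W$ is compact, so is $S/W$; each one-parameter unipotent subgroup of $S$ projects trivially to $S/W$, so $S^+ \subseteq W$, and $W = W^+ \subseteq S^+$ gives $S^+ = W$. The orbit equalities $K \bs KW = K \bs KS = K \bs KN$ are immediate from $W \le S \le N \subseteq KW$. For part (4), the paragraph preceding the lemma shows $S^+$ is either unipotent or noncompact almost simple, and containing $W$ forces the latter. By Proposition \ref{prop:almostsimple}, $S^+$ is conjugate to a standard subgroup, so by Proposition \ref{prop:standard} its orbit $K \bs KS^+$ is a totally geodesic subspace through the base point, of the form $K \bs KW_0 k$ for some standard $W_0$ and $k \in K$. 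Since $S^+$ stabilizes this subspace, $kS^+k^{-1}$ stabilizes $K \bs KW_0$, so by part (2) applied to $W_0$ we get $kS^+k^{-1} \subseteq N_G(W_0)$, and taking plus-parts yields $kS^+k^{-1} \subseteq N_G(W_0)^+ = W_0$. Conjugating the transitive action of $S^+$ by $k$ shows $kS^+k^{-1}$ acts transitively on $K \bs KW_0$; applying Proposition \ref{prop:almostsimple} inductively within $W_0$ forces $kS^+k^{-1} = W_0$, since a proper standard subgroup would have a strictly smaller orbit. The equality $K \bs KS = K \bs KS^+$ then follows from $S \le N_G(S^+) = k^{-1} N_G(W_0) k \subseteq k^{-1} K W_0 k = KS^+$ by part (2) for $W_0$; the measure statement is standard for a transitive isometric action with compact isotropy.

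For part (1), suppose $S$ is connected, almost simple, closed, preserves $K \bs KW$, and acts transitively. By part (2), $S \subseteq N$. Being connected, almost simple, and noncompact (as it acts transitively on the noncompact space $K \bs KW$), we have $S = S^+ \subseteq N^+ = W$. Transitivity gives $KS = KW$, equivalently $W = (K \cap W) \cdot S$, and arguing as in part (4) within the almost simple rank-one group $W$ forces $S = W$. The main obstacle is the rigorous proof of part (2), particularly the inclusion $\Stab_G(K \bs KW) \subseteq N$: this requires identifying the image in $\Isom(K \bs KW)$ of the isotropy of the base point with the image of $K \cap N$, together with an explicit description of the pointwise stabilizer of $K \bs KW$ as the compact $\U(n-m)$-factor in $N$.
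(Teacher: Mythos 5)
Your plan has a genuine circularity at its core. Part \eqref{lem:AlignedToTG3} of your argument invokes Proposition~\ref{prop:almostsimple} for $G$ itself (to conjugate $S^+$ to a standard subgroup), and your part \eqref{lem:AlignedToTG1} is reduced to ``arguing as in part (4) within $W$,'' i.e.\ to Proposition~\ref{prop:almostsimple} for $W$. But the only available proof of Proposition~\ref{prop:almostsimple} (for $G$ or for any standard $W$) runs through Karpelevich--Mostow plus precisely statement \eqref{lem:AlignedToTG1}: after Karpelevich--Mostow one gets a conjugate of $S$ preserving and acting transitively on some $K\bs KW_0$, and one must then know that a connected almost simple group acting transitively on that symmetric space equals $W_0$ --- including the case $W_0=W^c_n=G$, where the orbit is everything. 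A dimensional induction does not break this circle, because at every level the transitive-on-the-whole-space case of \eqref{lem:AlignedToTG1} for $W$ requires the classification of almost simple subgroups of $W$, and vice versa. The paper escapes exactly here by a direct argument you never supply: since $S$ is a connected semisimple group of isometries acting transitively on the symmetric space $Z=(W\cap K)\bs W$, its image in $\Isom(Z)$ is the full identity component (\'E.\ Cartan), and comparing with $N_0^+$, which has the same property, plus almost simplicity (finite kernels) gives $S=N_0^+=W$. Without this (or some substitute, e.g.\ a classification of transitive groups on real/complex hyperbolic spaces), parts \eqref{lem:AlignedToTG1}, \eqref{lem:AlignedToTG3} and Proposition~\ref{prop:almostsimple} all rest on each other.

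There is a second concrete gap inside your part \eqref{lem:AlignedToTG3}: from ``$S^+$ is conjugate to a standard subgroup'' you conclude ``by Proposition~\ref{prop:standard} its orbit $K\bs KS^+$ is a totally geodesic subspace through the base point, of the form $K\bs KW_0k$ with $k\in K$.'' Proposition~\ref{prop:standard} classifies totally geodesic subspaces; it does not say that the orbit of a \emph{conjugate} $g^{-1}W_0g$ through the base point is totally geodesic, and in general it is not (the orbit of a point off the invariant subspace of a standard subgroup is an equidistant-type hypersurface). The whole content of the paper's proof of this part is to upgrade the conjugator: using $W\le S^+$ one gets $W^g\le W_0$, shows $W^g$ is conjugate \emph{within} $W_0$ to a standard subgroup of $W_0$ of the same type, which must be $W$ itself, so that $hg\in N\subseteq KW$ and the conjugator can be replaced by an element $k\in K$ --- only then is the orbit identified with $K\bs KW_0k$. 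Your proposal asserts the conclusion of this step rather than proving it. By contrast, your explicit computation of $N=\mathrm{S}(\U(m,1)\times\U(n-m))$ in part \eqref{lem:AlignedToTG2} and the inclusion $N\subseteq KW$ is a legitimate (and more concrete) alternative to the paper's uniqueness-of-invariant-subspace argument, though two small points there need tightening: $N^+=W$ does not follow ``immediately'' from $N\subseteq KW$ but from the matrix description (a unipotent element of $N$ has trivial compact-block components), and in part \eqref{lem:AlignedToTG4} the claim that a unipotent one-parameter subgroup ``projects trivially to $S/W$'' should simply be replaced by $S^+\le N^+=W$.
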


\begin{proof}[Proofs of Proposition~\ref{prop:almostsimple} and Lemma~\ref{lem:AlignedToTG}]
We first prove part \eqref{lem:AlignedToTG1} of the lemma. Let $N_0$ be the stabilizer in $G$ of $K \bs K W$ and observe that $S$ is a subgroup of $N_0$ and  $S=S^+\le N_0^+$. Since it contains the almost simple subgroup $S$, $N_0^+$ is not unipotent, thus it is connected and almost simple. We will show that in fact $S=N_0^+$.

Since $N_0$ acts by isometries on the symmetric space $Z=W\cap K \bs W \simeq K \bs K W$ we have a natural continuous homomorphism from $N_0$ to the group of isometries, $\Isom(Z)$, endowed with the compact open topology. It goes back to Elie Cartan that the image of $S$ coincides with the identity component of $\Isom(Z)$, as $S$ is semisimple connected group of isometries of the symmetric space $Z$ \cite[Thm.~V.4.1]{Helgason}. The same holds for $N_0^+$. Since both $S$ and $N_0^+$ are almost simple, the restriction of the above homomorphism to each has a finite kernel. It follows that $S$ has finite index in $N_0^+$, hence indeed $S=N_0^+$, since $N_0^+$ is connected.

In the above discussion $S$ was arbitrary, thus applying it in the special case $S=W$ we conclude that $W=N_0^+$. Therefore $S=W$ holds \emph{a priori} and this proves part \eqref{lem:AlignedToTG1}.

Next we prove part \eqref{lem:AlignedToTG2} of the lemma. By the previous discussion we have that $N_0\leq N$, since $W=N_0^+$ is normal in $N_0$. Observe that $K \bs K W$ is the unique $W$-invariant totally geodesic subspace of its type. Indeed, given any $W$-invariant totally geodesic subspace of the same type $Z\subseteq K\bs G$ and $z \in Z$, the function $d(\cdot,zW)$ measuring distance to the $W$-orbit of $z$ is constant on $K \bs K W$. This implies that $z W$ and $K \bs K W$ have identical boundaries in the visual compactification of $K\bs G$. Since $K \bs G$ has negative curvature, this implies that $z W \subset K \bs K W$, hence $Z = K \bs K W$.

It also follows that $N\leq N_0$, so $N=N_0$ and $N$ is indeed the stabilizer in $G$ of the totally geodesic subspace $K \bs K W$. We furthermore see that $N^+=N_0^+=W$. For every $n\in N$, it also follows that $K\bs Kn=K\bs Kw$ for some $w\in W$, thus $N\subseteq KW$. This proves part \eqref{lem:AlignedToTG2}.

Part \eqref{lem:AlignedToTG4} of the lemma follows immediately from part \eqref{lem:AlignedToTG2}. Indeed, notice that $W=W^+\leq S^+ \leq N^+=W$ implies that $S^+=W$ and the sequence of inclusions
\[
K\bs KW \subseteq K\bs K S \subseteq K\bs K N \subseteq K\bs K(KW) =  K\bs KW
\]
then has equality everywhere.

We now turn to the proof of Proposition~\ref{prop:almostsimple}. The fact that the standard subgroups are pairwise nonconjugate, noncompact, connected, almost simple, closed subgroups of $G$ generated by unipotent elements is obvious, so we only need to show that any other such group is conjugate to a standard one. Let $S \le G$ be a noncompact, connected, almost simple, closed subgroup generated by unipotent elements. By the Karpelevich--Mostow Theorem \cite{Karpel,MostowDec} for some $h \in G$ the $S$-orbit $K\bs KhS \subseteq K\bs G$ is totally geodesic. Thus, by Proposition~\ref{prop:standard}, there exist an element $g\in G$ and a standard subgroup $W_0\le G$ such that $K\bs KhS=K\bs KW_0g$. Rewriting we have $K\bs Khg^{-1}S^g=K\bs KW_0$ and we conclude that $S^g$ preserves $K\bs KW_0$ and acts transitively on it. By part \eqref{lem:AlignedToTG1} we get that $S^g=W_0$ and this proves Proposition~\ref{prop:almostsimple}.

We are now in a position to prove part \eqref{lem:AlignedToTG3} of the lemma. Note that $W\le S$ implies that $W=W^+ \le S^+$. It follows that $S^+$ is not unipotent, thus it is connected and almost simple. Using Proposition~\ref{prop:almostsimple} there exist an element $g\in G$ and a standard subgroup $W_0\le G$ such that $(S^+)^g=W_0$. We conclude that $W^g\le W_0$. We claim that the group $W^g$, which is a noncompact, connected, almost simple, closed subgroup of $W_0$, is conjugate in $W_0$ to a standard subgroup of $W_0$. Indeed, when $W_0=W^c_m$ for some $1\le m\le n$ this follows from Proposition~\ref{prop:almostsimple}, replacing the role of $G$ by $W_0\cong \SU(m,1)$, and when $W_0=W^r_m$ for some $2\le m\le n$ this follows from Remark~\ref{rem:almostsimple} applied to $W_0\simeq \SO_0(m,1)$.
In any case, this standard subgroup of $W_0$ must be of the same type as $W^g$, so it must be $W$ itself.

Therefore, there is an $h\in W_0$ such that $W^{hg}=(W^g)^h=W$. It follows that $hg\in N$, thus by part \eqref{lem:AlignedToTG2}, $hg=kw$ for some $k\in K$ and $w\in W$. The sequence of equations
\[
kS^+k^{-1}=kwS^+w^{-1}k^{-1}=hgS^+g^{-1}h^{-1}=hW_0h^{-1}=W_0,
\]
implies that $(kSk^{-1})^+=kS^+k^{-1}=W_0$, and therefore
\[
W_0\le kSk^{-1} \le N_0,
\]
where $N_0$ is the normalizer of $W_0$ in $G$. By part \eqref{lem:AlignedToTG4}
we get
\[
K\bs K S^+k^{-1}=K\bs K (S^+)^k=K\bs KW_0=K\bs K S^k=K\bs K Sk^{-1},
\]
and upon applying $k$ on the right we conclude that indeed
\[
K\bs KS=K\bs KS^+=K\bs KW_0k.
\]
This is a totally geodesic subspace of $K\bs G$ by Proposition~\ref{prop:standard}. The final statement follows from the essential uniqueness of an $S$-invariant measure on $(S\cap K) \bs S \simeq K\bs K S$.
This proves part \eqref{lem:AlignedToTG3} and thus completes the proof.
\end{proof}

\subsection{Parabolic subgroups of $G$ and some of their subgroups}\label{subsec:subgroups}

In this subsection we continue the discussion of special subgroups of $G$ begun in the previous one. See \cite[\S VII.7]{Knapp} for definitions and properties of the structure theory for simple Lie groups and their parabolic subgroups. Now we focus mostly on parabolic subgroups of $G$ and some of their subgroups. Recall that $e_1,\ldots,e_{n+1}$ denotes the standard basis of $\mathbb{C}^{n+1}$ and that $G$ is the subgroup of $\SL_{n+1}(\bbC)$ preserving the form $h$ given in Equation \eqref{eq:h}.

Let $P<G$ be the stabilizer of the isotropic line spanned by $e_1$. This is a parabolic subgroup of $G$ and all proper parabolic subgroups of $G$ are conjugate to $P$, since $G$ has real rank $1$. Some important subgroups of $P$ are
\begin{align*}
A & = \left\{
\left( \begin{array}{c|ccc|c}
\lambda & & 0 & & 0 \\
\hline
~ & & & & ~ \\
0 & & I_{n-1} & & 0 \\
~ & & & & ~ \\
\hline
0 & & 0 & & \lambda^{-1}
\end{array} \right)
\ : \ \lambda \in \mathbb{R}^* \right\}, \\
M & = \left\{ \left(
\begin{array}{c|ccc|c}
\theta & & 0 & & 0 \\
\hline
~ & & & & ~ \\
0 & & T & & 0 \\
~ & & & & ~ \\
\hline
0 & & 0 & & \theta
\end{array} \right)
\ :\
\begin{array}{c}
\theta \in \U(1),\\
T \in \U(n-1) \\
\theta^2=\det(T)^{-1}
\end{array}
\right\}, \\
U & =\left\{ \left(
\begin{array}{c|ccc|c}
1 & & -v^* & & ib -\frac{1}{2}\|v\|^2 \\
\hline
~ & & & & ~ \\
0 & &  I_{n-1} & & v \\
~ & & & & ~ \\
\hline
0 & & 0 & & 1 \\
\end{array} \right)
\ :\  v\in \mathbb{C}^{n-1},~b\in \mathbb{R}
\right\}.
\end{align*}
Here $v^*$ denotes the complex conjugate transpose of $v$. Note that $U$ is isomorphic to the real $(2n-1)$-dimensional Heisenberg group $\bbH_{2n-1}(\mathbb{R})$ and that $A \cong \bbR^*$.

One checks that $P$ is generated by $M$, $A$, and $U$. Note that $A$ is a maximal $\bbR$-split torus of $G$, $U$ is the unipotent radical of $P$, and $M$ is a compact reductive group that commutes with $A$. Thus the Langlands decomposition of $P$ is given by $P=MAU$.

Every element of $P$ can be represented in an obvious way by parameters $(\lambda,\theta,T,v,b)$ using the coordinates introduced above. Such a representation is unique up to the order two intersection group $M\cap A$, that is, $(\lambda,\theta,T,v,b)$ represents the same element as $(-\lambda,-\theta,T,v,b)$. In the remainder of this subsection we will use this representation often. We will also represent elements of $M$, $A$, and $U$ by $(\theta,T)$, $\lambda$, and $(v,b)$ in the obvious way.

Let $C < M$ be the subgroup of scalar matrices in $G$. These scalars are the $(n+1)^{st}$ roots of unity, thus $C$ is a cyclic group of order $n+1$. Note that $C$ is the center of $G$, and it is easy to see that it is also the center of $P$.

The group $U$ is a two step nilpotent group with center
\[
Z =
\left\{ \left(
\begin{array}{c|ccc|c}
1 & & 0 & & ib \\
\hline
~ & & & & ~ \\
0 & & I_n & & 0 \\
~ & & & & ~ \\
\hline
0 & & 0 & & 1 \\
\end{array} \right)
\ :\  b\in \mathbb{R}
\right\}.
\]
This is a characteristic subgroup of $U$, hence it is a normal subgroup of $P$. Under the identification $Z\cong \mathbb{R}$, the conjugation action of $P$ on $Z$ is given by the homomorphism:
\begin{align*}
P &\to \mathbb{R}^*_+<\mathbb{R}^*\cong \GL_1(\mathbb{R}) \\
(\lambda,\theta,T,v,b) &\mapsto \lambda^2
\end{align*}
Since $\mathbb{R}^*_+$ has no nontrivial compact subgroups, we obtain the following, which we record for future reference.

\begin{lemma} \label{lem:Zcent}
Every compact subgroup of $P$ commutes with $Z$.
\end{lemma}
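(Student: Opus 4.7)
The plan is to apply the given description of the conjugation action of $P$ on $Z$ directly. The excerpt identifies $Z \cong \bbR$ and states that the conjugation action of $P$ on $Z$ is realized by the continuous homomorphism $\chi : P \to \bbR^*_+$ sending $(\lambda, \theta, T, v, b) \mapsto \lambda^2$. Thus an element $p \in P$ commutes with $Z$ if and only if $\chi(p) = 1$.

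Given any compact subgroup $K^\prime \le P$, its image $\chi(K^\prime) \le \bbR^*_+$ is compact because $\chi$ is continuous. But $\bbR^*_+$, being isomorphic as a topological group to the additive group $(\bbR, +)$ via the logarithm, contains no nontrivial compact subgroups. Therefore $\chi(K^\prime) = \{1\}$, which is precisely the statement that every element of $K^\prime$ centralizes $Z$.

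The argument is essentially a one-line consequence of the explicit formula for the conjugation character that was computed just before the statement of the lemma, so there is no serious obstacle. The only point worth remarking is that one should verify that commutation with $Z$ is genuinely equivalent to having trivial image under $\chi$: since $\chi$ was defined as the homomorphism induced by the conjugation action on the abelian group $Z$, its kernel is exactly the centralizer of $Z$ in $P$, so the equivalence is immediate.
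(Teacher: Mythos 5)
Your argument is correct and is essentially the same as the paper's: the paper proves the lemma precisely by noting that the conjugation action of $P$ on $Z\cong\bbR$ is given by $(\lambda,\theta,T,v,b)\mapsto\lambda^2\in\bbR^*_+$, and that $\bbR^*_+$ has no nontrivial compact subgroups. Your additional remark that the kernel of this character is exactly the centralizer of $Z$ in $P$ is the same (implicit) observation, so there is nothing to add.
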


The quotient group $U/Z$ is naturally identified with $\mathbb{C}^{n-1}$
by the map $(v,b)\mapsto v$. Under this identification, the conjugation action of $P$ on $U/Z$ is given by the homomorphism:
\begin{align*}
P &\to \bbR^*\cdot \U(n-1)<\GL_{n-1}(\mathbb{C}) \\
(\lambda,\theta,T,v,b) &\mapsto \lambda \theta^{-1}T
\end{align*}
Here $\mathbb{R}^*<\GL_{n-1}(\mathbb{C})$ is considered as the group of real scalar matrices. Note that this homomorphism induces isomorphisms $M/C \cong  \U(n-1)$ and $MA/C \cong \bbR^*\cdot \U(n-1)$. In particular, the conjugation action of $MA/C$ on $(U / Z) \ssm \{0\}$ is transitive and faithful. This is crucial in proving the following lemma, which describes the normal subgroups of $P$.

\begin{lemma} \label{lem:normalinP}
If $N \trianglelefteq P$ is a normal subgroup, either $U \le N$ or $N \le CZ$.
\end{lemma}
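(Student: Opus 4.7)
The plan is to first analyze $H := N \cap U$ using the faithful transitive action of $MA/C$ on $(U/Z) \setminus \{0\}$ just established, and then propagate the conclusion to all of $N$.

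The first step: since $H$ is a $P$-normal subgroup of $U$, the image $HZ/Z$ is a $P$-invariant subgroup of $U/Z$, and the transitivity of $MA/C$ on $(U/Z) \setminus \{0\}$ forces the dichotomy $HZ = U$ or $H \subseteq Z$. In the first case I would conclude $U \leq N$ using that $U$ is a Heisenberg group with $[U, U] = Z$ and that the commutator factors through a nondegenerate symplectic form on $U/Z$: since $H$ surjects onto $U/Z$, this gives $[H, H] = Z$, hence $Z \subseteq H$, and then $H = HZ = U$.

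In the complementary case $H \subseteq Z$, a first step gives $N \leq CU$. Indeed for any $n \in N$ and $u \in U$ we have $[n, u] \in H \subseteq Z$, so $n$ acts trivially on $U/Z$ by conjugation; since the $P$-action on $U/Z$ factors through $MA/C$ (with $U$ itself acting trivially because $[U, U] \leq Z$) and $MA/C$ acts faithfully, its kernel in $P$ is exactly $CU$. To upgrade $N \leq CU$ to $N \leq CZ$, I would use that $C$ is central in $G$ and $C \cap U = \{1\}$, so that each $n \in N$ decomposes uniquely as $n = cu$ with $c \in C$, $u \in U$. Then for any $a \in A$ with scaling factor $\lambda \neq 1$, centrality of $c$ gives
\[
[a, n] \;=\; a n a^{-1} n^{-1} \;=\; (a u a^{-1}) u^{-1} \;\in\; N \cap U \;\subseteq\; Z.
\]
Reading this modulo $Z$ in the additive group $U/Z \cong \bbC^{n-1}$, where $a$ acts by multiplication by $\lambda$, gives $(\lambda - 1)\bar u = 0$, so $u \in Z$ and hence $n \in CZ$.

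The main obstacle is this second case: the naive conclusion $N \leq CU$ could a priori allow a ``twisted'' copy of $U$ inside $CU$, namely the graph of some homomorphism $U \to C$. Ruling this out is exactly the content of the final $A$-scaling argument above, which exploits the fact that $A$ acts on the abelianization $U/Z$ by a nontrivial real character. Aside from this, both subcases are essentially bookkeeping on top of the transitivity and faithfulness properties of $MA/C$ on $U/Z$ already recorded in the preceding paragraphs.
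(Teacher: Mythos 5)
Your proof is correct, and it follows the same overall strategy as the paper --- a dichotomy driven by the transitivity of the $MA/C$-action on $(U/Z)\ssm\{0\}$, with the Heisenberg commutator structure handling one branch and faithfulness of that action handling the other --- but the bookkeeping is genuinely different. The paper runs the dichotomy in the quotient $P/CZ$, on $\theta(N)\cap\theta(U)$, whereas you run it directly on $H=N\cap U$ via $HZ/Z$. This changes both endgames: in the branch $HZ=U$, your observation that commutators depend only on classes mod $Z$ gives $Z=[H,H]\le H$ and hence $H=U$ at once, so you avoid the paper's extra step that $U$ is connected while $CN/N$ is totally disconnected; conversely, in the branch $H\le Z$ your hypothesis is a priori weaker than the paper's (triviality of $\theta(N)\cap\theta(U)$), so after reaching $N\le CU$ you cannot simply quote triviality of the intersection as the paper does, and you correctly close the gap with the $A$-scaling computation $[a,n]=[a,u]\in N\cap U\le Z$, which forces $(\lambda-1)\bar u=0$ and hence $u\in Z$. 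Both of your auxiliary facts ($C\cap U=\{1\}$, $[U,U]=Z$, and the kernel of the $P$-action on $U/Z$ being exactly $CU$) are consistent with the explicit coordinates in \S\ref{subsec:subgroups}, so the argument is complete; the trade-off is that you exchange the paper's connectedness argument for a commutator containment and its intersection-triviality shortcut for an explicit eigenvalue argument on $U/Z$.
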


\begin{proof}
Consider the quotient map $\theta:P\to P/CZ$. Then $\theta(N)\cap \theta(U)$ is normal in $P/CZ$, hence transitivity of the $MA/C$ conjugation action on $(U/Z) \ssm \{0\}$ implies that this intersection is either trivial or all of $\theta(U)$. Assume the latter case. We get that $UC \le NCZ$, thus $UC \le (UC \cap N)CZ$.
Taking commutators and using the centrality of $CZ$ in $CU$, we get
\begin{align*}
Z=[U,U]&=[UC,UC] \\
&\le [(UC\cap N)CZ,(UC\cap N)CZ] =[CU\cap N,CU\cap N]
\end{align*}
and we deduce that $Z\le N$. It follows that $UC \leq CN$, but $U$ is connected and $C N / N$ is totally disconnected, hence $U \le N$ and we are done.

Therefore, it remains to consider the case where $\theta(N)\cap \theta(U)$ is trivial. It follows that $\theta(N)$ commutes with $\theta(U)$, as they are both normal subgroups of $P/CZ$. Since the action of $\theta(MA) \cong MA/C$ on $\theta(U) \cong U/Z$ is faithful, we deduce that $N$ is in the kernel of the natural map $P =MAU \to MA/C$, that is $N\le UC$. By triviality of $\theta(N)\cap \theta(U)=\theta(N)\cap \theta(UC)$, we see that $N$ is in the kernel of
$\theta$, hence $N \le CZ$ as desired.
\end{proof}

Let $D<G$ be the subgroup stabilizing the plane spanned by $e_1$ and $e_{n+1}$. Thus $D$ also stabilizes the $h$-orthogonal complement of this plane, the subspace spanned by $e_2,\ldots,e_n$,
and we have:
\[
D = \left\{ \left(
\begin{array}{c|ccc|c}
\theta a & & 0 & & i\theta b \\
\hline
~ & & & & ~ \\
0 & &  T & & 0 \\
~ & & & & ~ \\
\hline
-i\theta c & & 0 & & \theta d
\end{array} \right)
\ :\
\begin{array}{c}
a,b,c,d \in \mathbb{R},\, ad-bc=1,\\
\theta \in \U(1),\, T\in \U(n-1),\\
\theta^2=\det(T)^{-1}
\end{array}
\right\}.
\]

We will be particularly interested in the group $P\cap D$ stabilizing both the line spanned by $e_1$ and the plane spanned by $e_1$ and $e_{n+1}$.
\begin{lemma} \label{lem:DcapP}
We have $P\cap D=MAZ$.
\end{lemma}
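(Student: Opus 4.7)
The plan is to verify the two inclusions separately using the explicit matrix coordinates set up in \S\ref{subsec:subgroups}.

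For the easy inclusion $MAZ \subseteq P \cap D$, I would simply observe that the line spanned by $e_1$ and the plane spanned by $e_1,e_{n+1}$ are both preserved by each of the three subgroups $M$, $A$, and $Z$: the torus $A$ scales $e_1$ by $\lambda$ and $e_{n+1}$ by $\lambda^{-1}$; the compact reductive group $M$ scales $e_1$ and $e_{n+1}$ by $\theta$ while acting on $\spn(e_2,\ldots,e_n)$ by $T$; and $Z$ fixes $e_1$ and sends $e_{n+1}$ to $e_{n+1} + ib\, e_1$. In particular each of these three subgroups lies in $P \cap D$, so their product does too.

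For the reverse inclusion, I would take an arbitrary element of $P$ written in the Langlands coordinates as $(\lambda,\theta,T,v,b)$ and compute the product $mau$ explicitly as a matrix in $\SL_{n+1}(\bbC)$, obtaining
\[
\begin{pmatrix}
\lambda\theta & -\lambda\theta\, v^{*} & \lambda\theta\bigl(ib-\tfrac12\|v\|^{2}\bigr) \\
0 & T & Tv \\
0 & 0 & \lambda^{-1}\theta
\end{pmatrix}.
\]
The condition that this element lies in $D$ is that it stabilizes the plane spanned by $e_1$ and $e_{n+1}$, equivalently that both images lie in this plane. The image of $e_1$ is already in the plane since the first column has zeros in the middle block, so the only new constraint comes from the image of $e_{n+1}$, namely the vanishing of the middle block $Tv$ of the last column. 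Since $T \in \U(n-1)$ is invertible, this forces $v=0$, and an element of $P$ with $v=0$ is visibly a product of factors from $M$, $A$, and $Z$.

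There is no serious obstacle here; the only point to be careful about is keeping track of the fact that the parametrization $(\lambda,\theta,T,v,b) \mapsto mau$ is two-to-one through $M\cap A$, but this ambiguity does not affect the vanishing-of-$v$ argument, which is an intrinsic statement about the element. Once both inclusions are established we conclude $P\cap D = MAZ$.
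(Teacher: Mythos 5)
Your proof is correct and is essentially the paper's argument: the paper simply asserts that the lemma "is easily deduced from the matrix representations of $P$ and $D$," and your computation of $mau$ in the Langlands coordinates, with the observation that stabilizing the plane $\spn(e_1,e_{n+1})$ forces $Tv=0$ and hence $v=0$, is exactly the verification being alluded to. No gaps.
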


\begin{proof}
This is easily deduced from the matrix representations of $P$ and $D$.
\end{proof}

The following lemma is easy, nevertheless we give a detailed proof due to its importance to our considerations.

\begin{lemma} \label{lem:aut(P)}
Fix $d\geq 1$ and consider the group
\[ \bbR^*\cdot \U(d) \ltimes \mathbb{C}^{d} \leq \GL_{d}(\bbC) \ltimes \mathbb{C}^{d}. \]
Its group of outer automorphisms, $\Out(\bbR^*\cdot \U(d) \ltimes \mathbb{C}^{d})$, is of order 2 and its nontrivial element corresponds to complex conjugation of $\GL_{d}(\bbC) \ltimes \mathbb{C}^{d}$.
\end{lemma}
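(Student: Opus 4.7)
The plan is to identify $V = \bbC^{d}$ intrinsically inside $G := \bbR^{*}\cdot\U(d) \ltimes \bbC^{d}$ and then reduce the computation of $\Out(G)$ to a normalizer calculation inside $\GL_{\bbR}(V) = \GL_{2d}(\bbR)$. First, observe that $V$ is connected, abelian (hence nilpotent), and normal, while $G/V \cong L := \bbR^{*}\cdot\U(d) \cong \bbR^{*}_{+}\times \U(d)$ is reductive and therefore has no nontrivial connected normal nilpotent subgroup. Hence $V$ coincides with the nilradical of $G$, is characteristic, and is preserved by every $\phi \in \Aut(G)$.

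Next I would show that any complement to $V$ in $G$ is $V$-conjugate to $L$. Such a complement is the graph of a continuous $1$-cocycle $f:L \to V$, and $H^{1}(L,V) = 0$ for an elementary reason tailored to the current setup: set $v_{0} := f(\lambda I)/(1-\lambda)$ for any $\lambda \in \bbR^{*}_{+} \ssm \{1\}$. Applying the cocycle identity to $(\lambda I)(\mu I) = (\mu I)(\lambda I)$ shows that $v_{0}$ is independent of $\lambda$, and then comparing the two expressions for $f((\lambda I)A) = f(A(\lambda I))$ yields $f(A) = (I-A)v_{0}$ for every $A \in L$. Composing $\phi$ with an inner automorphism by an element of $V$ therefore lets us assume $\phi(L) = L$. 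The equivariance $\phi|_{V}(Av) = \phi|_{L}(A)\phi|_{V}(v)$ then forces $\phi|_{L}(A) = TAT^{-1}$ for $T := \phi|_{V}$, so $T$ lies in $N := N_{\GL_{\bbR}(V)}(L)$. Since inner automorphisms of $G$ restricted to $V$ realize exactly the tautological action of $L$, we obtain $\Out(G) \cong N/L$.

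It remains to compute $N$. The complex structure on $V$ is multiplication by $J := iI \in L$, which is central in $L$ and satisfies $J^{2} = -I$. For $T \in N$, conjugation by $T$ is an automorphism of $L$, so $TJT^{-1}$ lies in the center $Z(L) = \bbC^{*}$ of scalar matrices and squares to $-I$, hence equals $\pm J$. If $TJT^{-1} = J$ then $T$ is $\bbC$-linear, so $T \in \GL_{d}(\bbC)$. Normalizing $L$ inside $\GL_{d}(\bbC)$ is equivalent to normalizing $\U(d)$: if $TUT^{-1} = \mu V$ with $U,V \in \U(d)$ and $\mu > 0$, then $1 = |\det U| = \mu^{d}|\det V| = \mu^{d}$ forces $\mu = 1$. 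The normalizer of $\U(d)$ in $\GL_{d}(\bbC)$ is $\bbC^{*}\cdot\U(d)$ by a polar-decomposition argument (the positive Hermitian part $P$ must commute with $\U(d)$ after squaring, hence $P \in \bbR^{*}_{+}$), and this normalizer coincides with $L$ since $\bbC^{*} = \bbR^{*}_{+}\cdot\U(1)$ and $\U(1) \subset \U(d)$. Thus $T \in L$. If instead $TJT^{-1} = -J$ then $T$ is $\bbC$-antilinear and factors as $T = T'c$, where $c$ is complex conjugation and $T' \in \GL_{d}(\bbC)$; since $c$ commutes with $\bbR^{*}_{+}$ and sends $\U(d)$ to itself, $c$ normalizes $L$, and the previous case forces $T' \in L$, so $T \in Lc$. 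Hence $N = L \sqcup Lc$ and $\Out(G) \cong \bbZ/2$, generated by $c$.

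I expect the main obstacle to be the Levi-complement conjugacy step. Because $L$ is only reductive, not semisimple, Whitehead-type vanishing theorems do not apply directly, so the argument must exploit the explicit scaling subgroup $\bbR^{*}_{+} \subset L$ acting on $V$ with no fixed vectors. Once this reduction is in hand the rest is a direct computation using the matrix description of $L$ and Schur's lemma for the irreducible action of $\U(d)$ on $\bbC^{d}$.
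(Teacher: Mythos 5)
Your argument reaches the right conclusion and its skeleton matches the paper's: after an inner automorphism, reduce to an automorphism preserving both $V=\bbC^d$ and the Levi factor $L=\bbR^*\cdot\U(d)$, use equivariance to see that the induced map on $L$ is conjugation by $T=\phi|_V$, and then compute the normalizer of $L$ in $\GL_{2d}(\bbR)$ to be $L\sqcup Lc$, giving $\Out\cong\bbZ/2$ generated by conjugation. You differ in two places. First, the paper simply invokes conjugacy of Levi subgroups to arrange $\tau(S)=S$, whereas you prove the needed conjugacy by hand: a complement of $V$ is the graph of a cocycle $f$, and the scaling subgroup $\bbR^*_+$ forces $f(A)=(I-A)v_0$; this is a nice self-contained replacement (and in fact uses no continuity of $f$). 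Second, for the normalizer computation the paper isolates complex conjugation through the characteristic subgroups $\U(1)$ and, for $d\ge2$, $\SU(d)$, using that $\SU(d)$ is self-normalizing in $\SL_d(\bbC)$ (with $d=1$ handled separately); you instead use the complex structure $J=iI$ to split off the antilinear coset and compute $N_{\GL_d(\bbC)}(\U(d))=\bbC^*\cdot\U(d)$ by polar decomposition and Schur, which treats all $d\ge1$ uniformly. Both routes are valid, and your determinant trick reducing normalization of $L$ to normalization of $\U(d)$ is correct.

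One justification is wrong as stated, although the fact it supports is true: you assert that $L$, being reductive, ``has no nontrivial connected normal nilpotent subgroup.'' It does have such subgroups---its center $\bbC^*$, or the factor $\bbR^*_+$, is connected, normal and abelian---so this alone does not identify $V$ with the nilradical of $G$. What you need is the extra observation that a connected normal nilpotent subgroup $N'$ of $G$ containing $V$ has image in $L$ lying in the center $\bbC^*$, and any nontrivial central $z\neq 1$ acts on $V$ with no nonzero fixed vectors, so the iterated commutators $[(z,w),(I,v)]=(I,(z-1)v),\,(I,(z-1)^2v),\dots$ never vanish and $N'$ cannot be nilpotent; hence the nilradical is exactly $V$, which is therefore characteristic. (The paper itself just asserts that $V$ is characteristic.) With this local repair your proof is complete.
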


\begin{proof}
For brevity, we set $S=\bbR^*\cdot \U(d)$ and $V=\mathbb{C}^{d}$. We consider an automorphism $\tau:S\ltimes V\to S\ltimes V$ and will show that, up to an inner automorphism, $\tau$ is either trivial or complex conjugation.

Identifying $V$ with $\mathbb{R}^{2d}$ in the standard way, we identify its group of continuous automorphisms with the the real general linear group $\GL_{2d}(\mathbb{R})$. For an operator $t\in \GL_{2d}(\mathbb{R})$ and $v\in V$ we denote the corresponding action by $t\cdot v$. We identify the conjugation action of $S$ on $V$ as the linear action of $S<\GL_{2d}(\mathbb{R})$ and for $s\in S$ and $v\in V$, define $s\cdot v=svs^{-1}=v^s$.

We have that $\tau(S)<S\ltimes V$ is a Levi subgroup. See \cite[\S6.8]{Bourbaki1} for details about Levi decompositions (at the level of Lie algebras). As all Levi subgroups are conjugate, we assume as we may that $\tau(S)=S$. Let $\al$ denote the automorphism of $S$ induced by $\tau$. Since $V$ is a characteristic subgroup of $S\ltimes V$, we have $\tau(V)=V$. Thus $\tau$ induces a continuous automorphism of $V$ corresponding to a fixed element $t\in \GL_{2d}(\mathbb{R})$. For $s\in S$ and $v\in V$ we have
\[
t s\cdot v=t\cdot v^s=\tau(v^s)=\tau(v)^{\tau(s)}=(t\cdot v)^{\alpha(s)}=\alpha(s) t\cdot v.
\]
Thus $\alpha(s)=s^t$. We conclude that $t$ is contained in $N$, where $N$ is the normalizer of $S$ in $\GL_{2d}(\mathbb{R})$, and $\alpha$ is the automorphism of $S$ obtained by conjugating by $t$.

We claim that $S<N$ is of index two, where the nontrivial coset is generated by complex conjugation. We fix $g\in N$ and argue that, up to complex conjugation, $g$ is in $S$. Note that $N$ commutes with the group of real scalars $\mathbb{R}^*<S$ and the group $\U(1)<S$ consisting of modulus 1 complex scalars is characteristic. Indeed, $\U(d)<S$ is characteristic, as it is the unique maximal compact subgroup of $S$, and $\U(1)$ is its center. Thus $g$ normalizes $U(1)$.

Since the unique nontrivial automorphism of $\U(1)$ is complex conjugation, we can assume that $g$ acts trivially on $\U(1)$. We conclude that $g$ commutes with the group of complex scalars $\mathbb{C}^*$, hence $g$ is $\bbC$-linear, i.e., $g\in \GL_{d}(\mathbb{C})$. If $d=1$ then $g\in \GL_{d}(\mathbb{C})=S$, and we are done. We thus assume $d\geq 2$. Since the derived subgroup $\SU(d)<\U(d)$ is also characteristic, we get that $g$ normalizes it as well. However, $\SU(d)$ is a maximal subgroup of $\SL_{d}(\mathbb{C})$, hence it equals its own normalizer in $\SL_{d}(\mathbb{C})$. It follows that the normalizer of $\SU(d)$ in $\GL_{d}(\mathbb{C})$ is $\mathbb{C}^*\cdot \SU(d)=S$,
thus $g\in S$. This proves the claim.

We conclude that indeed, up to an inner automorphism, $\tau$ is either trivial or complex conjugation.
\end{proof}

\begin{prop} \label{prop:autP}
Consider $\conj{P}=P/CZ\cong \bbR^*\cdot \U(n-1) \ltimes \mathbb{C}^{n-1}$ and let $\tau:P\to \conj{P}$ be a continuous homomorphism with one-dimensional kernel. Then there exists an inner automorphism $i:P\to P$ such that
\[
\mbox{either} \quad \tau=\theta \circ i
\quad \mbox{or} \quad \tau=c\circ \theta\circ i,
\]
where $\theta : P \to \conj{P}$ is the obvious quotient map and $c:\conj{P}\to \conj{P}$ corresponds to complex conjugation on $\bbR^*\cdot \U(n-1) \ltimes \mathbb{C}^{n-1}$ as discussed in Lemma~\ref{lem:aut(P)}.

In particular, $\tau$ is surjective, $\ker(\tau) = \ker(\theta) = CZ$ and, up to precomposing $\tau$ by an inner automorphism of $P$, $\tau(P\cap D)=\theta(P\cap D)$.
\end{prop}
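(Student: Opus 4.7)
The plan is to identify $\ker(\tau)$ exactly with $CZ$, deduce that $\tau$ factors through an injective endomorphism $\tilde\tau$ of $\conj{P}$ that is in fact an automorphism, and then apply Lemma~\ref{lem:aut(P)}. First, since $\ker(\tau)$ is a closed one-dimensional normal subgroup of $P$, Lemma~\ref{lem:normalinP} implies that either $U\leq\ker(\tau)$ or $\ker(\tau)\leq CZ$. The former is ruled out because $\dim U=2n-1\geq 3$, so $\ker(\tau)\leq CZ$. As $CZ$ is one-dimensional with identity component $Z$ and component group $C$, this forces $\ker(\tau)=C'Z$ for some subgroup $C'\leq C$.

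The central step, which I expect to be the main obstacle, is to upgrade this to $C'=C$. The key input is that the identity component $\conj{P}^{0}\cong \bbR^{>0}\cdot\U(n-1)\ltimes\bbC^{n-1}$ has trivial center; this is a short direct calculation using that $\bbR^{>0}\cdot\U(n-1)$ acts faithfully on $\bbC^{n-1}$. The induced map $\tilde\tau\co P/(C'Z)\to\conj{P}$ is an injective Lie group homomorphism between groups of equal dimension, hence an open embedding, so $\tau(P)$ is an open subgroup of $\conj{P}$ containing $\conj{P}^{0}$. Since $C$ is central in $P$, the image $\tau(C)$ is a finite central subgroup of $\tau(P)$, and therefore commutes with $\conj{P}^{0}$; the centralizer of $\conj{P}^{0}$ in $\conj{P}$ is trivial by the same computation, so $\tau(C)=\{1\}$. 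This gives $C\leq\ker(\tau)$, and thus $\ker(\tau)=CZ$.

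With this in hand, $\tau=\tilde\tau\circ\theta$ for an injective continuous endomorphism $\tilde\tau\co\conj{P}\to\conj{P}$. A brief component analysis (using that openness forces $\tilde\tau(\conj{P}^{0})=\conj{P}^{0}$, and that injectivity prevents $\tilde\tau$ from mapping the nonidentity component back into $\conj{P}^{0}$) shows $\tilde\tau$ is in fact an automorphism. Lemma~\ref{lem:aut(P)} then produces $\bar{g}\in\conj{P}$ such that $\tilde\tau$ is either the inner automorphism $\bar{i}_{\bar{g}}$ or its composition with complex conjugation $c$. Lifting $\bar{g}$ to $g\in P$ via the surjection $\theta$ and setting $i=i_{g}$, the identity $\theta\circ i_{g}=\bar{i}_{\bar{g}}\circ\theta$ yields $\tau=\theta\circ i$ or $\tau=c\circ\theta\circ i$ (in the latter case after absorbing $c$ past the inner automorphism via $\bar{i}_{\bar{g}}\circ c=c\circ\bar{i}_{c(\bar{g})}$).

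For the final assertion, I would precompose $\tau$ with $i^{-1}$, so that $\tau\circ i^{-1}$ equals either $\theta$ or $c\circ\theta$. By Lemma~\ref{lem:DcapP}, $P\cap D=MAZ$, so $\theta(P\cap D)=MA/C$ sits inside $\conj{P}$ as the Levi factor $\bbR^{*}\cdot\U(n-1)$. In the first case the equality is tautological; in the second it suffices to note that complex conjugation preserves the Levi factor, since the complex conjugate of a unitary matrix is unitary.
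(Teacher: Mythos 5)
Your proposal is correct and follows essentially the same route as the paper: Lemma~\ref{lem:normalinP} pins $\ker(\tau)$ inside $CZ$, the image of $\tau$ is shown to be large so that centrality of $C$ forces $C\le\ker(\tau)$, the induced injective endomorphism of $\conj{P}$ is an automorphism, and Lemma~\ref{lem:aut(P)} together with Lemma~\ref{lem:DcapP} and conjugation-invariance of the Levi factor finishes the argument. The only cosmetic differences are that you use openness of the image and triviality of the centralizer of $\conj{P}^0$ where the paper invokes Zariski density of $\tau(P)$ and triviality of the center of $\conj{P}$, and you describe the Levi as $\bbR^*\cdot\U(n-1)$ while the paper writes the same subgroup as $\bbC^*\cdot\SU(n-1)$.
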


\begin{proof}
We first note that Lemma~\ref{lem:normalinP} implies that $Z \le \ker(\tau) \le CZ$, since $\dim(U)=2n-1\geq 3$ and $Z\cong \mathbb{R}$ is the identity component of $CZ$. By dimension considerations, $\tau(P)$ is Zariski dense in $\conj{P}$, as $\conj{P}$ is Zariski connected \cite[\S 23.1, Cor.~B]{Humphreys}. Thus $C \leq \ker(\tau)$, as $\conj{P}$ has trivial center, hence $\ker(\tau)=CZ$ and $\tau$ induces a continuous injection $\conj{\tau}:\conj{P}\to \conj{P}$ such that $\tau=\conj{\tau}\circ \theta$.

A continuous injective endomorphism of a Lie group with finitely many connected components is surjective, so $\conj{\tau}$ is necessarily an automorphism. By Lemma~\ref{lem:aut(P)} we get that either $\bar{\tau}=c\circ \inn(\bar{g})$ or $\bar{\tau}=\inn(\bar{g})$ for some $\bar{g}\in \conj{P}$, where $c$ denotes complex conjugation. Fixing $g\in P$ such that $\bar{g}=\theta(g)$ we get that either $\tau =c\circ \theta\circ \inn(g)$ or $\tau=\theta \circ \inn(g)$, proving the first part of the proposition.

It remains to show that $\tau(P\cap D)=\theta(P\cap D)$ up to an inner automorphism. This follows from Lemma~\ref{lem:DcapP}, since $\theta(P\cap D)=\theta(MA)$ corresponds to the subgroup $\mathbb{C}^*\cdot \SU(n-1)\subset \mathbb{C}^*\cdot \SU(n-1)\ltimes \mathbb{C}^{n-1}$, which is invariant under complex conjugation.
\end{proof}

\section{Compatibility and measuring incompatibility}\label{ssec:Compatible}

While much of this section applies more broadly, in what follows $G$ will always denote $\SU(n,1)$.
We fix its matrix presentation as discussed in \S\ref{sec:sun1} and freely use the notation introduced there.
In this section, we recall the definition of compatibility from \cite[\S3.4]{BFMS} and study compatibility of certain groups over local fields with $G$. In this paper, we will also need to measure the extent to which compatibility fails. This leads us to begin with the following sequence of definitions.

\begin{defn} \label{def:comp}
Let $P$ be a minimal parabolic subgroup of $G$ and $U$ its unipotent radical. An \emph{incompatibility datum} for $G$ is a tuple $(k,\bfH,\bfJ,U^\prime,\tau)$, where $k$ is a local field, $\bfH$ is a $k$-algebraic group, $\bfJ<\bH$ is a nontrivial $k$-subgroup, $U^\prime<U$ is a nontrivial proper subgroup, and $\tau:P \rightarrow N_{\bfH}(\bfJ)/\bfJ(k)$ is a continuous homomorphism such that the Zariski closure of $\tau(U^\prime)$ is not equal to the Zariski closure of $\tau(U)$. Here $N_{\bfH}(\bfJ)$ denotes the normalizer of $\bfJ$ in $\bfH$. We then have:
\begin{itemize}

\item
When $k$, $\bfH$, and $\bfJ$ are fixed, $\tau$ is an \emph{incompatible homomorphism} for $\bfJ$ if there exists $U^\prime < U$ as above so the corresponding tuple forms an incompatibility datum for $G$. If no such $U^\prime$ exists, then $\tau$ is called a \emph{compatible homomorphism}.

\item
When $k$ and $\bfH$ are fixed, $\bfJ$ is an \emph{incompatible subgroup} if there exists an incompatible homomorphism for $\bfJ$. Otherwise $\bfJ$ is a \emph{compatible subgroup}.

\item
When $k$ is fixed, $\bfH$ is an \emph{incompatible $k$-group} for $G$ if there is a nontrivial $k$-subgroup $\bfJ < \bfH$ that is an incompatible subgroup for the pair $(k, \bfH)$. Otherwise we call $\bfH$ a \emph{compatible $k$-group}.

\item
A local field $k$ is \emph{incompatible} with $G$ if there exists an incompatible $k$-group. Otherwise $k$ is called \emph{compatible}.
\end{itemize}
\end{defn}

We will see an example of an incompatibility datum in Proposition~\ref{prop:comppu}. First we prove some general results. To stay as notationally consistent with \cite{BaderFurman} as possible, if $\bfA$, $\bfB$ are $k$-algebraic groups we use the notation $\bfA/\bfB(k)$ to mean $(\bfA/\bfB)(k)$, that is, the $k$-points of $\bfA/\bfB$.

\begin{prop}\label{prop:compk}
Every nonarchimedean local field is compatible with $G$.
\end{prop}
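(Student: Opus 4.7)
The plan is to exploit total disconnectedness of the target: for $k$ nonarchimedean, every continuous homomorphism from the connected unipotent radical $U$ into $N_\bfH(\bfJ)(k)/\bfJ(k)$ is forced to be trivial, so no incompatibility datum can exist. This reduces the verification of Definition~\ref{def:comp} to a soft topological observation.

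First, I would recall from \S\ref{subsec:subgroups} that the unipotent radical $U$ of $P$ is isomorphic to the real $(2n-1)$-dimensional Heisenberg group $\bbH_{2n-1}(\bbR)$; in particular, $U$ is a connected real Lie group. Next, given any $k$-algebraic group $\bfH$ and any nontrivial $k$-subgroup $\bfJ<\bfH$, I would observe that $N_\bfH(\bfJ)(k)$ is a locally compact totally disconnected topological group, since $k$ is nonarchimedean and hence totally disconnected; this property passes to the closed subgroup $\bfJ(k)$ and therefore to the Hausdorff topological quotient $N_\bfH(\bfJ)(k)/\bfJ(k)$, which remains totally disconnected.

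The conclusion is immediate from the standard fact that any continuous homomorphism from a connected topological group to a totally disconnected Hausdorff group must be trivial. Applied to $\tau|_U$, this yields $\tau(U)=\{e\}$. Consequently, for every nontrivial subgroup $U'<U$, we have $\tau(U')\subseteq \tau(U)=\{e\}$, so the Zariski closures of $\tau(U')$ and $\tau(U)$ are both the trivial subgroup and in particular coincide. Hence no tuple $(k,\bfH,\bfJ,U',\tau)$ can satisfy the conditions of an incompatibility datum, and $k$ is compatible with $G$.

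There is no genuine obstacle here: the argument is essentially formal once one records the key topological input, namely that the group of $k$-points of a $k$-algebraic group is totally disconnected in the nonarchimedean case. Every remaining requirement of compatibility is then met vacuously through $\tau(U)=\{e\}$.
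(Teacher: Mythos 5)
Your argument is correct and is essentially the paper's own proof: both rest on the observation that for nonarchimedean $k$ the target $N_\bfH(\bfJ)/\bfJ(k)$ is totally disconnected, so any continuous homomorphism kills the connected group $U$, making the compatibility condition hold vacuously. The extra details you supply (identifying $U$ as a Heisenberg group, passing total disconnectedness to the quotient) are fine but not a different route.
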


\begin{proof}
If $k$ is a nonarchimedean local field, then for any $k$-group $\bfH$ and $k$-subgroup $\bfJ<\bfH$, $N_{\bfH}(\bfJ)/\bfJ(k)$ is totally disconnected. Therefore every continuous homomorphism $\tau:P \rightarrow N_{\bfH}(\bfJ)/\bfJ(k)$ must be trivial on the connected subgroup $U<P$. Compatibility clearly follows.
\end{proof}

In view of Proposition~\ref{prop:compk} we will be concerned in the rest of this subsection with archimedean local fields, i.e., $k=\mathbb{R}$ or $k=\mathbb{C}$. We then have the following general lemma.

\begin{lemma} \label{lem:Spar}
Let $(k,\bfH,\bfJ,U^\prime,\tau)$ be an incompatibility datum for $G$ where $k$ is $\bbR$ or $\mathbb{C}$. Suppose that $\bfS$ is the Zariski closure in $\bfH$ of the preimage of $\tau(P)$ under the map $N_{\bfH}(\bfJ)(k) \to N_{\bfH}(\bfJ)/\bfJ(k)$. Then $\bfS$ is not reductive. In particular, if $\bfH$ is reductive then $\bfS$ is contained in a proper parabolic subgroup of $\bfH$.
\end{lemma}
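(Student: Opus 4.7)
The plan is to assume $\bfS$ is reductive and derive a contradiction with the hypothesis that $\tau(U')$ and $\tau(U)$ have distinct Zariski closures; in fact I will force $\tau|_U$ to be trivial, which is stronger than needed.

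First, since $\bfS \leq N_\bfH(\bfJ)$, the subgroup $\bfJ$ is normal in $\bfS$. In characteristic zero a normal subgroup of a reductive $k$-group is reductive (its unipotent radical would be a normal unipotent subgroup of $\bfS$, hence trivial), and the quotient of a reductive group by a reductive normal subgroup is reductive, so $\bfS/\bfJ$ is reductive and coincides with the Zariski closure of $\tau(P)$ inside $\bfN := N_\bfH(\bfJ)/\bfJ$. Let $\bfV \leq \bfS/\bfJ$ denote the Zariski closure of $\tau(U)$ in $\bfN$. Because $U$ is normal in $P$, the subgroup $\bfV$ is normal in $\bfS/\bfJ$. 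Since $\tau(U)$ is a continuous image of the Heisenberg group it is nilpotent, and nilpotency passes to Zariski closures because the vanishing of iterated commutators is a closed condition, so $\bfV$ is a nilpotent $k$-algebraic group. The unipotent radical $R_u(\bfV^0)$ is characteristic in $\bfV^0$ and therefore a normal unipotent subgroup of the reductive group $\bfS/\bfJ$, so it is trivial. Hence $\bfV^0$ is nilpotent and reductive, i.e., a torus.

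Next I would invoke the scaling action of $A$ on $U$ recorded in \S\ref{subsec:subgroups}: for any $\lambda \in A^0 \cong \bbR^*_+$ and $v \in U/Z \cong \bbC^{n-1}$, conjugation by $\lambda$ sends $v$ to $\lambda v$. The Zariski closure $\bfA'$ of $\tau(A^0)$ is a connected $k$-subgroup of $\bfS/\bfJ$ that normalizes $\bfV^0$, hence it induces an algebraic homomorphism $\bfA' \to \Aut(\bfV^0)$. The automorphism group scheme of a torus is \'etale (its $k$-points embed in $\GL_d(\bbZ)$), and a connected algebraic group admits no nontrivial homomorphism into a discrete group, so $\bfA'$ centralizes $\bfV^0$. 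Because $\tau(U)$ is connected it lies in $\bfV^0(k)$, which is abelian; hence $\tau([U,U]) = \tau(Z) = \{e\}$ and $\tau|_U$ descends to a continuous homomorphism $U/Z \to \bfV^0(k)$. For every $v \in U/Z$ and $\lambda \in A^0$ this yields
\[
\tau(v) \;=\; \tau(\lambda)\,\tau(v)\,\tau(\lambda)^{-1} \;=\; \tau(\lambda v) \;=\; \tau(v)^{\lambda},
\]
where the final equality holds since $t \mapsto \tau(t v)$ is a continuous one-parameter subgroup of the abelian Lie group $\bfV^0(k)$. Taking $\lambda = 2$ gives $\tau(v) = e$ for every $v$, so $\tau|_U$ is trivial and the Zariski closures of both $\tau(U)$ and $\tau(U')$ coincide with $\bfJ$, contradicting the definition of an incompatibility datum. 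The ``in particular'' assertion follows from Borel--Tits theory, since a $k$-subgroup of a reductive $k$-group whose identity component has nontrivial unipotent radical is contained in a proper parabolic subgroup.

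The main technical hurdle will be the identification of $\bfV^0$ as a torus, which requires carefully combining the normality of $\bfV$ in $\bfS/\bfJ$ with the structure theory of nilpotent $k$-groups in characteristic zero. The ensuing continuous-into-discrete rigidity for the $\tau(A^0)$-action on $\bfV^0$ is then the lever that converts reductivity of $\bfS$ into the collapse $\tau|_U \equiv e$ and hence into the contradiction with the incompatibility datum.
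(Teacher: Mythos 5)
Your argument is correct, but it proceeds along a genuinely different route from the paper's. The paper argues at the Lie algebra level: assuming $\fraks$ reductive, it uses only that $[P,U]=U$ and that $\tau(U)$ is nontrivial to produce, via a direct complement $\fraki$ to the ideal $\frakj$ in $\fraks$, a nontrivial \emph{noncentral} nilpotent ideal of $\fraks$ coming from the image of $\fraku$ --- impossible in a reductive Lie algebra. You instead stay at the group level and exploit the specific structure of the minimal parabolic of $\SU(n,1)$: normality of $\overline{\tau(U)}$ in the reductive closure of $\tau(P)$ forces its identity component to be a torus, rigidity of tori (a connected group normalizing a torus centralizes it) makes $\tau(A^0)$ commute with $\tau(U)$, and then the scaling action of $A$ on $U/Z\cong\bbC^{n-1}$ together with $[U,U]=Z$ collapses $\tau|_U$ entirely, contradicting the incompatibility datum. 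Your route proves more (triviality of $\tau|_U$, not just a contradiction) and avoids passing to $\bbC$ and to Lie algebras, at the cost of invoking torus rigidity, the fact that Zariski closures of nilpotent subgroups are nilpotent, and the explicit Heisenberg/scaling structure of $P$; the paper's argument is shorter and works verbatim for any group with $[P,U]=U$, which is why it is phrased without reference to the fine structure of $U$. Two cosmetic points: since $\tau$ lands in $N_{\bfH}(\bfJ)/\bfJ(k)$, when $\tau|_U$ is trivial the Zariski closures of $\tau(U)$ and $\tau(U')$ are the trivial subgroup of $N_{\bfH}(\bfJ)/\bfJ$ (not $\bfJ$) --- the contradiction is the same; and forming $\bfS/\bfJ$ strictly requires $\bfJ\le\bfS$, which is only guaranteed for $\bfJ^0$, but this is harmless since you can (and implicitly do) work with the image of $\bfS$ in $N_{\bfH}(\bfJ)/\bfJ$, which is reductive and equals the Zariski closure of $\tau(P)$; the paper's own step $\frakj\trianglelefteq\fraks$ glosses over the same point. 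Your appeal to Borel--Tits for the ``in particular'' clause matches the paper's.
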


\begin{proof}
We first observe that if $\bfH$ is reductive and $\bfS$ is not reductive then $\bfS$ is contained in a proper parabolic subgroup of $\bfH$ by \cite[\S3]{BorelTits}. It then suffices to show that $\bfS$ is not reductive.

By extending scalars if necessary, we can assume that $k=\mathbb{C}$. We identify algebraic groups with their $\mathbb{C}$-points, writing $H$ for $\bfH(\mathbb{C})$ with similar notation for the other groups. We assume that $S$ is reductive and will prove that its Lie algebra has a noncentral nilpotent ideal, which is a contradiction.

Incompatibility of $\tau$ implies that $\tau(U)$ is nontrivial, since $\tau(U^\prime)$ must be a proper subgroup of $\tau(U)$. Note that $[P,U]=U$, thus $\tau(U)$ is a noncentral normal nilpotent subgroup of $\tau(P)$. We now consider $G$, $H$, and their subgroups as Lie groups, $\tau$ as a morphism of Lie groups, and denote their Lie algebras by the corresponding Gothic letters. Since $\mathfrak{s}$ is reductive, the ideal $\mathfrak{j}\trianglelefteq\mathfrak{s}$ has a direct complement $\mathfrak{i}\trianglelefteq\mathfrak{s}$, i.e., $\mathfrak{s}\cong \mathfrak{i}\oplus \mathfrak{j}$. The image of the composition
\[
\mathfrak{u} \to \mathfrak{s}/\mathfrak{j} \to \mathfrak{i} \to \mathfrak{s}
\]
defines a nontrivial noncentral nilpotent ideal in $\mathfrak{s}$, giving the desired contradiction and thus proving the lemma.
\end{proof}

To prove Theorem~\ref{thm:SRht}(1) we will be concerned exclusively with the case $k=\mathbb{R}$ and $\bfH(\bbR)=\PU(r,s)$ with $r+s=n+1$. For the proof of Theorem \ref{thm:Siu}, we will care about the case $r+s< n+1$. In particular, we will need the following technical result.

\begin{prop}\label{prop:comppu}
Let the tuple $(k,\bfH,\bfJ,U^\prime,\tau)$ be an incompatibility datum for $G$ with $k=\mathbb{R}$ and $\bfH=\PU(r,s)$ where $r+s \le n+1$, $r\geq s \ge 1$. Then
\begin{enumerate}
\item $r = n$ and $s=1$, i.e., $\bfH=\PU(n,1)$, and
\item $\bfJ$ is the center of the unipotent radical of a proper parabolic subgroup of $\bfH$ and $\tau$ has one-dimensional kernel.
\end{enumerate}
In particular, $\bfH=\PU(r,s)$ is compatible with $G$ for either $r+s < n+1$ or $s>1$.
\end{prop}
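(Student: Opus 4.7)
The proof combines Lemma~\ref{lem:Spar} and Lemma~\ref{lem:normalinP} with a dimension count and an analysis of parabolic subgroups of $\PU(r,s)$. Since $\bfH=\PU(r,s)$ is reductive, Lemma~\ref{lem:Spar} places $\bfS$ inside a proper parabolic subgroup $\bfQ\leq\bfH$ and asserts that $\bfS$ is not reductive. Incompatibility forces $\tau(U)\neq\{1\}$, since otherwise $\overline{\tau(U')}^Z=\{1\}=\overline{\tau(U)}^Z$ for every $U'<U$, so $\tau$ would be compatible. Hence $U\not\leq\ker\tau$, and Lemma~\ref{lem:normalinP} gives $\ker\tau\leq CZ$, so $\dim\ker\tau\leq 1$. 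Moreover $s\geq 1$: if $s=0$, then $\bfH(\bbR)=\PU(r)$ is compact, forcing the subgroup $\bfS\subseteq\bfH$ to be itself compact and hence reductive, contradicting Lemma~\ref{lem:Spar}.

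A dimension count then pins down $(r,s)$ and $\bfQ$. Since $\dim\tau(P)\geq\dim P-1=n^2$ and $\tau(P)\subseteq\bfS/\bfJ$, we have $\dim\bfQ\geq\dim\bfS\geq n^2$. The largest proper parabolic of $\PU(r,s)$ is the stabilizer of an isotropic line, with dimension $((r+s)-1)^2+1$. Combined with $r+s\leq n+1$, this forces $r+s=n+1$ and $\bfQ$ to be conjugate to this stabilizer, so $\dim\bfQ=n^2+1$, with Levi decomposition $\bfQ=\bfL\ltimes\bfV$, $\bfL\cong\bbC^*\cdot\U(r-1,s-1)$, and $\bfV$ a Heisenberg group of real dimension $2n-1$ with $\dim Z(\bfV)=1$. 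To force $s=1$, note that $M\cap\ker\tau\subseteq M\cap CZ=C$ is finite, so $\tau(M)$ is a compact subgroup of $\bfS/\bfJ$ of real dimension $(n-1)^2$, and therefore sits (up to conjugation) in a maximal compact subgroup of $\bfQ$, namely $\U(1)\cdot(\U(r-1)\times\U(s-1))$, of dimension $1+(r-1)^2+(s-1)^2$. The inequality $(n-1)^2\leq 1+(r-1)^2+(s-1)^2$ combined with $(r-1)+(s-1)=n-1$ simplifies to $2(r-1)(s-1)\leq 1$; given $r\geq s\geq 1$ and $n\geq 2$ this forces $s=1$, $r=n$, so $\bfH=\PU(n,1)$.

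Finally we identify $\bfJ$ and show $\dim\ker\tau=1$. The $0$-dimensional kernel case is ruled out first: if $\dim\ker\tau=0$, then $\dim\tau(P)=n^2+1=\dim\bfQ$, which forces $\bfS=\bfQ$ and $\dim\bfJ=0$. The finite group $\bfJ$ would then be normalized by the connected group $\bfQ$, hence central in $\bfQ$; but $Z(\bfQ)=\{1\}$ in $\PU(n,1)$ by the faithfulness of the $\bfL$-action on $\bfV$ together with the triviality of $Z(\bfH)$, a contradiction. Thus $\dim\ker\tau=1$, and Proposition~\ref{prop:autP} identifies $\tau$, up to an inner automorphism and complex conjugation, with the natural quotient $P\to P/CZ=\bar P$; in particular $\tau(P)=\bar P$. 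Since $\bar P$ is an $\bbR$-algebraic group coinciding with its own Zariski closure, we obtain $\bfS/\bfJ=\bar P$ and hence $\dim\bfS=n^2+\dim\bfJ\leq n^2+1$. Under the natural identification $\bar P\cong\bfQ/Z(\bfV)$, the classification of $\bfL$-invariant subgroups of $\bfV$ (leveraging irreducibility of the $\bfL$-action on $\bfV/Z(\bfV)\cong\bbC^{n-1}$ and triviality of $Z(\bfH)$) forces $\bfJ=Z(\bfV)$ and $\bfS=\bfQ$, completing the proof. The principal obstacle is this last step, namely ruling out the possibility that $\bfJ$ is finite in the $1$-dimensional kernel case and correctly aligning the embedding $\bar P\hookrightarrow\bfQ/\bfJ$ with the Proposition~\ref{prop:autP} structure.
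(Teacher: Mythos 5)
Your proof of part (1) has a genuine gap. The reduction rests on the claim that the stabilizer of an isotropic line is the largest proper parabolic of $\PU(r,s)$, and that is false once $s\geq 2$: for the stabilizer $Q_d$ of a $d$-dimensional totally isotropic subspace one has $\dim Q_d=(r+s)^2-1-2d(r+s)+3d^2$, which for fixed $r+s$ is maximized at $d=1$ \emph{or} $d=s$. For $\PU(2,2)$ the Siegel parabolic has dimension $11$ while the line stabilizer has dimension $10$. Consequently the bound $\dim \bfQ\geq n^2$ does not force $r+s=n+1$ with $\bfQ$ a line stabilizer: with $n=3$ and $\bfH=\PU(2,2)$ the Siegel parabolic has dimension $11\geq 9=n^2$, and with $n=4$ and $\bfH=\PU(3,2)$ the stabilizer of an isotropic plane has dimension $16=n^2$ (similarly $\PU(3,3)$ for $n=5$). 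Your subsequent compact-subgroup count presupposes that the Levi is $\bbC^*\cdot\U(r-1,s-1)$, so these cases are simply not addressed. The paper's proof avoids this by working with an arbitrary parabolic $Q$ (stabilizer of a $d$-dimensional isotropic subspace), lifting $\tau(M)$ to a compact subgroup of $S$ locally isomorphic to $\U(n-1)$, placing it inside the maximal compact $\mathrm{P}(\U(d)\times\U(r-d)\times\U(s-d))$ of a Levi factor, and using almost simplicity of the $\SU(n-1)$ factor to force $n-1\leq d$, hence $d=s=r=n-1$, a contradiction. (Your compact count would in fact dispose of the $\PU(2,2)$ case, whose Siegel parabolic has maximal compact of dimension $3<4=(n-1)^2$, but only if it is run for general $d$, which is exactly what the paper does.)

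In part (2) you flag the key missing step yourself, and it is indeed missing: when $\dim\ker\tau=1$ you must still rule out $\bfJ$ finite, and that is where the paper's real work lies. There, a finite $\bfJ$ pulls back under $\theta$ to a compact subgroup of $P$, which commutes with $Z$ by Lemma~\ref{lem:Zcent}; hence $\conj{Z}=\theta(Z)$ lies in $N=N_Q(J)$, and the quotient $\pi:N/J\to N/J\conj{Z}$ together with Lemma~\ref{lem:normalinP} (bounding $\ker(\pi\circ\tau)$) yields a contradictory chain of dimension inequalities; this is what forces $\dim(N)=\dim(Q)$, $N=Q$, and then $\dim\bfJ=1$. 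Moreover, your appeal to Proposition~\ref{prop:autP} at this stage is circular: that proposition concerns homomorphisms $P\to P/CZ$, and you cannot identify the codomain $N_\bfH(\bfJ)/\bfJ(k)$ with $P/CZ\cong Q/\conj{Z}$ until you already know $\bfJ=\conj{Z}$; the paper never uses it in this proof (it enters only later, in Proposition~\ref{prop:SRJZ}). Your treatment of the zero-dimensional kernel case (where $S=Q$ forces a finite $\bfJ$ to be central, hence trivial) is correct and mirrors the paper's observation that $Q$ has trivial center, but the concluding identification of $\bfJ$ with the center of the unipotent radical is only sketched; in the paper it follows cleanly by pulling $\bfJ$ back under $\theta$, applying Lemma~\ref{lem:normalinP} to get $\theta^{-1}(J)\leq CZ$, and using connectedness of $\conj{Z}$.
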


\begin{proof}
We work directly with real points of real algebraic groups and abandon the bold face notation. We again let $S$ denote the Zariski closure of the preimage of $\tau(P)$ under the map $N_H(J)\to N_H(J)/J$. Lemma~\ref{lem:Spar} implies that $S \le N_H(J)$ is contained in some maximal proper parabolic subgroup $Q < H$. Set $N=N_Q(J)$ and note that $S \le N \le Q$. Restricting the codomain, we consider $\tau$ as a map from $P$ to $N/J \le N_H(J) / J$.

The group $\U(r,s)$ acts on $\mathbb{C}^{r+s}$ preserving the hermitian form
\[
h_{r,s}(x_1,\dots,x_{n+1}) = \sum_{i=1}^r |x_i|^2 - \sum_{i=1}^s |x_{r+i}|^2,
\]
and $Q$ is the projectivization of the stabilizer in $\U(r,s)$ of a totally isotropic complex subspace $V \subset \mathbb{C}^{r+s}$. Let $d=\dim_\mathbb{C}(V)$ and note that $d\leq s$,
as $s$ is the dimension of a maximal $h_{r,s}$-isotropic subspace $\mathbb{C}^{r+s}$.
A parabolic subgroup of $H$ is conjugate to $Q$ if it is the stabilizer of a $d$-dimensional totally isotropic subspace. A parabolic subgroup is opposite to $Q$ if it is the stabilizer of a maximal totally isotropic subspace $V^\prime$ disjoint from $V$ such that the restriction of $h_{r,s}$ to $V \oplus V^\prime$ is nondegenerate.

A Levi factor of $Q$ is obtained by intersecting $Q$ with an opposite parabolic subgroup $Q^\prime$. If $V^\prime$ is the totally isotropic subspace associated with $Q^\prime$, then $L=Q\cap Q^\prime$ stabilizes the subspaces $V$, $V^\prime$ and $(V\oplus V^\prime)^\perp$, and these subspaces form a direct sum decomposition of $\mathbb{C}^{r+s}$. Taking $V\oplus V^\prime$ and $(V\oplus V^\prime)^\perp$ with the induced forms, we identify $\GL(V)$ with the stabilizer of $V$ in $\U(V\oplus V^\prime)$ under the action $T(v,v^\prime)=(Tv,(\conj{T}^{-1}){}^*v^\prime)$. This leads to the identification
\[
L\cong \mathrm{P}(\GL(V)\times \U((V\oplus V')^\perp)) \cong \mathrm{P}(\GL_d(\bbC)\times \U(r-d,s-d)),
\]
and we see that $K \cong \mathrm{P}(\U(d) \times \U(r-d) \times \U(s-d))$ is a maximal compact subgroup of $L$.

We use the notation introduced in \S\ref{subsec:subgroups} and recall that $M<P$ is locally isomorphic to $\U(n-1)$. The map $S\to S/J$ is real algebraic, thus the compact subgroup $\tau(M)<S/J$ has a compact lift $\wt{M}<S$ such that the map $S\to S/J$ restricts to a local isomorphism $\wt{M}\to \tau(M)$. Incompatibility of $\tau$ means that $U$ cannot be in the kernel of $\tau$, and it follows from Lemma~\ref{lem:normalinP} that $\ker(\tau) \le CZ$. In particular, $\tau$ is almost injective on $M$.

We therefore get that $\wt{M}$ is locally isomorphic to $\tau(M)$, which is locally isomorphic to $M$, therefore $\wt{M}$ is locally isomorphic to $\U(n-1)$. Let $L$ be a Levi factor of $Q$ that contains $\wt{M}$ and $K_0 < L$ be a maximal compact subgroup containing $\wt{M}$. We conclude that $d$, $s$, $r$ and $n$ are parameters satisfying $1\leq d \leq s \leq r \leq n$ and $r+s\leq n+1$ such that the group $\mathrm{P}(\U(d) \times \U(r-d) \times \U(s-d))$ contains a subgroup locally isomorphic to $\U(n-1)$.

We claim that $r = n$, which will imply $s=1$ and will prove (1). Assume for contradiction $r\leq n-1$. Note that the possibility $d=s=r$ is excluded. Indeed, in this case $\mathrm{P}(\U(d) \times \U(r-d) \times \U(s-d))=\PU(d)$ does not contain a subgroup locally isomorphic to $\U(n-1)$ by dimension considerations, as $d\leq r \leq n-1$. If $n=2$ we necessarily have $1= d = s = r$, which gives a contradiction. We thus have $n\geq 3$. Thus the commutator subgroup of the subgroup of $\mathrm{P}(\U(d) \times \U(r-d) \times \U(s-d))$ which is locally isomorphic to $\U(n-1)$ is almost simple, hence it projects almost injectively to one of the groups $\PU(d)$, $\PU(r-d)$, or $\PU(s-d)$. Since this commutator subgroup is locally isomorphic to $\SU(n-1)$, it follows by dimension considerations that $n-1$ is less than or equal to one of the three numbers $d$, $r-d$, or $s-d$. Since $s-d \leq r-d \leq n-2$ we must have that $n-1\leq d$ and we conclude that $d=s=r=n-1$. This gives a contradiction. Hence we must have $r=n$, completing the proof of (1).

%
%
%

We now have that $H=\PU(n,1)$. In particular, we have a natural surjection
\[
G=\SU(n,1) \to G/C \cong \PU(n,1)=H,
\]
with finite kernel. Let $P^\prime$ be the preimage of $Q$ under this surjection. Then $P'<G$ is a proper parabolic subgroup, and hence is conjugate to $P$. Precomposing the above map with the corresponding conjugation we get a surjection $\theta:G\to H$ such that $P=\theta^{-1}(Q)$ and $C=\theta^{-1}(e)$.

Therefore $P$ is locally isomorphic to $Q$ and $\dim(Q)=\dim(P)$. Note that $\tau$ factors through an injection $P/\ker(\tau) \to N/J = N_Q(J) / J$. Recalling that $\ker(\tau) \le CZ$, we have that $\dim(\ker(\tau))\leq 1$. We then have the following chain of inequalities
\begin{align}\label{dimN}
\dim(P)-1 &\leq \dim(P)-\dim(\ker(\tau)) = \dim(P/\ker(\tau)) \\
&\leq \dim(N)-\dim(J) = \dim(N/J) \nonumber \\
&\leq \dim(Q)-\dim(J) \nonumber \\
&= \dim(P)-\dim(J) \nonumber
\end{align}
that all follow easily from the above observations.

We claim that $\dim(N)=\dim(Q)$. If not, then $\dim(N) < \dim(Q)$ and we see from \eqref{dimN} that $\dim(P)-1 < \dim(P) - \dim(J)$ so $\dim(J)=0$. In other words, $J<Q$ is finite, and so $\theta^{-1}(J)<P$ is also finite, hence compact. Then $\theta^{-1}(J)$ commutes with $Z<P$ by Lemma~\ref{lem:Zcent}. Setting $\conj{Z}=\theta(Z)$, $J$ commutes with $\conj{Z} < Q$, and in particular $\conj{Z} \le N$.

Since $Z\lhd P$ is normal, $\conj{Z}\lhd Q$ is normal and hence $J\conj{Z}\lhd N$ is normal. Consider the natural map $\pi:N/J\to N/J\conj{Z}$. Since $Z$ is one-dimensional, $\conj{Z}$ is one-dimensional, and so $\ker(\pi) = J\conj{Z}/J$ is one-dimensional. Recall that $\ker(\tau) \le CZ$, so it is at most one-dimensional. We conclude that $\ker(\pi\circ\tau)$ is at most two-dimensional. However, $\dim(U)\geq 3$, thus $\ker(\pi\circ\tau) \le CZ$ by Lemma~\ref{lem:normalinP}, and in particular $\ker(\pi\circ\tau)$ is at most one-dimensional.

Considering the injective map $P/\ker(\pi\circ \tau) \to N/J\conj{Z}$, we obtain the contradictory chain of inequalities:
\begin{align*}
\dim(P)-1 &\leq \dim(P)-\dim(\ker(\pi\circ \tau))=\dim(P/\ker(\pi\circ \tau)) \\
&\leq \dim(N)-\dim(J\conj{Z}) = \dim(N/J\conj{Z}) \\
&= \dim(N)-1 \\
&< \dim(Q)-1 \\
&=\dim(P)-1
\end{align*}
Therefore $\dim(N) = \dim(Q)$ as claimed.

We now have that $N \le Q$ and $\dim(N)=\dim(Q)$. Since $Q$ is Zariski connected (again, see \cite[\S 23.1]{Humphreys}), we conclude that $N=Q$ and $J\lhd Q$ is normal. From Equation \eqref{dimN} we get that $\dim(J)\leq 1$. If $J$ was finite, it would be central in $Q$, since $J\lhd Q$ is normal, $Q$ is Zariski connected, and any discrete normal subgroup of a connected group is central. However $Q$ has trivial center and $J$ is nontrivial by the incompatibility assumption, thus we have that $\dim(J) = 1$.

Then $\theta^{-1}(J)$ is normal in $P$ and one-dimensional. Lemma~\ref{lem:normalinP} implies that $\theta^{-1}(J) \le CZ$ and we conclude that $J \le \conj{Z}=\theta(CZ)$. Since $\dim(J)=1$ and $\conj{Z}\cong Z\cong \mathbb{R}$ is connected, we must have $J=\conj{Z}$. Noting that $Q<H$ is a proper parabolic subgroup and $\conj{Z}<Q$ is the center of its unipotent radical, we have shown that $J$ is indeed the center of the unipotent radical of a proper parabolic subgroup of $H$. Finally, using the fact that $\dim(J)=1$ we get from Equation \eqref{dimN} that $\dim(\ker(\tau))=1$. This completes the proof of (2).
\end{proof}

\section{Preliminaries for the proof of Theorem~\ref{thm:SR}}\label{sec:map}

\subsection{Existence of equivariant maps}\label{ssec:FindMap}

Assume that $k$ is a local field and $\bfH$ is a connected adjoint $k$-algebraic group. Let $\Gamma<G$ be a lattice and consider a homomorphism $\rho : \Gam \to \bfH(k)$ with unbounded, Zariski dense image. Assume that $W<G$ is a closed noncompact subgroup for which there exists a faithful irreducible representation of $\bfH(k)$ on a $k$-vector space $V$ of dimension at least two and a $W$-invariant ergodic measure $\nu$ on the bundle $(G \times \bbP(V)) / \Gam$ that projects to Haar measure on $G / \Gam$, where the action of $W$ is induced by the left action of $W$ on $G$. The proof of the following result is exactly the same as the proof of \cite[Prop.\ 4.1]{BFMS}.

\begin{prop}\label{prop:map}
Under the assumptions above, there is a proper noncompact $k$-algebraic subgroup $\bfL < \bfH$ and a measurable $W$-invariant and $\Gam$-equivariant map $\phi : G \to \bfH / \bfL(k)$. We can also view $\phi$ as a measurable $\Gam$-map from $W \bs G$ to $\bfH / \bfL(k)$.
\end{prop}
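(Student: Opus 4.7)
The plan is to follow the standard Furstenberg--Zimmer scheme, exactly as in the real hyperbolic version \cite[Prop.\ 4.1]{BFMS}. First I would disintegrate $\nu$ along the projection $(G \times \bbP(V))/\Gam \to G/\Gam$; since $\nu$ pushes forward to Haar measure this is legitimate, and it yields a measurable map
\[ \phi_0 : G \longrightarrow \mathrm{Prob}(\bbP(V)) \]
that is $W$-invariant for the left action and $\Gam$-equivariant via $\rho$, sending a.e.\ $g \in G$ to the conditional measure of $\nu$ over the fiber at $g\Gam$.

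The second step is to pass from $\mathrm{Prob}(\bbP(V))$ to a homogeneous space of $\bfH$. By the classical theorem of Furstenberg, for each $\mu \in \mathrm{Prob}(\bbP(V))$ the Zariski closure $\bfL_\mu < \bfH$ of the stabilizer of $\mu$ in $\bfH(k)$ is a $k$-algebraic subgroup depending measurably on $\mu$. Composing with $\phi_0$ yields a measurable $\Gam$-equivariant, $W$-invariant assignment $g \mapsto \bfL_g$ into the countable set of $\bfH(k)$-conjugacy classes of $k$-algebraic subgroups of $\bfH$. The $W$-ergodicity of $\nu$ forces the conjugacy class of $\bfL_g$ to be a.e.\ constant, and a standard measurable selection then produces a single $k$-algebraic subgroup $\bfL < \bfH$ together with the desired measurable $\Gam$-equivariant, $W$-invariant map $\phi : G \to \bfH/\bfL(k)$.

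To conclude I must verify that $\bfL$ is both proper and noncompact. If $\bfL = \bfH$, then $\phi_0(g)$ would be $\bfH(k)$-invariant; since $\bfH$ acts faithfully and irreducibly on $V$ with $\dim V \ge 2$, Furstenberg's lemma on measures supported on unions of proper projective subspaces rules this out, so $\bfL$ is proper. If $\bfL$ were compact, the $\Gam$-equivariance of $\phi$ combined with Zariski density of $\rho(\Gam)$ would force $\rho(\Gam)$ itself to be precompact in $\bfH(k)$ via a Borel-density-type argument, contradicting the unboundedness hypothesis. The main obstacle is the technical step of passing from a measurable assignment of algebraic hulls to a single algebraic subgroup $\bfL$ with an honest $\Gam$-equivariant map to its coset space; this, together with the verifications of properness and noncompactness above, is carried out in detail in \cite[Prop.\ 4.1]{BFMS}, which is why the authors point the reader directly there.
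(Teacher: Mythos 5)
Your overall strategy (disintegrate $\nu$, pass to the space of projective measures, use ergodicity to land in a single $\bfH(k)$-orbit, and defer the technical heart to \cite[Prop.\ 4.1]{BFMS}) is indeed the route the paper takes --- its proof is literally a citation of that proposition. However, the reductions you sketch contain steps that would fail as written. First, the set of $\bfH(k)$-conjugacy classes of $k$-algebraic subgroups of $\bfH$ is not countable in general (already families of unipotent subgroups can have positive-dimensional moduli), so you cannot conclude essential constancy of $g\mapsto \bfL_g$ this way; the correct tool is Furstenberg--Zimmer smoothness of the $\bfH(k)$-action on $\mathrm{Prob}(\bbP(V))$, which makes the orbit space standard Borel, so that the $W\times\Gam$-invariant composition $G\to \mathrm{Prob}(\bbP(V))/\bfH(k)$ is essentially constant and the essential image of $\phi_0$ lies in a single orbit $\bfH(k)/\mathrm{Stab}(\mu_0)$. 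Second, your properness argument is a non sequitur: if the Zariski closure of the stabilizer of $\mu$ is all of $\bfH$, it does not follow that $\mu$ is $\bfH(k)$-invariant, because over $k=\bbC$ or nonarchimedean $k$ a compact subgroup (e.g.\ a maximal compact) can be Zariski dense. Passing to Zariski closures thus conflates the two cases of Furstenberg's lemma, which is the dichotomy one must actually use: either $\mathrm{Stab}_{\bfH(k)}(\mu_0)$ is compact, or $\mu_0$ is supported on a finite union of proper projective subspaces, in which case the stabilizer of the minimal such union is a proper $k$-algebraic subgroup $\bfL$ (properness by irreducibility and connectedness of $\bfH$) which contains the noncompact stabilizer, so noncompactness of $\bfL$ comes for free and one composes $\bfH(k)/\mathrm{Stab}(\mu_0)\to \bfH/\bfL(k)$ to get $\phi$.

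Third, and most seriously, the compact-stabilizer case is exactly where the hypothesis that $\rho(\Gam)$ is unbounded must do real work, and ``a Borel-density-type argument forces $\rho(\Gam)$ to be precompact'' is not an argument. A measurable $\Gam$-equivariant map from $W\bs G$ --- an infinite-measure space on which the $\Gam$-action is only quasi-invariant --- to $\bfH(k)/\bfL(k)$ with $\bfL(k)$ compact does not by itself produce a $\rho(\Gam)$-invariant finite measure, a fixed point, or any boundedness of $\rho(\Gam)$; note that if $\rho(\Gam)$ were bounded such maps genuinely exist, so any correct exclusion of this case must quantitatively use unboundedness together with the structure of $\nu$ (this is the content of the argument in \cite[Prop.\ 4.1]{BFMS}, not a routine Borel density application). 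As it stands, your write-up identifies the right skeleton but leaves precisely the two points where the hypotheses of irreducibility/faithfulness and unboundedness enter --- the Furstenberg dichotomy and the exclusion of compact stabilizers --- without a valid justification.
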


\subsection{Algebraic representations}\label{ssec:BaderFurman}

Here we recall the work of Bader and Furman \cite{BaderFurman} that will be used in the proof of Theorem~\ref{thm:SR}. We also refer to \cite[\S4.2]{BFMS} for further discussion. We again assume that $k$ is a local field and $\bfH$ is a connected adjoint $k$-algebraic group.

We fix a lattice $\Gamma<G$ and a Zariski dense representation $\rho : \Gam \to \bfH(k)$. For a closed subgroup $T < G$, a \emph{$T$-algebraic representation of $G$} consists of:
\begin{itemize}

\item a $k$-algebraic group $\bfI$,

\item a $k$-$(\bfH \times \bfI)$-algebraic variety $\bfV$ that is a left $\bfH$-space and a right $\bfI$-space on which the $\bfI$-action is faithful,

\item a Zariski dense homomorphism $\tau : T \to \bfI(k)$,

\item an \emph{algebraic representation} of $G$ on $\bfV$, by which we mean an almost-everywhere defined measurable map $\phi : G \to \bfV(k)$ such that
\[
\phi(t g \gam^{-1}) = \rho(\gam) \phi(g) \tau(t)^{-1}
\]
for every $\gam \in \Gam$, every $t \in T$, and almost every $g \in G$.
\end{itemize}
The data for a $T$-algebraic representation is denoted by $\bfI_\bfV$, $\tau_\bfV$, and $\phi_\bfV$.

A $T$-algebraic representation is \emph{coset $T$-algebraic representation} when $\bfV$ is the coset space $\bfH / \bfJ$ for some $k$-algebraic subgroup $\bfJ < \bfH$ and $\bfI$ is a $k$-subgroup of $N_\bfH(\bfJ) / \bfJ$, where $N_\bfH(\bfJ)$ is the normalizer of $\bfJ$ in $\bfH$ and $\bfH / \bfJ$ is endowed with the standard right action of $N_\bfH(\bfJ) / \bfJ$. The collection of $T$-algebraic representations of $G$ forms a category. The Howe--Moore theorem implies that if $T$ is noncompact then the $T$-action on $G / \Gam$ is mixing, hence weakly mixing. In this case, by \cite[Thm.\ 4.3]{BaderFurman}, the above category has an initial object which is a coset $T$-algebraic representation.

\begin{defn}\label{def:gate}
Suppose that $G$, $\Gam$, and $T$ are as above. An initial object in the category of $T$-algebraic representations of $G$ is called
the \emph{gate} or, when wishing to emphasize its dependence on $T$, the \emph{$T$-gate}.
\end{defn}
\begin{rem}
As remarked in \cite[Def.\ 4.4]{BaderFurman2} there is not a unique $T$-gate in the category of $T$-algebraic representations. Indeed any conjugate of an initial coset $T$-algebraic representation will yield another $T$-gate (see \cite[Rem. 4.5]{BaderFurman}). The gate is easily seen to be unique up to unique isomorphism and for this reason, and to remain consistent with the discussion in \cite{BaderFurman2}, we continue to refer to such a choice as \emph{the} $T$-gate.
\end{rem}

We recall one last definition from \cite[\S 4.2]{BFMS}. Given closed noncompact subgroups $S, T < G$, consider their gates $\phi_S, \bfV_S$ and $\phi_T, \bfV_T$. We say that these gates \emph{have the same map} if $\bfV_S=\bfV_T$ and $\phi_T, \phi_S$ agree almost everywhere. We then have the following.

\begin{lemma}[cf.~{\cite[Lem.\ 4.4]{BFMS}}] \label{lem:BFMS4.4}
Assume that $T<G$ is a closed noncompact subgroup. Assume $T=T_0\lhd T_1\lhd \cdots \lhd T_n=T^\prime$ is a sequence of subgroups of $G$ such that $T_{i-1}$ is normal in $T_{i}$ for each $i=1,\ldots,n$. Then the gates for $T$ and $T^\prime$ can be chosen to have the same map.
\end{lemma}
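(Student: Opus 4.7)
The plan is to prove the lemma by induction on $n$, reducing to the base case $T \lhd T'$, and then to exhibit the $T$-gate itself as a $T'$-algebraic representation. Note that since $T_0=T$ is noncompact, so is every $T_i$, so all the gates in the chain are well-defined by \cite[Thm.\ 4.3]{BaderFurman}.

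For the inductive step, let $\phi_T:G\to \bfV_T=\bfH/\bfJ$ be the $T$-gate with homomorphism $\tau_T:T\to (N_\bfH(\bfJ)/\bfJ)(k)$. For each $t'\in T'$, define $\phi_T^{t'}(g):=\phi_T((t')^{-1}g)$. Using the normality of $T$ in $T'$ (so that $(t')^{-1}tt'\in T$) together with the equivariance of $\phi_T$, a direct computation gives
\[
\phi_T^{t'}(tg\gamma^{-1}) = \rho(\gamma)\,\phi_T^{t'}(g)\,\tau_T\!\left((t')^{-1}tt'\right)^{-1}, \qquad t\in T,\ \gamma\in \Gam,
\]
so that $\phi_T^{t'}$ is again a $T$-algebraic representation of $G$ on the same target $\bfV_T$, but with twisted homomorphism $\tau_T\circ \mathrm{Ad}((t')^{-1})$. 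By initiality of the $T$-gate, this new representation factors through $\phi_T$ via an essentially unique left $\bfH$-equivariant $k$-morphism $\bfV_T\to \bfV_T$; because $\bfV_T=\bfH/\bfJ$ is left $\bfH$-homogeneous, every such self-morphism is right multiplication by a unique element $\sigma(t')\in (N_\bfH(\bfJ)/\bfJ)(k)$.

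Uniqueness of $\sigma(t')$ forces $\sigma:T'\to (N_\bfH(\bfJ)/\bfJ)(k)$ to be a (continuous) group homomorphism extending $\tau_T$. Letting $\bfI'$ be the Zariski closure of its image, the data $(\bfV_T,\bfI',\phi_T,\sigma)$ constitute a $T'$-algebraic representation of $G$ whose underlying map is $\phi_T$ itself. Initiality of the $T'$-gate then yields a $k$-morphism $\bfV_{T'}\to \bfV_T$ intertwining the two maps; conversely, restricting $\phi_{T'}$ to $T$ produces a $T$-algebraic representation, which by initiality of $\phi_T$ gives a $k$-morphism $\bfV_T\to \bfV_{T'}$. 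Applying uniqueness to the two compositions shows these morphisms are mutually inverse isomorphisms, so we may identify $\bfV_{T'}$ with $\bfV_T$ and conclude $\phi_{T'}=\phi_T$ almost everywhere. Iterating the conclusion along the chain $T=T_0\lhd T_1\lhd\cdots\lhd T_n=T'$ completes the proof.

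The main technical obstacle I expect is verifying that $\sigma$ really is a continuous homomorphism into the $k$-points of $N_\bfH(\bfJ)/\bfJ$, rather than just a set-theoretic map: continuity comes from the measurable structure on $\phi_T$, and the homomorphism property follows by applying the uniqueness in the initial object property to the composition $\phi_T^{t_1' t_2'}=(\phi_T^{t_2'})^{t_1'}$. This is essentially the argument of \cite[Lem.\ 4.4]{BFMS}, carried out here in the chain setting by induction.
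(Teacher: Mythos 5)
Your argument is correct and is essentially the paper's: reduce by induction to the adjacent case $T\lhd T'$, equip the $T$-gate with a $T'$-structure, and then compare the $T$- and $T'$-gates by initiality in both directions. The only difference is packaging: the paper gets the $T'$-structure by citing \cite[Lem.\ 4.4]{BFMS} (for $T$ versus $N_G(T)$, then restricting along $T'\le N_G(T)$) and finishes with \cite[Cor.\ 4.7]{BaderFurman}, whereas you inline both steps --- your translation/twisting construction of $\sigma$ is exactly the proof of the cited lemma, and your final identification is the content of the cited corollary (strictly, only the composite $\bfV_T\to\bfV_{T'}\to\bfV_T$ is visibly an endomorphism of the $T$-gate in the $T$-category, but that already suffices, since an injective $\bfH$-equivariant map of coset spaces is an isomorphism).
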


\begin{proof}
By induction we can assume that $n=1$, that is $T< T^\prime \le N_G(T)$. Then \cite[Lem.\ 4.4]{BFMS} implies we can assume that the gates for $T$ and $N_G(T)$ have the same map $\phi:G\to \bfH/\bfJ(k)$. Since $\phi$ is an $N_G(T)$-algebraic representation, it is also a $T^\prime$-algebraic representation. Let $\phi^\prime:G\to \bfH/\bfJ^\prime(k)$ be the gate in the category of $T^\prime$-algebraic representations. It follows that we can find an $\bfH$-equivariant $k$-map $\bfH/\bfJ^\prime \to \bfH/\bfJ$. Viewing $\phi^\prime$ as a $T$-algebraic representation, we can also find an $\bfH$-equivariant $k$-map $\bfH/\bfJ \to \bfH/\bfJ^\prime$. By \cite[Cor.\ 4.7]{BaderFurman} we see that the gates for $T$ and $T^\prime$ can indeed be chosen to have the same map.
\end{proof}

\section{The proof of Theorem~\ref{thm:SR}\eqref{thm:SRcomp}}\label{sec:SRcomp}

In this section we state and prove Proposition~\ref{prop:SRincompt} and then use it to prove Theorem~\ref{thm:SR}\eqref{thm:SRcomp}. Proposition~\ref{prop:SRincompt} will also be used in the proof of Theorem~\ref{thm:SR}\eqref{thm:SRpun1} in the next section. Throughout this section we rely on the notation, definitions, and results of the previous sections, particularly the notion of incompatibility given in Definition~\ref{def:comp} and the notion of a gate from Definition~\ref{def:gate}.

\begin{prop} \label{prop:SRincompt}
Let $G$ be $\SU(n,1)$ for some $n \ge 2$, $\Gam < G$ be a lattice, and $W < G$ be a noncompact connected almost simple subgroup. Suppose that $k$ is a local field, $\bfH$ is a connected adjoint $k$-algebraic group, and that $\rho : \Gam \to \bfH(k)$ is a homomorphism with unbounded, Zariski dense image. Assume moreover that there is a faithful irreducible representation of $\bfH(k)$ on a $k$-vector space $V$ of dimension at least two and a $W$-invariant, ergodic measure $\nu$ on $(G \times \bbP(V)) / \Gam$ that projects to Haar measure on $G / \Gam$.

Suppose that $U^\prime<W$ is a nontrivial unipotent subgroup, consider the category of $U^\prime$-algebraic representations of $G$, and let $\Psi : G \to \bfH / \bfJ(k)$ be the corresponding gate, where $\bfJ < \bfH$ is a $k$-algebraic subgroup and $\Psi$ is a measurable map that is $(U^\prime \times \Gam)$-equivariant with respect to a homomorphism $\tau : U^\prime \to N_\bfH(\bfJ) / \bfJ(k)$. Then $\tau$ extends to a continuous homomorphism $\conj{\tau}:P \to N_\bfH(\bfJ) / \bfJ(k)$ with $\conj{\tau}|_{U^\prime}=\tau$ such that $\Psi$ is $(P \times \Gam)$-equivariant with respect to $\conj{\tau}$. Furthermore:
\begin{enumerate}

\item If $\bfJ$ is trivial, then $\rho$ extends to a continuous homomorphism from $G$ to $\bfH(k)$.

\item If $\bfJ$ is nontrivial, then it is an incompatible subgroup of $\bfH$ and $\conj{\tau}$ is an incompatible homomorphism.

\end{enumerate}
\end{prop}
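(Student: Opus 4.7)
The strategy is to propagate the $U^\prime$-gate to a $P$-gate via Lemma~\ref{lem:BFMS4.4}, and in the trivial-$\bfJ$ case to further glue across an opposite parabolic so as to extend all the way to $G$. Conjugating if necessary (which is harmless, as all conclusions are conjugation-equivariant), we may assume $U^\prime \subseteq U$. Nilpotency of $U$ then yields a subnormal refinement
\[
U^\prime = U_0 \lhd U_1 \lhd \cdots \lhd U_r = U,
\]
obtained for instance by intersecting $U^\prime$ with the upper central series of $U$; appending the normality $U \lhd P = MAU$ extends this to a subnormal chain from $U^\prime$ to $P$. Applying Lemma~\ref{lem:BFMS4.4} along this chain, the $P$-gate has the same underlying map $\Psi\colon G \to \bfH/\bfJ(k)$ and carries a Zariski-dense homomorphism $\conj{\tau}\colon P \to N_\bfH(\bfJ)/\bfJ(k)$ for which $\Psi$ is $(P\times \Gamma)$-equivariant. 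Restricting the equivariance to $U^\prime$ and using the faithfulness of the $\bfI$-action built into the definition of an algebraic representation forces $\conj{\tau}|_{U^\prime}=\tau$, and continuity of $\conj{\tau}$ follows from the standard principle that a measurable homomorphism between Polish locally compact groups is continuous.

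For part (1), assume $\bfJ$ is trivial, so $\Psi\colon G \to \bfH(k)$. Noncompact almost simplicity of $W$ supplies a nontrivial unipotent subgroup $U^{\prime\prime}\subseteq W$ that is conjugate into $U^-$, the unipotent radical of an opposite parabolic $P^-$; the hypotheses of the proposition apply equally well to $U^{\prime\prime}$ through the same $W$-invariant measure $\nu$. The same chain argument provides a continuous homomorphism $\conj{\tau}^-\colon P^- \to \bfH(k)$ under which $\Psi$ is $(P^-\times\Gamma)$-equivariant. Since both $\conj{\tau}$ and $\conj{\tau}^-$ implement the equivariance of the same $\Psi$, they agree on $P\cap P^- = MA$, and the Bruhat decomposition $G = \langle P, P^- \rangle$ lets them glue into a continuous homomorphism $\rho^\sharp\colon G \to \bfH(k)$; normalizing by $\Psi(e)$ then identifies $\rho^\sharp|_\Gamma$ with $\rho$, yielding the desired extension.

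For part (2), with $\bfJ$ nontrivial, the subgroup $U^\prime$ itself (viewed inside $U$) serves as a witness for incompatibility. It is a proper subgroup of $U$ because unipotent subgroups of an almost simple proper subgroup $W\subsetneq G$ lie in the unipotent radical of $W\cap P$, which is strictly contained in $U$ (the case $W=G$ reduces to the chain argument applied to $U^\prime = U$ and gives part (1) directly). If the Zariski closures of $\conj{\tau}(U^\prime)$ and $\conj{\tau}(U)$ in $N_\bfH(\bfJ)/\bfJ(k)$ were to coincide, then the full $U$-algebraic content of $\Psi$ would already be encoded by its $U^\prime$-algebraic content, and by initiality of the common gate at the $U^\prime$- and $U$-stages one could then factor $\Psi$ through a strictly smaller quotient $\bfH/\bfJ^\prime$ with $\bfJ \lneq \bfJ^\prime$, contradicting the minimality of $\bfJ$. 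Thus $(k,\bfH,\bfJ,U^\prime,\conj{\tau})$ is an incompatibility datum, so $\conj{\tau}$ is an incompatible homomorphism and $\bfJ$ is an incompatible subgroup.

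The main obstacle is the last paragraph: translating the abstract initiality of the gate into the strict Zariski-closure inequality demanded by Definition~\ref{def:comp} requires careful bookkeeping of how the stabilizer $\bfJ$ is cut out by the $U^\prime$- versus $U$-equivariance of $\Psi$, and ensuring that a hypothetical Zariski-closure equality really does let one shrink $\bfJ$. The gluing step in part (1) is more familiar but still nontrivial, resting on the fact that agreement of $\conj{\tau}$ and $\conj{\tau}^-$ on $MA$ is enough to coherently extend across the Bruhat cells.
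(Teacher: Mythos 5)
Your first step (extending $\tau$ from $U^\prime$ to $P$ by a subnormal chain inside the nilpotent group $U$ followed by $U \lhd P$, then invoking Lemma~\ref{lem:BFMS4.4}) is essentially the paper's argument and is fine. The rest has genuine gaps.

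The serious one is part (2). Your claim that a hypothetical equality of the Zariski closures of $\conj{\tau}(U^\prime)$ and $\conj{\tau}(U)$ would let you ``factor $\Psi$ through a strictly smaller quotient, contradicting the minimality of $\bfJ$'' is not an argument: initiality of the gate gives no such minimality statement, and nothing about closure equality allows you to enlarge $\bfJ$. More tellingly, your argument for (2) never uses the measure $\nu$, the $W$-invariance, or the Zariski density of $\rho(\Gam)$; if incompatibility of a nontrivial $\bfJ$ were a formal consequence of the gate machinery alone, then together with part (1) one would obtain superrigidity for rank-one lattices with no dynamical hypothesis at all, which is false. The actual proof is dynamical: assuming $\conj{\tau}$ compatible, Proposition~\ref{prop:map} produces a $W$-invariant, $\Gam$-equivariant map $\phi : G \to \bfH/\bfL(k)$ with $\bfL$ proper; since $U^\prime < W$, $\phi$ is $U^\prime$-invariant and hence factors through the gate $\Psi$ followed by the quotient by $\overline{\conj{\tau}(U^\prime)} = \overline{\conj{\tau}(U)}$, so $\phi$ is $U$-invariant; as $\langle U, W\rangle = G$, $\phi$ is essentially constant, giving a $\rho(\Gam)$-fixed point in $\bfH/\bfL(k)$ and contradicting Zariski density. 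Note also that the conclusion is only that \emph{some} witness subgroup of $U$ exists in the sense of Definition~\ref{def:comp}; you do not need (and should not try) to show that $U^\prime$ itself is the witness.

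Part (1) also has a gap. You take a second unipotent $U^{\prime\prime} < W$ conjugate into $U^-$ and assert that the same map $\Psi$ is $(P^- \times \Gam)$-equivariant ``by the same chain argument.'' But Lemma~\ref{lem:BFMS4.4} only identifies gates along subnormal chains; $U^\prime$ and $U^{\prime\prime}$ are not subnormally related, so there is no a priori reason the $U^{\prime\prime}$-gate has the same underlying map as $\Psi$, nor that $\Psi$ is a $U^{\prime\prime}$-algebraic representation at all. The paper avoids this entirely: since $A < P$, $\Psi$ is already $A$-equivariant via $\conj{\tau}|_A$, triviality of $\bfJ$ forces $\Psi$ to be the $A$-gate, and then $A \lhd N_G(A)$ and Lemma~\ref{lem:BFMS4.4} extend the equivariance to $N_G(A)$, which contains a Weyl element, so $\langle P, N_G(A)\rangle = G$; the passage from this two-sided equivariance to an extension of $\rho$ is then the cited argument of Bader--Furman, not a naive gluing of $\conj{\tau}$ and $\conj{\tau}^-$ over $MA$ (agreement on the intersection of two subgroups does not by itself produce a homomorphism on the group they generate, and $\Psi(e)$ is not even defined for a measurable a.e.-defined map).
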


We note that Proposition~\ref{prop:SRincompt} is essentially proved in \cite[\S4.3]{BFMS}. The main difference is that here we emphasize (in)compatibility of $\bfJ$ and $\conj{\tau}$, rather than compatibility of the group $\bfH$. Apart from that and minor differences due to the fact that $G=\SU(n,1)$ here rather than $\SO(n,1)$, the proofs are the same. Nevertheless, we include the full proof for the reader's convenience.

\begin{proof}[Proof of Proposition~\ref{prop:SRincompt}]
By \cite[\S3]{BorelTits} we can find a proper parabolic subgroup $P<G$ containing $U^\prime$. Let $M$, $A$, $U$, and $Z$ be as in \S\ref{subsec:subgroups}. Since $G$ has real rank 1, $P$ is minimal parabolic and all its unipotent elements are contained in $U$, hence $U^\prime<U$.

Now consider the group $U^\prime Z<U$, where $Z$ is the center of $U$. We then have $U^\prime \lhd U^\prime Z \lhd U \lhd P$, and applying Lemma~\ref{lem:BFMS4.4} we see that the gates for $U^\prime$ and $P$ can be chosen to have the same map. Therefore, $\tau$ extends to a continuous homomorphism $\conj{\tau}:P \to N_\bfH(\bfJ) / \bfJ(k)$ with respect to which $\Psi$ is $(P \times \Gam)$-equivariant.

Assume $\bfJ$ is trivial. As $A < P$, $\Psi$ is $A$-equivariant through $\conj{\tau}|_A$. Triviality of $\bfJ$ implies that this must be the gate in the category of $A$-algebraic representations. Then Lemma~\ref{lem:BFMS4.4} implies that $\conj{\tau}|_A$ extends to a homomorphism $N_G(A) \rightarrow \bH(k)$ for which $\Psi$ is $N_G(A)$-equivariant, where $N_G(A)$ is the normalizer of $A$ in $G$.

Since $N_G(A)$ contains a Weyl element for $A$ (i.e., a generator for the Weyl group) we have that the group $\langle P, N_G(A) \rangle$ generated by $P$ and $N_G(A)$ is $G$. Since $\Psi$ is equivariant for both $P$ and $N_G(A)$, using \cite[Prop.\ 5.1]{BaderFurman} and following the end of the proof of \cite[Thm.\ 1.3]{BaderFurman}, we conclude that $\rho : \Gam \to \bfH(k)$ extends to a continuous homomorphism $\wh{\rho} : G \to \bfH(k)$. This proves the proposition when $\bfJ$ is trivial.

We now assume that $\bfJ$ is nontrivial but $\conj{\tau}$ is compatible and derive a contradiction. Compatibility of $\conj{\tau}$ implies that the Zariski closures of $\conj{\tau}(U^\prime)$ and $\conj{\tau}(U)$ coincide. Proposition~\ref{prop:map} implies that there exists a proper noncompact $k$-algebraic subgroup $\bfL\lneq \bfH$ and a measurable $W$-invariant, $\Gam$-equivariant map $\phi : G \to \bfH / \bfL(k)$.

The $W$-invariant map $\phi$ is also $U^\prime$-invariant, as $U^\prime<W$. Thus it factors through $\Psi:G\to \bH/\bfJ(k)$ and
\[
G\to \bH/\bfJ(k) \to (\bH/\bfJ)/\overline{\bar{\tau}(U')}(k)= (\bH/\bfJ)/\overline{\bar{\tau}(U)}(k)
\]
by $U^\prime$-invariance, where $\overline{\conj{\tau}(U^\prime)} = \overline{\conj{\tau}(U)}$ are the corresponding Zariski closures. Since $\Psi$ is $U$-equivariant, the above composition is $U$-invariant, and it follows that $\phi$ is also $U$-invariant. Since $\phi$ is also $W$-invariant and $\langle U, W \rangle = G$, we obtain that $\phi : G \to \bH / \bfL(k)$ is an essentially constant $\Gam$-equivariant map, hence $\rho(\Gam)$ has a fixed point on $\bH / \bfL(k)$. This is impossible since $\rho(\Gamma)$ is Zariski dense in $\bH$ and $\bfL$ is a proper algebraic subgroup of the connected adjoint group $\bH$. This is a contradiction, which completes the proof.
\end{proof}

\noindent
We are now prepared to prove part \eqref{thm:SRcomp} of Theorem~\ref{thm:SR}.

\begin{proof}[Proof of Theorem~\ref{thm:SR}\eqref{thm:SRcomp}]
Suppose $U^\prime<W$ is a nontrivial unipotent subgroup and consider the category of $U^\prime$-algebraic representations with gate $\Psi : G \to \bfH / \bfJ(k)$, where $\bfJ < \bfH$ is a $k$-algebraic subgroup and $\Psi$ is a measurable map that is $(U^\prime \times \Gam)$-equivariant with respect to a homomorphism $\tau : U' \to N_\bfH(\bfJ) / \bfJ(k)$.

We claim that $\bfJ$ is trivial. Indeed, if it was nontrivial then it would be incompatible by Proposition~\ref{prop:SRincompt}, contradicting the compatibility of $\bfH$ with $\SU(n,1)$. Applying Proposition~\ref{prop:SRincompt} again, we see that $\rho$ extends to a continuous homomorphism from $G$ to $\bfH(k)$. This completes the proof.
\end{proof}

\section{The proof of Theorem~\ref{thm:SR}\eqref{thm:SRpun1}}\label{sec:SRpun1}

We begin by stating Proposition~\ref{prop:SRJZ} and then use it and Proposition~\ref{prop:SRincompt} to prove Theorem~\ref{thm:SR}\eqref{thm:SRpun1}. The rest of this section is then devoted to the proof of Proposition~\ref{prop:SRJZ}.

\begin{prop} \label{prop:SRJZ}
Suppose that $G$ is $\SU(n,1)$ for $n \ge 2$ and $\Gam < G$ is a lattice. Let $H=\PU(n,1)$, $\conj{Z}<H$ be the center of the unipotent radical of a proper parabolic subgroup $Q<H$, and $\rho:\Gamma\to H$ be a homomorphism with unbounded, Zariski dense image. Assume that there exists a continuous homomorphism $\tau:P\to Q/\conj{Z}$ with one-dimensional kernel and a measurable map $\Phi:G\to H/\conj{Z}$ that is $(P \times \Gam)$-equivariant with respect to the left $\Gamma$-action and the right $P$-action on $H / \conj{Z}$ via $\tau$. Then $\rho$ extends to a continuous homomorphism from $G$ to $H$.
\end{prop}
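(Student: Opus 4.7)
The plan is to convert the datum of $\Phi$ into a measurable $\rho$-equivariant self-map $\bar\Phi$ of the complex hyperbolic boundary $\partial\bbB^n$ enjoying a chain-incidence property, apply Pozzetti's theorem to algebraize $\bar\Phi$ as the boundary action of an isometry $\sigma_0$ of $\bbB^n$, and then read off the extension $\widehat\rho:G\to H$ as conjugation by $\sigma_0$ composed with the natural projection $\pi:G\to H$.

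First I would extract the boundary map. By Proposition~\ref{prop:autP}, $\tau:P\to Q/\bar Z$ is surjective with kernel $CZ$, and up to precomposing by an inner automorphism of $P$ one has either $\tau=\theta$ or $\tau=c\circ\theta$, where $\theta:P\twoheadrightarrow P/CZ\cong Q/\bar Z$ is the natural quotient and $c$ is complex conjugation on $Q/\bar Z\cong \bbR^*\cdot\U(n-1)\ltimes\bbC^{n-1}$. Composing $\Phi$ with the $H$-equivariant projection $H/\bar Z\to H/Q$ and descending via the $(P\times\Gamma)$-equivariance, one obtains a measurable $\rho$-equivariant map $\bar\Phi:P\backslash G\to H/Q$, which under the canonical identifications of both sides with $\partial\bbB^n$ becomes a measurable $\rho$-equivariant self-map of $\partial\bbB^n$.

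I would then extract a chain-incidence property from the extra information carried by $\Phi$ beyond $\bar\Phi$. The fiber of $H/\bar Z\to H/Q$ over the base point $\xi_0$ is $Q/\bar Z$, which records more geometric information than $\xi_0$ alone: quotienting by the Levi factor $\bbR^*\cdot\U(n-1)$ of $Q/\bar Z$ recovers the space $\bbC^{n-1}$ of chains through $\xi_0$, so $\Phi(g)$ encodes the boundary point $\bar\Phi(Pg)$ together with (the data of) a chain through it. Running a fiber product argument comparing $\Phi(g_1)$ and $\Phi(g_2)$ for a generic pair $(g_1,g_2)\in G\times G$ — translating each $g_i$ inside its $P$-coset so as to align the chain data carried by $\Phi(g_i)$ using the explicit form of $\tau$ from Proposition~\ref{prop:autP}, and then appealing to $\Gamma$-equivariance — one concludes that the $\bar\Phi$-image of the chain determined by $(Pg_1,Pg_2)$ in $P\backslash G$ lies in the chain of $H/Q$ determined by $(\bar\Phi(Pg_1),\bar\Phi(Pg_2))$ (possibly twisted by $c$ in the second alternative). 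This translation of the original datum into a chain-incidence problem on $\partial\bbB^n$ is the heart of the argument and the principal technical obstacle — the ``delicate fiber product argument'' referenced in the introduction.

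Finally, Pozzetti's rigidity theorem (Theorem~\ref{thm:Pozzetti}) applies to the resulting chain-preserving $\rho$-equivariant measurable map $\bar\Phi$: because $\rho(\Gamma)$ is Zariski dense and unbounded, her theorem produces a single isometry $\sigma_0\in \Isom(\bbB^n)$ (holomorphic or anti-holomorphic, depending on which case of Proposition~\ref{prop:autP} prevails) such that $\bar\Phi$ agrees almost everywhere with the boundary action of $\sigma_0$. Combining $\bar\Phi=\sigma_0$ with the equivariance $\bar\Phi(Pg\gamma^{-1})=\rho(\gamma)\bar\Phi(Pg)$ yields $\rho(\gamma)=\sigma_0\pi(\gamma)\sigma_0^{-1}$ for every $\gamma\in\Gamma$, where $\pi:G=\SU(n,1)\to\PU(n,1)=H$ is the natural projection. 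Since $H$ is normal in $\Isom(\bbB^n)$, conjugation by $\sigma_0$ is an automorphism of $H$, so the formula $\widehat\rho(g)=\sigma_0\pi(g)\sigma_0^{-1}$ defines a continuous homomorphism $G\to H$ extending $\rho$, as required.
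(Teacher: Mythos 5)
Your skeleton does match the paper's: normalize $\tau$ via Proposition~\ref{prop:autP}, push $\Phi$ down to a $\Gamma$-equivariant boundary map $\phi:\partial\bbB^n\to\partial\bbB^n$, interpret the residual data of $\Phi$ as a chain through the image point, and feed a chain-incidence statement into Theorem~\ref{thm:Pozzetti}. The genuine gap is that the central step---actually establishing the hypothesis of Pozzetti's theorem---is asserted rather than proved, and the mechanism you sketch (``translating each $g_i$ inside its $P$-coset so as to align the chain data, then appealing to $\Gamma$-equivariance'') is not a workable argument as stated. Indeed, the conclusion you aim for, that the $\phi$-image of the chain through $(Pg_1,Pg_2)$ lies in the chain through the image points, is not even meaningful for a merely measurable $\phi$, since individual chains are null sets; what is needed is a statement about almost every triple with respect to the measure class on $\mathcal{C}^0_3$. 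The argument that works is: after the normalization of Proposition~\ref{prop:autP} one has $\tau(P\cap D)=\theta(P\cap D)$, so the composition $G\to H/\conj{Z}\to (H/\conj{Z})/\theta(P\cap D)$ is right $(P\cap D)$-invariant and hence descends to a $\Gamma$-equivariant map $\psi:\mathcal{C}_1\to\mathcal{C}_1$ with $\al\circ\phi^{(2)}=\psi\circ\al$ almost everywhere; one then identifies $\mathcal{C}^1_3$ with the fibered square of $\al:(\partial\bbB^n)^{(2)}\to\mathcal{C}_1$ (Lemma~\ref{lem:fpalv}) and applies the measure-theoretic fiber-product lemma (Lemma~\ref{lem:fiberedproduct}) to conclude that the essential image of $\phi^3$ lies in $\mathcal{C}^1_3$ for the measure class coming from $\mathcal{C}^0_3$, which is exactly the hypothesis of Theorem~\ref{thm:Pozzetti} (this is Corollary~\ref{cor:pozzetti}). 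This measure-theoretic bookkeeping is the ``delicate fiber product argument''; flagging it as the principal obstacle without carrying it out leaves the proof incomplete at its only hard point.

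Two smaller issues. First, Theorem~\ref{thm:Pozzetti} as stated yields only that $\phi$ agrees almost everywhere with a rational map, not that it is the boundary action of an isometry $\sigma_0$; the extension of $\rho$ is then obtained from Margulis's lemma on equivariant rational maps \cite[Lem.\ 5.1.3]{Zimmer}, as in Corollary~\ref{cor:pozzetti}(2). Your conjugation formula would follow a posteriori, but as written it attributes to Pozzetti more than the cited statement gives. Second, you should justify that $\phi$ has Zariski dense essential image (equivalently, is not essentially constant into a proper invariant subvariety); this follows from Zariski density of $\rho(\Gam)$ and is needed both for Pozzetti's hypothesis and to know $\phi^2$ lands in $(\partial\bbB^n)^{(2)}$, as in Corollary~\ref{cor:pozzetti}(1).
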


Using this, we prove part (2) of Theorem~\ref{thm:SR}.

\begin{proof}[Proof of Theorem~\ref{thm:SR}\eqref{thm:SRpun1} given Proposition~\ref{prop:SRJZ}]
Here $k=\mathbb{R}$ and $\bfH$ is the $k$-algebraic group corresponding to $H$. Let $U^\prime<W$ be a nontrivial unipotent subgroup and consider the category of $U'$-algebraic representations of $G$ and the corresponding gate $\Psi : G \to \bfH / \bfJ(k)$, where $\bfJ < \bfH$ is a $k$-algebraic subgroup and $\Psi$ is a measurable map that is $(U^\prime \times \Gam)$-equivariant  with respect to a homomorphism $\tau : U^\prime \to N_\bfH(\bfJ) / \bfJ(k)$. By Proposition~\ref{prop:SRincompt}, $\tau$ extends to a continuous homomorphism $\conj{\tau}:P \to N_\bfH(\bfJ) / \bfJ(k)$ such that $\Psi$ is $(P \times \Gam)$-equivariant with respect to $\bar{\tau}$.

If $\bfJ$ is trivial then we are done by Proposition~\ref{prop:SRincompt}. We therefore assume that $\bfJ$ is nontrivial. It follows, again by Proposition~\ref{prop:SRincompt}, that $\conj{\tau}$ is incompatible. Proposition~\ref{prop:comppu} implies that $\bfJ$ is the center of the unipotent radical of a proper parabolic subgroup of $\bfH$ and $\ker(\conj{\tau})$ is one-dimensional. By Proposition~\ref{prop:SRJZ} we conclude that $\rho$ indeed extends to a continuous homomorphism from $G$ to $\bfH(k) = \PU(n,1)$. This completes the proof.
\end{proof}

The proof of Proposition~\ref{prop:SRJZ} will be given in \S\ref{ssec:proofSRJS}. Before giving the proof, we require preliminary subsections discussing the notion of \emph{fiber products} in the measured category and the incidence geometry of \emph{chains} on the ideal boundary of complex hyperbolic space.

\subsection{Fiber products}\label{ssec:Fiber}

We begin by recalling some standard definitions. We will consider the category of Lebesgue spaces and their morphisms, where a Lebesgue space is a standard Borel space endowed with a measure class. An almost everywhere defined measurable map between Lebesgue spaces is said to be measure class preserving if the preimage of every null set is null. A morphism of Lebesgue spaces is an equivalence class of almost everywhere defined measure class preserving measurable maps, where two such maps are equivalent if they agree almost everywhere.

We now recall the definition of the fiber product in this category. Suppose that $X, Y, Z$ are Lebesgue spaces endowed with probability measures $\mu_X$, $\mu_Y$, and $\mu_Z$, respectively. Let $\phi : X \to Z$ and $\psi : Y \to Z$ be maps such that the measures are compatible in the sense that $\phi_* \mu_X$ and $\psi_*\mu_Y$ are in the same measure class as $\mu_Z$. The corresponding set theoretic fiber product is
\[
X \times_Z Y = \left\{(x, y) \in X \times Y\ :\ \phi(x) = \psi(y) \right\} \subseteq X \times Y.
\]
For $\mu_Z$ almost every $z\in Z$, disintegration of $\phi$ and $\psi$ give  measures $\nu_{X,z}$ and $\nu_{Y,z}$ on the corresponding fibers $\phi^{-1}(z)\subset X$ and $\psi^{-1}(z)\subset Y$ such that
\[
\mu_X = \int_Z \nu_{X,z}\, d\mu_Z \quad\quad\quad \mu_Y = \int_Z \nu_{Y,z}\, d\mu_Z.
\]
We then define a measure
\[
\mu_X \times_Z \mu_Y = \int_Z (\nu_{X,z} \times \nu_{Y,z})\, d\mu_Z,
\]
on $X\times Y$ whose equivalence class is supported on the set theoretic fibered product $X \times_Z Y$. Further, note that this measure class is independent of the choices of representatives for $\phi$ and $\psi$ and the choices of representatives for $\mu_X$, $\mu_Y$, and $\mu_Z$. We thus view this measure as a measure class on $X \times_Z Y$ and call it the \emph{fiber product measure}. For details, see for instance \cite[p.\ 265]{Ramsay}. We will need the following lemma.

\begin{lemma} \label{lem:fiberedproduct}
Consider Lebesgue spaces $X$, $Y$, and $Z$ endowed with morphisms $X\to Z$ and $Y\to Z$. Assume $X^\prime$, $Y^\prime$, and $Z^\prime$ are standard Borel spaces endowed with Borel maps $X^\prime\to Z^\prime$ and $Y^\prime\to Z^\prime$, and moreover that there are almost everywhere defined measurable maps $f:X \to X^\prime$, $g:Y\to Y^\prime$, and $h:Z\to Z^\prime$ such that the following diagram commutes:
\[
	\begin{tikzcd}
{} & & X^\prime \arrow{ddrr} & & & & Y^\prime  \arrow{ddll} \\
X \arrow[crossing over]{ddrr} \arrow{urr}[above]{f} & & & & Y \arrow[crossing over]{ddll} \arrow{urr}[above]{g} & & \\
& & & & Z^\prime & & \\
& & Z \arrow{urr}[below]{h} & & & &
	\end{tikzcd}
\]
Consider the product map $f \times g: X \times Y \to X^\prime \times Y^\prime$ and the fiber product measure class $\mu_X \times_Z \mu_Y$ on $X\times_Z Y \subset X\times Y$. Then for $\mu_X \times_Z \mu_Y$ almost every $(x,y)$, one has that $(f \times g)(x,y) \in X^\prime \times_{Z^\prime} Y^\prime$.
\end{lemma}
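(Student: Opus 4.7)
The plan is to reduce to honest Borel maps on conull subsets, disintegrate the fiber product measure along $Z$, and use commutativity of the diagram pointwise on each fiber. The entire content is a routine measure-theoretic manipulation of disintegrations; the only subtlety is the careful treatment of almost-everywhere defined maps between Lebesgue spaces.

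First I would fix Borel representatives of the almost-everywhere defined maps $f$, $g$, and $h$ on conull Borel subsets $X_0 \subseteq X$, $Y_0 \subseteq Y$, and $Z_0 \subseteq Z$. Writing $\phi : X \to Z$ and $\psi : Y \to Z$ for the structure maps of the fiber product, and $a : X' \to Z'$, $b : Y' \to Z'$ for the two given Borel maps on the primed side, the commutativity of the diagram becomes the honest identities $a \circ f = h \circ \phi$ on $X_0 \cap \phi^{-1}(Z_0)$ and $b \circ g = h \circ \psi$ on $Y_0 \cap \psi^{-1}(Z_0)$, after possibly shrinking $X_0$ and $Y_0$ by null sets so as to match up the various almost-everywhere statements.

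Next I would invoke the disintegrations $\mu_X = \int_Z \nu_{X,z}\, d\mu_Z$ and $\mu_Y = \int_Z \nu_{Y,z}\, d\mu_Z$. Since $X_0$ and $Y_0$ are conull, the disintegration formula forces $\nu_{X,z}(X_0) = 1$ and $\nu_{Y,z}(Y_0) = 1$ for $\mu_Z$-almost every $z$, and restricting further to $z \in Z_0$ one has $h(z)$ defined. For such $z$ and for $(\nu_{X,z} \times \nu_{Y,z})$-almost every $(x,y)$ one then has $x \in X_0 \cap \phi^{-1}(z)$ and $y \in Y_0 \cap \psi^{-1}(z)$, so commutativity gives
\[
a(f(x)) \;=\; h(\phi(x)) \;=\; h(z) \;=\; h(\psi(y)) \;=\; b(g(y)),
\]
which is precisely the condition defining $X' \times_{Z'} Y'$. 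Integrating over $z$ via the formula $\mu_X \times_Z \mu_Y = \int_Z (\nu_{X,z} \times \nu_{Y,z})\, d\mu_Z$ yields the conclusion. There is no serious obstacle beyond the bookkeeping of almost-everywhere defined maps; the displayed string of equalities is the heart of the matter.
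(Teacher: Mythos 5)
Your proof is correct and rests on the same ideas as the paper's: the disintegration $\mu_X \times_Z \mu_Y = \int_Z (\nu_{X,z}\times\nu_{Y,z})\,d\mu_Z$ together with commutativity of the diagram over almost every fiber $\phi^{-1}(z)$, $\psi^{-1}(z)$. The paper merely packages the same fiberwise observation at the level of measures, via $(f\times g)_*(\mu_X\times_Z\mu_Y) = f_*\mu_X \times_{Z^\prime} g_*\mu_Y$, which is supported on $X^\prime\times_{Z^\prime}Y^\prime$, while you argue pointwise on conull Borel sets; the content is the same.
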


\begin{proof}
Using the choices and notation introduced above, we have
\[
(f \times g)_*(\mu_X \times_Z \mu_Y) = \int_{Z^\prime} (f_* \nu_{X,z} \times g_* \nu_{Y,z})\, d h_*\mu_Z,
\]
and conclude that $(f \times g)_*(\mu_X \times_Z \mu_Y)=f_*\mu_X \times_{Z^\prime} g_*\mu_Y$. That is, the push forward of the fiber product measure is the fiber product of the push forward measures. It follows that this measure is indeed supported on the set theoretical fiber product $X^\prime \times_{Z^\prime} Y^\prime$.
We then get that:
\begin{align*}
&\,(\mu_X \times_Z \mu_Y)\left((f \times g)^{-1}((X^\prime \times Y^\prime) \ssm (X^\prime\times_{Z^\prime} Y^\prime))\right) \\
= &\,(f \times g)_*(\mu_X \times_Z \mu_Y)((X^\prime \times Y^\prime) \ssm (X^\prime\times_{Z^\prime} Y^\prime))\\
= &\,(f_*\mu_X \times_{Z^\prime} g_*\mu_Y)((X^\prime \times Y^\prime) \ssm (X^\prime\times_{Z^\prime} Y^\prime)) \\
=&\,0
\end{align*}
This proves the lemma.
\end{proof}

\subsection{Real algebraic varieties as Lebesgue spaces} \label{ssec:realvarieties}

Assume $V$ is a real algebraic variety. Any choice of smooth volume form on the nonsingular locus of $V$ gives rise to a same measure class on $V$ which we will call the \emph{volume measure class} and denote $\mu_V$. Thus $V$ has a natural structure as a Lebesgue space. We will use the following fact.

\begin{lemma} \label{lem:fpalv}
Let $X$, $Y$, and $Z$ be real algebraic varieties and let $\phi:X\to Z$, $\psi:Y\to Z$ be real rational maps that are surjective on real points such that $\phi_* \mu_X$ and $\psi_*\mu_Y$ are in the same measure class as $\mu_Z$. Consider the corresponding fiber product $X\times_Z Y$ in the category of real algebraic varieties. Then the volume measure class on $X\times_Z Y$ coincides with the corresponding fiber product measure class of the volume measure classes on $X$, $Y$, and $Z$.
\end{lemma}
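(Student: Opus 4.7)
The plan is to reduce to a Zariski open dense subset on which both $\phi$ and $\psi$ become submersions of smooth manifolds, use the coarea formula to disintegrate the ambient volume forms along these submersions, and then identify the disintegration of the volume form on the algebraic fiber product with the fiber product measure class constructed in \S\ref{ssec:Fiber}.

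First, I would pass to smooth loci. By generic smoothness for dominant regular maps of real algebraic varieties in characteristic zero, together with Sard's theorem, there exist nonempty Zariski open subsets $X_0\subseteq X$, $Y_0\subseteq Y$, $Z_0\subseteq Z$ such that $X_0$, $Y_0$, $Z_0$ are smooth real algebraic manifolds, $\phi(X_0),\psi(Y_0)\subseteq Z_0$, and the restrictions $\phi|_{X_0}$, $\psi|_{Y_0}$ are surjective submersions onto $Z_0$. Then $X_0\times_{Z_0}Y_0$ is a smooth manifold, Zariski open and dense in the algebraic fiber product $X\times_Z Y$. Since complements of Zariski open dense subsets have strictly smaller dimension and are therefore null for the volume measure class (as already used for the functoriality discussion in \S\ref{ssec:realvarieties}), it suffices to verify the claim on these smooth loci.

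Second, on the smooth locus I would invoke the standard local model of a submersion: around any point $x\in X_0$ one can choose real analytic local coordinates in which $\phi|_{X_0}$ becomes a coordinate projection. In such coordinates the volume form on $X_0$ factors, up to a smooth positive density, as a wedge of the pullback of the volume form on $Z_0$ with a fiber volume form on $\phi^{-1}(z)\cap X_0$. This is the content of the coarea formula, and it identifies the disintegration $\mu_X=\int_Z\nu_{X,z}\,d\mu_Z$ as a disintegration of measure classes in which, for almost every $z\in Z_0$, $\nu_{X,z}$ is in the measure class of the induced volume form on the smooth submanifold $\phi^{-1}(z)\cap X_0$. The analogous statement holds for $\psi$ and $Y$.

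Finally, I would apply the same reasoning to the composite submersion $X_0\times_{Z_0}Y_0\to Z_0$, whose fiber over a generic $z\in Z_0$ is precisely $\phi^{-1}(z)\times\psi^{-1}(z)$ with volume form equal, up to a smooth positive density, to the product of the two fiber volume forms. Integrating the resulting disintegration over $Z_0$ identifies the volume measure class on $X\times_Z Y$ with $\int_Z(\nu_{X,z}\times\nu_{Y,z})\,d\mu_Z$, which is by definition the fiber product measure class $\mu_X\times_Z\mu_Y$. The only delicate step is the initial reduction: one must confirm that the singular loci of $X$, $Y$, $Z$ together with the critical loci of $\phi$ and $\psi$ form proper algebraic subsets of strictly smaller dimension, so that they contribute nothing to the measure class. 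This is standard for dominant regular maps in characteristic zero, so the remaining work is an elementary Fubini-type computation in local coordinates that is visibly insensitive to the smooth positive density factors, since we are working with measure classes throughout.
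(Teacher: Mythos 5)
The paper never actually proves Lemma~\ref{lem:fpalv}: it is recorded without proof as an ``easy'' consequence of the discussion in \S\ref{ssec:realvarieties}, so there is no argument in the paper to compare yours with. Your strategy --- pass to a locus where the maps become submersions of smooth manifolds, disintegrate the volume forms via the coarea formula, and finish with a fiberwise Fubini computation insensitive to positive density factors --- is the natural way to fill this in, and it is exactly what is needed where the lemma is used, namely for the fibered powers of the forgetful map $\al_2:\mathcal{C}_{(2)}\to\mathcal{C}_1$, which is a surjective submersion of smooth homogeneous varieties.

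Two of your steps, however, are asserted rather than proved, and the second is a real issue at the stated level of generality. First, over $\bbR$ one cannot in general arrange $\phi|_{X_0}$ to be surjective onto a \emph{Zariski} open $Z_0$: the image of the real points is only semialgebraic (consider $x\mapsto x^2$). The correct substitute is the compatibility hypothesis $\phi_*\mu_X\sim\mu_Z$, which is built into the very definition of the fiber product measure class in \S\ref{ssec:Fiber}; it guarantees that $\phi(X_0)$ is a conull (Euclidean-open, semialgebraic) subset of $Z(\bbR)$, and one works over that set. Second, and more seriously, the claim that the complement of $X_0\times_{Z_0}Y_0$ in $X\times_Z Y$ has strictly smaller dimension is not automatic: the fiber product can acquire top-dimensional components lying over the non-submersive locus. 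For instance, with $X=Y=Z=\bbR^2$ and $\phi=\psi:(x_1,x_2)\mapsto(x_1,x_1x_2)$, the real fiber product is the union of the diagonal and the $2$-plane $\{x_1=0\}\times\{y_1=0\}$; the second component carries positive volume but is null for $\mu_X\times_Z\mu_Y$, so both your reduction step and the lemma, read completely literally, need an extra hypothesis (e.g.\ that $\phi$ and $\psi$ are submersions on real points, or have equidimensional fibers). Under such a hypothesis --- which holds in the paper's application, where the maps are $G$-equivariant submersions between the chain varieties --- your argument goes through and completes the proof.
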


\subsection{The incidence geometry of chains}\label{ssec:Chains}

Let $\bbB^n$ denote complex hyperbolic $n$-space, and recall the notation from \S\ref{subsec:subgroups}. Then $G = \SU(n,1)$ acts transitively on $\bbB^n$ with point stabilizers conjugates of $K = \mathrm{S}(\U(n) \times \U(1))$, the maximal compact subgroup of $G$. We can also identify $\bbB^n$ with the unit ball in $\bbC^n$ with its Bergman metric. Note that $\bbB^1$ is naturally identified with the Poincar\'e disk model of hyperbolic $2$-space.

Let $\partial\bbB^n$ denote the ideal boundary of $\bbB^n$, which is homeomorphic to the $(2n-1)$-sphere $S^{2n-1}$. Identify $\partial\bbB^n$ with the space of isotropic lines in $\mathbb{C}^{n+1}$ with respect to a hermitian form $h$ of signature $(n,1)$, thus with the real algebraic variety $P \bs G$ for $P < G$ a minimal parabolic subgroup. The space of pairs of points $\partial\bbB^n\times \partial\bbB^n$ is then identified with $P \bs G \times P \bs G$ and the Zariski open subset $(\partial\bbB^n)^{(2)}$ consisting of pairs of distinct points is identified with $MA \bs G$, i.e., the space of oriented geodesics in $\bbB^n$.

A totally geodesic holomorphic embedding $f : \bbB^1 \hookrightarrow \bbB^n$ induces an embedding
\[
f_\infty : \partial \bbB^1 \simeq S^1 \hookrightarrow \partial \bbB^n,
\]
and $f_\infty(\partial \bbB^1)$ is called a \emph{chain} on $\partial \bbB^n$. Denote the space of chains on $\partial \bbB^n$ by $\mathcal{C}$. Chains were originally studied by Cartan \cite{Cartan} and we refer to \cite{Goldman} for basic facts about them. Chains are in one-to-one correspondence with two-dimensional subspaces of $\mathbb{C}^{n+1}$ on which $h$ is nondegenerate of signature $(1,1)$. In particular, the $G$-action on $\mathcal{C}$ is transitive and the stabilizers are conjugate in $G$ to $D=\mathrm{S}(\U(1,1) \times \U(n-1))$. Therefore, we have the identification $\mathcal{C} \simeq D \bs G$.

Given a natural number $k$, we denote by $\mathcal{C}_k$ the space whose elements are a chain and $k$ points lying on that chain. That is,
\[
\mathcal{C}_k = \left\{(c,x_1,\ldots,x_k)\ :\ c\in \mathcal{C},\ x_1,\ldots,x_k\in c\right\}.
\]
Let $\mathcal{C}_{(k)}$ denote the subset of $\mathcal{C}_k$ where the points are distinct, i.e.,
\[
\mathcal{C}_{(k)}= \left\{(c,x_1,\ldots,x_k) \in \mathcal{C}_k\ :\ x_i\neq x_j \text{ for } i\neq j \right\}.
\]
Note that $\mathcal{C}_0=\mathcal{C}$ and $\mathcal{C}_1=\mathcal{C}_{(1)}$ is the set of pairs consisting of a chain and a point on that chain. Thus $\mathcal{C}_0\simeq D \bs G$ and $\mathcal{C}_1 \simeq (P\cap D) \bs G = MAZ \bs G$.
In particular, these spaces have real algebraic variety structures and we endow them with the corresponding volume measure classes, which are also the unique $G$-invariant measure classes. We have the projection $\mathcal{C}_1\to \mathcal{C}$ forgetting the point, which corresponds to the projection $(P\cap D)\bs G \to D \bs G$. This map is real algebraic and $G$-equivariant. The space $\mathcal{C}_k$ can be seen as the $k^{th}$ fiber power of the above projection and is thus also endowed with a natural real algebraic variety structure and a corresponding measure class. The space $\mathcal{C}_{(k)}$ is a dense Zariski open subset of $\mathcal{C}_k$ and then is also endowed with a real algebraic variety structure and a corresponding measure class. In particular, the inclusion $\mathcal{C}_{(k)}\subset \mathcal{C}_k$ is an isomorphism of Lebesgue spaces.

In this paper we will primarily be concerned with $\mathcal{C}_{(2)}$ and $\mathcal{C}_{(3)}$. A distinct pair of points $x,y \in \partial\bbB^n$ determines a unique chain, which is easily seen by considering $x$ and $y$ as isotropic lines in $\mathbb{C}^{n+1}$ and taking their span. Thus there are $G$-equivariant isomorphisms $MA \bs G \simeq (\partial\bbB^n)^{(2)} \simeq \mathcal{C}_{(2)}$.

We now consider the subset
\[
\mathcal{C}^0_k=\left\{(c,x_1,\ldots,x_k) \in \mathcal{C}_k\ :\ x_i \neq x_j \textrm{ for some } i \neq j \right\} \subset \mathcal{C}_k
\]
consisting of $k$-tuples of points containing at least two distinct points and the subset
\[
\mathcal{C}^1_k = \left\{(c,x_1,\ldots,x_k) \in \mathcal{C}_k\ :\ x_i\neq x_1 \textrm{ for } i \neq 1\right\} \subset \mathcal{C}_k^0
\]
where the first point in each chain is different from the others. We note that
\[
\mathcal{C}_{(k)} \subset \mathcal{C}^1_k \subset \mathcal{C}^0_k \subset \mathcal{C}_k,
\]
and these are all dense Zariski open subsets of $\mathcal{C}_k$. In particular, they are all isomorphic in the category of Lebesgue spaces.

\begin{rem}
The space $\mathcal{C}^0_k$ can be alternatively described as:
\[
\left\{(x_1,\ldots,x_k)\ :\  x_i \subset \bbC^{n+1}\ \textrm{an isotropic line},\ \dim_\bbC \mathrm{span} \{ x_1,\ldots,x_n \} = 2\right\}.
\]
It is this space that is considered in Pozzetti's chain rigidity theorem, which is Theorem~\ref{thm:Pozzetti} below.
\end{rem}

We now make an observation that will be useful later. For each $k$ we define $\alpha_k:\mathcal{C}^1_k\to \mathcal{C}_1$ by $(c,x_1,\ldots,x_k)\mapsto (c,x_1)$. In particular we have the map $\alpha_2:\mathcal{C}_{(2)}=\mathcal{C}^1_2\to \mathcal{C}_1$ that forgets the second point of each chain. We observe that $\alpha_k:\mathcal{C}^1_k\to \mathcal{C}_1$ can be identified in the category of algebraic varieties with the $(k-1)^{st}$ fibered power of $\al_2$. It follows by Lemma~\ref{lem:fpalv} that $\mathcal{C}^1_k$ is isomorphic to the $(k-1)^{st}$ fibered power of $\al_2$ also in the category of Lebesgue spaces.

Note that $\mathcal{C}_{(2)}\simeq MA \bs G$, but $G$ is not transitive on $\mathcal{C}_{(k)}$ already for $k=3$. Indeed, $\mathcal{C}_{(3)}$ has two $G$-orbits distinguished by the cyclic orientations of the given triple of points on each chain. The stabilizer in $G$ of each element of $\mathcal{C}_{(3)}$ is a compact subgroup, namely a conjugate of $M<G$. It follows that the measure class on $\mathcal{C}_{(3)}$ is the sum of the unique $G$-invariant measure classes on the two $G$-orbits. Each of $\mathcal{C}^0_3$, $\mathcal{C}^1_3$, and $\mathcal{C}_3$ are obtained by adding some lower-dimensional manifolds, which are null sets, to $\mathcal{C}_{(3)}$.

\subsection{Pozzetti's chain rigidity theorem}

In \cite{Cartan}, Cartan studied the incidence geometry of chains in $\partial\bbB^n$ and showed that a map preserving this geometry must come from an isometry of $\bbB^n$. His work was later generalized by Burger--Iozzi to almost everywhere defined measurable maps under the added assumption that the map preserves orientation \cite{BurgerIozzi}. This assumption was later removed by Pozzetti \cite{Pozzetti}, and this is the result we will need.

In the formulation of the theorem below we use the notation introduced in \S\ref{ssec:Chains}. Recall that $\partial \bbB^n$ represents the variety of isotropic lines in $\mathbb{C}^{n+1}$, and under this identification, $\mathcal{C}^0_3$ represents triples of isotropic lines that span a two-dimensional subspace. Briefly, Pozzetti's chain rigidity theorem is that a measurable map from $\partial \bbB^n$ to $\partial \bbB^n$ that takes triples of points on a chain to triples of points on a chain is necessarily rational; for more on rationality in this general context see \cite[Ch.~3]{Zimmer}. Restated precisely in our language, this is the following.

\begin{thm}[Thm.\ 1.6 \cite{Pozzetti}]\label{thm:Pozzetti}
For $n \ge 2$, let $\phi : \partial \bbB^n \to \partial \bbB^n$ be a measurable map whose essential image is Zariski dense. Endow $(\partial \bbB^n)^3$ with the measure class $[\mu]$ associated with the volume form on $\mathcal{C}^0_3$ and consider the induced map $\phi^3:(\partial \bbB^n)^3 \to (\partial \bbB^n)^3$. If the $[\mu]$-essential image of $\phi^3$ is contained in $\mathcal{C}^0_3$, then $\phi$ agrees almost everywhere with a rational map.
\end{thm}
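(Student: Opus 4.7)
The plan is to convert the measurable hypothesis on triples into an almost-everywhere map on the space of chains, then into local incidence rigidity on a single chain, and finally invoke a Cartan-type boundary rigidity theorem to conclude rationality. First, I would exploit the fact that any distinct pair $(x, y) \in (\partial\bbB^n)^{(2)}$ determines a unique chain, via the identification $\mathcal{C}_{(2)} \simeq MA\bs G$ from \S\ref{ssec:Chains}. Using Fubini together with the fiber-product description of $\mathcal{C}^0_3$ over $\mathcal{C}_{(2)}$ given by Lemma~\ref{lem:fpalv} and Lemma~\ref{lem:fiberedproduct}, the hypothesis says that for almost every chain $c$ and almost every ordered distinct pair $(x,y)$ on $c$, the Zariski density of the essential image of $\phi$ forces $\phi(x)\neq\phi(y)$, and the image pair spans a unique chain $\Phi(c)$. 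Since the third point of a $\mathcal{C}^0_3$-triple must land on the same chain as the first two, $\Phi(c)$ is independent (a.e.) of the chosen pair, yielding a well-defined measurable map $\Phi : \mathcal{C} \to \mathcal{C}$.

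Next, I would apply Lusin's theorem along the fibration $\mathcal{C}_1 \to \mathcal{C}$ (corresponding to the projection $(P\cap D)\bs G\to D\bs G$) to choose a chain $c_0$ on which $\phi$ restricts to a genuine measurable map $\phi_0 : c_0 \to \Phi(c_0)$, and such that for almost every chain $c$ meeting $c_0$ transversally, the image of $c\cap c_0$ under $\phi$ lies in $\Phi(c_0)\cap\Phi(c)$. Since every pair of points of $\partial\bbB^n$ lies on a common chain, chaining such incidences propagates rigidity from $c_0$ outward, upgrading the measurable data to a genuine incidence-preserving system.

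The main obstacle will be handling orientation. Burger--Iozzi's earlier rigidity theorem in \cite{BurgerIozzi} imposed preservation of the canonical orientation on chains, but $\mathcal{C}_{(3)}$ has two $G$-orbits distinguished by cyclic orientation, and \emph{a priori} $\phi^3$ could mix them. To dispose of this, I would disintegrate the measure along $\mathcal{C}^0_3 \to \mathcal{C}$ and show, via Fubini and the connectedness of each $G$-orbit in $\mathcal{C}_{(3)}$, that the preimage of each orbit under $\phi^3$ is, on almost every chain-fiber, either null or conull. A measurability and ergodicity argument for the $MA$-action on $\mathcal{C}$ should then force a single global choice on a conull subset of chains, splitting the argument into a chain-orientation-preserving case and a chain-orientation-reversing case.

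Finally, in the orientation-preserving case I would invoke \cite{BurgerIozzi} directly to conclude that $\phi$ agrees almost everywhere with a (holomorphic) rational map. In the orientation-reversing case, I would precompose $\phi$ with the antiholomorphic boundary involution of $\bbB^n$ induced by complex conjugation on a real form of $\bbC^{n+1}$ (which swaps the two $G$-orbits on $\mathcal{C}_{(3)}$), reduce to the previous case, and deduce that $\phi$ itself agrees almost everywhere with an antiholomorphic rational map. In either case $\phi$ is rational as a map of real algebraic varieties, completing the proof.
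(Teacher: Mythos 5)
Your plan founders at the orientation-coherence step, and the gap is structural rather than technical. As stated, Theorem~\ref{thm:Pozzetti} assumes \emph{no} equivariance: $\phi$ is an arbitrary measurable map with Zariski dense essential image. Consequently there is no group action under which the sets you construct from $\phi$ are invariant, and the proposed ``measurability and ergodicity argument for the $MA$-action on $\mathcal{C}$'' has nothing to act on. Worse, the fiberwise dichotomy itself is unjustified: for a fixed chain $c$, whether a triple of distinct points of $c$ maps to a positively or negatively cyclically ordered triple on the image chain is governed by the restriction $\phi|_c$, an arbitrary measurable circle map, and nothing in the hypothesis prevents $\phi|_c$ from being non-monotone, in which case \emph{both} orientation classes occupy positive measure in the fiber. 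Connectedness of the two $G$-orbits in $\mathcal{C}_{(3)}$ is a statement about the target space and gives no control over the fibers of a measurable map. Establishing per-chain monotonicity and then a global choice of orientation class is essentially equivalent to the rigidity one is trying to prove; it is exactly the point where the hypothesis of \cite{BurgerIozzi} is genuinely stronger, and Pozzetti's contribution in \cite{Pozzetti} is to get around it by different (linear-algebraic, not dynamical) arguments rather than by reducing to the oriented case. Two secondary problems: without equivariance, Zariski density of the essential image does not force $\phi(x)\neq\phi(y)$ for almost every pair (a map constant on a positive-measure set can still have Zariski dense essential image and send triples in $\mathcal{C}^0_3$ to triples in $\mathcal{C}^0_3$, since $\mathcal{C}^0_3$ only requires two of the three points to be distinct); and your propagation from a single chain $c_0$ runs into the fact that the chains meeting $c_0$ form a null subset of $\mathcal{C}$ (a family of dimension $2n-1$ inside a space of dimension $4n-4$), so the almost-everywhere hypotheses give no information along it without a much more careful Fubini selection of configurations.

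For comparison, the paper does not reprove this statement at all: its ``proof'' is an attribution. It observes that Pozzetti's Theorem~1.6 is stated for targets $\calS_{m_1,m_2}$ with $1<m_1<m_2$ only because the case $m_1=1$ with an orientation hypothesis was already treated by Burger--Iozzi, that her proof nowhere uses $m_1>1$, and that $\calS_{1,n}\simeq\partial\bbB^n$, so her argument applies verbatim. If you want a self-contained proof you would need to follow Pozzetti's actual strategy for removing the orientation assumption, not reduce the unoriented statement to the oriented theorem of \cite{BurgerIozzi}.
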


\begin{proof}[On the proof of Theorem~\ref{thm:Pozzetti}]
The statement of \cite[Thm.\ 1.6]{Pozzetti} assumes that the target of $\phi$ is the Shilov boundary $\calS_{m_1,m_2}$ associated with $\SU(m_1, m_2)$ when $1 < m_1 < m_2$. However, nowhere in the proof of the theorem does Pozzetti use the assumption that $1 < m_1$, and hence her result also holds for $\calS_{1, m}$, $m>1$, and in particular for $\calS_{1, n}$. The Shilov boundary is the space of maximal isotropic subspaces for the relevant hermitian form, hence $\calS_{1, n} \simeq \partial \bbB^n$. In other words, Pozzetti's proof applies without alteration to give Theorem~\ref{thm:Pozzetti} as stated. The reason for Pozzetti's assumption that $m_1 > 1$ is that the analogue of her main result was previously proved by Burger--Iozzi \cite{BurgerIozzi}, however Burger and Iozzi did not need a statement as strong as Theorem~\ref{thm:Pozzetti} to prove their main result.
\end{proof}

We recall from \S\ref{ssec:Chains} that $\mathcal{C}^1_3$ can be identified with the fibered square of the forgetful map $\al_2:\mathcal{C}_{(2)}\to \mathcal{C}_1$ and that $\mathcal{C}_{(2)}\simeq (\partial \bbB^n)^{(2)}$. Upon making this identification, we consider the map $\al:(\partial \bbB^n)^{(2)}\to \mathcal{C}_1$ and regard $\mathcal{C}^1_3$ as the fibered square of $\al$. We then obtain the following corollary of Theorem~\ref{thm:Pozzetti}.

\begin{cor} \label{cor:pozzetti}
Let $G$ be $\SU(n,1)$ for $n \ge 2$, $\Gam < G$ be a lattice, $H$ be $\PU(n,1)$, and $\rho:\Gamma\to H$ be a homomorphism with unbounded, Zariski dense image. Assume that $\phi : \partial \bbB^n \to \partial \bbB^n$ and $\psi:\mathcal{C}_1 \to \mathcal{C}_1$ are measurable, $\Gamma$-equivariant maps, where $\Gamma$ acts on the domain through its inclusion into $G$ and on the target via $\rho$.
Then the following assertions hold.
\begin{enumerate}
\item
The essential image of $\phi^2:(\partial \bbB^n)^{2}\to (\partial \bbB^n)^{2}$ is contained in the set $(\partial \bbB^n)^{(2)}$ of distinct points.

\item
Considering the restricted map $\phi^{(2)}:(\partial \bbB^n)^{(2)} \to (\partial \bbB^n)^{(2)}$, if $\al \circ \phi^{(2)}$ and $\psi\circ \al$ agree almost everywhere as maps from $(\partial \bbB^n)^{(2)}$ to $\mathcal{C}_1$, then $\rho$ extends to a continuous homomorphism from $G$ to $H$.

\end{enumerate}
\end{cor}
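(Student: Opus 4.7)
For part (1), the essential image of $\phi$ is $\rho(\Gamma)$-invariant, and hence its Zariski closure is a $\rho(\Gamma)$-invariant algebraic subvariety of $\partial\bbB^n$. By Zariski density of $\rho(\Gamma)$ in $H$ and transitivity of $H$ on $\partial\bbB^n$, this closure equals $\partial\bbB^n$; in particular $\phi$ has Zariski dense essential image. The set $E = \{(x,y) \in (\partial\bbB^n)^{(2)} : \phi(x) = \phi(y)\}$ is $\Gamma$-invariant under the diagonal action, and the $\Gamma$-action on $(\partial\bbB^n)^{(2)} \simeq MA \bs G$ is ergodic by Howe--Moore together with the finite covolume of $\Gamma$. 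Hence $E$ is null or conull, and conullness would force $\phi$ to be essentially constant, contradicting the Zariski density of its essential image. Therefore $E$ is null, giving (1).

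For part (2), recall from \S\ref{ssec:Chains} that $\mathcal{C}^1_3$ is isomorphic as a Lebesgue space to the fibered square of $\alpha : (\partial\bbB^n)^{(2)} \to \mathcal{C}_1$. Apply Lemma~\ref{lem:fiberedproduct} with $X = Y = X' = Y' = (\partial\bbB^n)^{(2)}$, $Z = Z' = \mathcal{C}_1$, the slanted arrows all equal to $\alpha$, and the vertical arrows $f = g = \phi^{(2)}$ and $h = \psi$; commutativity of the diagram is precisely the hypothesis $\alpha\circ \phi^{(2)} = \psi \circ \alpha$ a.e. The lemma then yields that $\phi^{(2)} \times \phi^{(2)}$ maps almost every point of $\mathcal{C}^1_3$ into $\mathcal{C}^1_3 \subseteq \mathcal{C}^0_3$; equivalently, $\phi^3$ sends almost every triple of distinct points on a common chain to a triple of points lying on a common chain. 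Because $\mathcal{C}^1_3$ is Zariski dense open in $\mathcal{C}^0_3$, the two carry the same measure class on $(\partial\bbB^n)^3$, so this is exactly the hypothesis of Pozzetti's chain rigidity theorem (Theorem~\ref{thm:Pozzetti}); together with the Zariski density of the essential image of $\phi$ from part (1), the theorem provides a rational map $\phi_0 : \partial\bbB^n \to \partial\bbB^n$ agreeing with $\phi$ almost everywhere. Two rational self-maps of $\partial\bbB^n$ that agree almost everywhere coincide, so $\phi_0$ is $\Gamma$-equivariant as a rational map. The main technical point to get right in this step is the bookkeeping that identifies the fibered product with $\mathcal{C}^1_3$ and matches its measure class with the one used in Theorem~\ref{thm:Pozzetti}.

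To extract a homomorphism $G \to H$ from $\phi_0$, consider
\[
\Sigma = \{(g,h) \in G \times H : \phi_0 \circ g = h \circ \phi_0 \text{ as rational self-maps of } \partial\bbB^n\},
\]
where the actions of $g$ and $h$ are viewed as rational self-maps of $\partial\bbB^n$. This is a Zariski closed subgroup of $G \times H$ containing the graph $\{(\gamma, \rho(\gamma)) : \gamma \in \Gamma\}$ by $\Gamma$-equivariance of $\phi_0$. The first projection of $\Sigma$ contains the Borel-dense subgroup $\Gamma$ of $G$, so it equals $G$. If $(e, h) \in \Sigma$, then $h$ fixes every point in the Zariski dense essential image of $\phi_0$, and since $H = \PU(n,1)$ acts faithfully on $\partial\bbB^n$, this forces $h = e$. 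Hence $\Sigma$ is the graph of an algebraic, and in particular continuous, homomorphism $\wh\rho : G \to H$ extending $\rho$, completing the proof.
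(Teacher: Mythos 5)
Your proposal is correct and takes essentially the same route as the paper: the same ergodicity-on-$MA \bs G$ argument for (1), and for (2) the same identification of $\mathcal{C}^1_3$ with the fibered square of $\al$, the same application of Lemma~\ref{lem:fiberedproduct}, and then Pozzetti's theorem (Theorem~\ref{thm:Pozzetti}). The only difference is that where the paper simply cites Margulis's lemma \cite[Lem.\ 5.1.3]{Zimmer} to pass from rationality of $\phi$ to the extension of $\rho$, you reprove that lemma by the standard graph/Zariski-closure argument, which is fine.
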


\noindent The content of $(2)$ in Corollary \ref{cor:pozzetti} is that the boundary map $\phi$ sends chains to chains and that the induced map on chains is $\psi$.

\begin{proof}
We first prove (1). Note that $(\partial \bbB^n)^{(2)}\subset (\partial \bbB^n)^{2}$ is open and Zariski dense, hence of full measure. Moreover $(\partial \bbB^n)^{(2)}$ is isomorphic to $MA \bs G$ as a $G$-space and so it is $\Gamma$-ergodic by Howe--Moore. It follows that the essential image of $\phi^2$ is either contained in $(\partial \bbB^n)^{(2)}$ or in its complement, which is the diagonal in $(\partial \bbB^n)^{2}$. If the latter were true then $\phi$ would be essentially constant, and its essential image is invariant under $\Gamma$. This would imply that $\rho(\Gamma)$ is contained in a proper parabolic subgroup of $H$, contradicting Zariski density of $\rho(\Gam)$. It therefore follows that the essential image of $\phi^2$ is contained in the set of pairs of distinct points, $(\partial \bbB^n)^{(2)}$. This proves (1).

To prove (2), assume that $\al\circ \phi^{(2)}$ and $\psi\circ \al$ agree almost everywhere. By a well-known lemma of Margulis \cite[Lem.\ 5.1.3]{Zimmer}, to show that $\rho$ extends it suffices to show that $\phi$ is rational. Therefore it suffices to show that Theorem~\ref{thm:Pozzetti} applies.

Endow $(\partial \bbB^n)^3$ with the measure class $[\mu]$ associated with the volume form on $\mathcal{C}^0_3$. We will show that the $[\mu]$-essential image of $\phi^3$ is contained in $\mathcal{C}^0_3$. In fact, we will show that it is contained in the subset $\mathcal{C}^1_3$ of $\calC^0_3$. Since $\mathcal{C}^1_3$ is conull in $\mathcal{C}^0_3$, we view $[\mu]$ as the measure class associated with the volume form on $\mathcal{C}^1_3$.

As indicated above, we identify $\mathcal{C}^1_3$ with the fibered square of the map $\al$ from $(\partial \bbB^n)^{(2)}$ to $\calC_1$, which is naturally a subset of $(\partial \bbB^n)^{(2)} \times (\partial \bbB^n)^{(2)}$, even though $\mathcal{C}^1_3$ is a subset of $(\partial \bbB^n)^{3}$ by its original definition. This is clarified by the commutative diagram
\[
\begin{tikzcd}
(\partial \bbB^n)^{(2)} \times_{\mathcal{C}_1} (\partial \bbB^n)^{(2)}
\arrow[hookrightarrow]{r} \arrow[leftrightarrow]{d}{\simeq} &
\left\{(x,y),\, (x,z) \in (\partial \bbB^n)^{(2)}\ :\ y,z \neq x \right\}
\arrow[leftrightarrow]{d}{\simeq} \\
\mathcal{C}^1_3
\arrow[hookrightarrow]{r} &
\left\{(x,y,z) \in (\partial \bbB^n)^3\ :\  y,z \neq x \right\}
\end{tikzcd}
\]
in which the top two spaces are subsets of $(\partial \bbB^n)^{(2)} \times (\partial \bbB^n)^{(2)}$ and the bottom two spaces are subsets of $(\partial \bbB^n)^3$ containing the support of $[\mu]$. Notice that this diagram also commutes with applying $\phi^{(2)}\times \phi^{(2)}$ to the top line and $\phi^3$ to the bottom line.

Further, the assumption that $\al \circ \phi^{(2)}$ and $\psi\circ \al$ agree almost everywhere allows us to apply Lemma~\ref{lem:fiberedproduct} with:
\begin{align*}
X&=Y=X'=Y'=(\partial \bbB^n)^{(2)} & f&=g=\phi^{(2)} \\
Z&=Z'=\mathcal{C}_1 & h&=\psi
\end{align*}
This allows us to conclude that the essential image of $\phi^{(2)}\times \phi^{(2)}$ is contained in $(\partial \bbB^n)^{(2)} \times_{\mathcal{C}_1} (\partial \bbB^n)^{(2)}$ with respect to the fibered measure class of $(\partial \bbB^n)^{(2)} \times_{\mathcal{C}_1} (\partial \bbB^n)^{(2)}$ on $(\partial \bbB^n)^{(2)} \times (\partial \bbB^n)^{(2)}$. In view of the above discussion, it follows that, with respect to the measure class associated with $\mathcal{C}^1_3$ on $(\partial \bbB^n)^{3}$, the essential image of $\phi^3$ is contained in $\mathcal{C}^1_3$. This completes the proof.
\end{proof}

\subsection{Proof of Proposition~\ref{prop:SRJZ}}\label{ssec:proofSRJS}

Let $G$ be $\SU(n,1)$ for $n \ge 2$ and $\Gam < G$ be a lattice. Suppose that $H=\PU(n,1)$, $\conj{Z}<H$ is the center of the unipotent radical of a proper parabolic subgroup $Q<H$, and $\rho:\Gamma\to H$ is a homomorphism with unbounded, Zariski dense image. Assume that there exists a continuous homomorphism $\tau:P\to Q/\conj{Z}$ with one-dimensional kernel and a measurable map $\Phi:G\to H/\conj{Z}$ that is $(P \times \Gam)$-equivariant with respect to the left $\Gamma$-action via $\rho$ and the right $P$-action through $\tau$. We must show that $\rho$ extends to a continuous homomorphism from $G$ to $H$.

We will freely use the notation introduced in \S\ref{subsec:subgroups}.  Identifying $Q$ with $P/C$ we consider the natural map $\theta:P \to P/CZ\cong Q/\conj{Z}$. For a fixed $p_0\in P$, the map $G\to H/\conj{Z}$ given by $g\mapsto \Phi(gp_0^{-1})$ is $(P \times \Gam)$-equivariant with respect to the left $\Gamma$-action and the right $P$-action via $\tau\circ\inn(p_0)$. We can then replace $\tau$ with $\tau\circ\inn(p_0)$, hence Proposition~\ref{prop:autP} allows us to assume that $\tau$ is surjective, $\ker(\tau)=CZ$, and $\tau(P\cap D)=\theta(P\cap D)$.

The composition of $\Phi:G\to H/\conj{Z}$ with $H/\conj{Z}\to H/Q$ is right $P$-invariant, and hence gives rise to a $\Gam$-map $P \bs G\to H/Q$. Upon identifying $P \bs G$ with $H/Q\simeq\partial \bbB^n$, we consider the above composition as a $\Gamma$-equivariant measurable map $\phi:\partial \bbB^n\to\partial \bbB^n$. By Corollary~\ref{cor:pozzetti}(1), the map $\phi^2:(\partial\bbB^n)^2\to(\partial\bbB^n)^2$ restricts to a map $\phi^{(2)}:(\partial\bbB^n)^{(2)}\to(\partial\bbB^n)^{(2)}$.

Next we consider the map $\al$ obtained by composing the natural identification $(\partial \bbB^n)^{(2)} \simeq \mathcal{C}_{(2)}$ with the forgetful map $\al_2:\mathcal{C}_{(2)}\to \mathcal{C}_1$. In other words, $\al$ maps the pair of distinct points $(x,y)$ to the pair consisting of the unique chain through $x$ and $y$ and the point $x$. Identify $MA \bs G$ with $(\partial \bbB^n)^{(2)}$ and $\mathcal{C}_1$ with $(H/\conj{Z})/\theta(P\cap D)$, where the latter identification comes from the fact that $\theta(P\cap D)=\tau(P\cap D)$.

Our goal is now to prove the existence of the dashed arrows in the following diagram of $(P \times \Gam)$-equivariant measurable maps:
\[
\begin{tikzcd}
G \arrow{d} & & & \\
MA \bs G \arrow{r}{\simeq} \arrow{d} & (\partial\bbB^n)^{(2)} \arrow{r}{\phi^{(2)}} \arrow{d}{\al} &
(\partial\bbB^n)^{(2)} \arrow{d}{\al} & \\
(P\cap D) \bs G \arrow{r}{\simeq} \arrow[dashed, bend right=15]{rrr}{\psi^\prime} & \mathcal{C}_1 \arrow[dashed]{r}{\psi} & \mathcal{C}_1 \arrow{r}{\simeq} & (H/\conj{Z})/\theta(P\cap D)
\end{tikzcd}
\]
For this, the composed map $G\to (H/\conj{Z})/\theta(P\cap D)$ is $(P\cap D)$-invariant, since $\theta(P\cap D)=\tau(P\cap D)$, thus it descends to a map from $(P\cap D) \bs G$ to $(H/\conj{Z})/\theta(P\cap D)$, which proves the existence of $\psi^\prime$. The existence of $\psi$ is then obtained by pre- and post-composing with the corresponding isomorphisms. We thus have that $\al \circ \phi^{(2)}$ and $\psi\circ \al$ agree almost everywhere, and therefore we can apply Corollary ~\ref{cor:pozzetti}(2) and conclude that $\rho$ extends to a continuous homomorphism from $G$ to $H$. This proves the proposition. \qed

\begin{rem}
The use of incidence geometry to prove rigidity theorems goes back at least to Mostow's use of work of Tits in his proof of Mostow Rigidity \cite{Mostow}. Rigidity of chain preserving maps is older, going back to Cartan's 1932 paper \cite{Cartan}. An important application of rigidity of chain preserving maps in the study of representations of discrete groups was by Burger and Iozzi in \cite{BurgerIozzi}. The idea of exploiting triples of points on a chain goes back to earlier work of Toledo on rigidity of certain surface group representations into $\PU(n,1)$ \cite{Toledo}, and Toledo attributes this general idea to Thurston. As mentioned above, we cannot use Cartan's result because our boundary map is only measurable and we cannot use Burger--Iozzi's because they assume orientability of the map on chains, but Pozzetti chain rigidity theorem suffices for our purposes.
\end{rem}

\begin{rem}\label{rem:SOremark}
Margulis and Mohammadi proved a version of Theorem~\ref{thm:main} for cocompact lattices in $\SO(3,1)$ using incidence geometry. It is possible to use the methods of this section combined with results in \cite{BFMS} to produce the input for their incidence geometry result, i.e., to prove the existence of a \emph{circle preserving map} between boundaries. However, the proof of Theorem~\ref{thm:main} in the real hyperbolic case is easier to complete using compatibility as we did in \cite{BFMS}. More generally though, it is possible for one to apply Lemma~\ref{lem:Spar} above and use the methods of this paper to turn some problems about homomorphisms from lattices in $\SO_0(n,1)$ to incompatible targets into incidence geometry problems for boundary maps.
\end{rem}

\section{Algebraic groups associated with lattices in $\SU(n,1)$ and Theorem~\ref{thm:SRht}}\label{sec:Hodge}

We begin by establishing some notation we need for the proof of Theorem~\ref{thm:SRht}. Let $\Gam < \SU(n,1)$ be a lattice, $n \ge 2$. Following the discussion of \cite[\S3]{BFMS}, local rigidity (due to Calabi--Vesentini \cite{Calabi-Vesentini} in the cocompact case and Raghunathan \cite{RaghunathanRigid} otherwise) along with work of Vinberg \cite{VinbergDef} implies that the field $\ell = \Tr(\Ad(\Gam))$ is both a number field and a minimal field of definition for $\Gam$. Let $\conj{\bfG}$ denote the connected adjoint $\ell$-algebraic group defined by the Zariski closure of $\Gam$ under the adjoint representation.

Then $\conj{\bfG}$ is the adjoint form of a unique simply connected $\ell$-group $\bfG$ of type ${}^{2}\mathrm{A}_n$ \cite{TitsSS}. In other words, there is a quadratic extension $\ell^\prime / \ell$ and a central division algebra $D$ over $\ell^\prime$ of degree $d \mid (n+1)$ with involution $\sig$ of the second kind so that $\bfG$ is the special unitary group $\SU_{(n+1)/d}(D, h)$ for some nondegenerate $\sig$-hermitian form $h$ on $D^{(n+1)/d}$. Recall that $\sig$ is of the second kind if its restriction to the center of $D$ is the nontrivial Galois involution of $\ell^\prime / \ell$. The image of $\Gam$ under the adjoint representation lies in $\conj{\bfG}(\ell)$ and the kernel of $\SU(n,1) \to \PU(n,1)$ is cyclic of order $(n+1)$, hence we can replace $\Gam$ with a subgroup of finite index and assume that $\Gam < \bfG(\ell)$.

\medskip

\noindent
With this setup, we now prove Theorem~\ref{thm:SRht}.

\begin{proof}[Proof of Theorem~\ref{thm:SRht}]
Let $\Gam < \SU(n,1)$ be a lattice, and retain all previous definitions and notation from this section. As described above, we can assume that $\Gam < \bfG(\ell)$. For each place $v$ of $\ell$, we obtain a homomorphism
\[
\rho_v : \Gam \hookrightarrow G_v = \bfG(\ell_v),
\]
where $\ell_v$ is the completion of $\ell$ associated with $v$. Let $v_0$ be the place associated with the lattice embedding of $\Gam$ into $\SU(n,1)$.

Our first goal is to prove that $\rho_v$ is locally rigid for all $v$. Recall that local rigidity of $\rho_v$ is the vanishing of the cohomology group $H^1(\rho_v, \frakg_v)$, where $\frakg_v$ is the Lie algebra of $G_v$. The assumption that $\Gam < \bfG(\ell)$ implies that there is an $\ell$-form $\frakg_\ell$ of $\fraks\fraku(n,1)$ so that $\frakg_v \cong \frakg_\ell \otimes_\ell \ell_v$ for all $v$. Note that vanishing of $H^1(\rho_v, \frakg_v)$ is equivalent to vanishing of $H^1(\Gam, \frakg_\ell)$ since
\[
H^1(\rho_v, \frakg_v) \cong H^1(\Gam, \frakg_\ell) \otimes_\ell \ell_v.
\]
However, $H^1(\Gam, \frakg_\ell)$ is trivial since $H^1(\rho_{v_0}, \frakg_{v_0})$ is trivial by local rigidity of the lattice embedding of $\Gam$ into $\SU(n,1)$. This proves local rigidity of $\rho_v$ for all $v$.

We now make some technical reductions needed to apply some results as they are stated in the literature. First, without loss of generality we can pass to a finite index torsion-free subgroup of $\Gam$, since $\ell$, $\ell^\prime$, and $\bfG$ are commensurability invariants by \cite{VinbergDef}. When $\Gam$ is cocompact, we can therefore assume that $\bbB^n / \Gam$ is a compact K\"ahler manifold. When $\Gam$ is not cocompact, we can assume that $\bbB^n / \Gam$ is a smooth quasiprojective variety admitting a smooth toroidal compactification by a smooth divisor $D$ (e.g., see \cite{AMRT, Mok}).

More precisely in the noncompact case, we can assume by passing to a finite index subgroup that the cusps of $\bbB^n / \Gam$ are diffeomorphic to bundles over the punctured disk with fiber an abelian variety. The associated peripheral subgroup of $\Gam$ is a two-step nilpotent group with infinite cyclic center naturally realized as a torsion-free lattice in the unipotent radical of the Borel subgroup of $\PU(n,1)$ (cf.\ the structure of $U$ in \S\ref{subsec:subgroups}). The center of this nilpotent group is generated by the free homotopy class of a loop projecting to a loop around the puncture of the disk in the base of the bundle. One can then smoothly compactify $\bbB^n / \Gam$ to obtain a smooth projective variety by adding a certain abelian variety above the puncture in the disk. See Figure~\ref{fig:STC} and see \cite[\S4.2]{Holzapfel} for further details in this specific case.

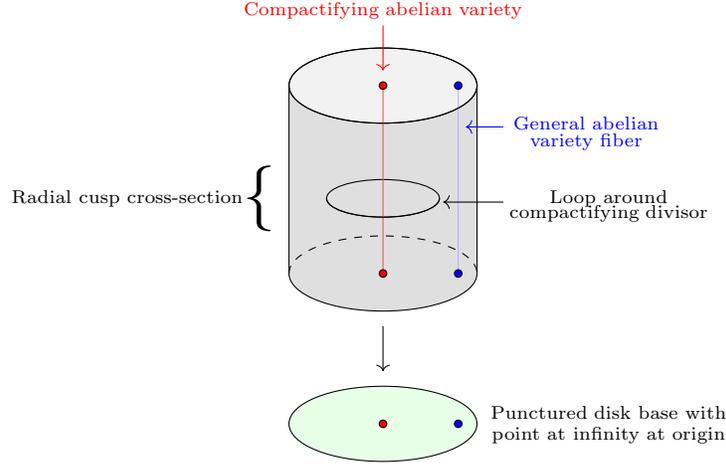
\begin{figure}
\begin{tikzpicture}
\fill [gray!25] (-1.25,0) -- (-1.25,-2.5) arc (180:360:1.25 and 0.5) -- (1.25,0) arc (0:180:1.25 and -0.5);
\draw [fill = gray!10] (0,0) ellipse (1.25 and 0.5);
\draw (0,-1.5) ellipse (0.75 and 0.25);
\draw (-1.25,0) -- (-1.25,-2.5);
\draw (-1.25,-2.5) arc (180:360:1.25 and 0.5);
\draw [dashed] (-1.25,-2.5) arc (180:360:1.25 and -0.5);
\draw (1.25,-2.5) -- (1.25,0);
\draw [fill = green!10] (0,-4.5) ellipse (1.25 and 0.5);
\draw [fill = red] (0,-4.5) circle (0.05);
\draw [red!70] (0,-2.5) -- (0,0);
\draw [fill = red] (0,-2.5) circle (0.05);
\draw [fill = red] (0,0) circle (0.05);
\draw [blue!30] (1,-2.5) -- (1,0);
\draw [fill = blue] (1,-2.5) circle (0.05);
\draw [fill = blue] (1,0) circle (0.05);
\draw [fill = blue] (1,-4.5) circle (0.05);
\draw (0,0) ellipse (1.25 and 0.5);
\draw [->] (0, -3.2) -- (0, -3.8);
\node at (3, -4.35) {{\tiny Punctured disk base with}};
\node at (3, -4.65) {{\tiny point at infinity at origin}};
\node at (-3.4, -1.5) {{\tiny Radial cusp cross-section}};
\node at (3, -1.5) {{\tiny Loop around}};
\node at (3, -1.7) {{\tiny compactifying divisor}};
\draw [->] (1.6, -1.55) -- (0.8, -1.55);
\node [blue] at (2.7, -0.5) {{\tiny General abelian}};
\node [blue] at (2.7, -0.75) {{\tiny variety fiber}};
\draw [blue, ->] (1.6, -0.55) -- (1.1, -0.55);
\draw [red, ->] (0, 0.8) -- (0, 0.2);
\node [red] at (0, 1) {{\tiny Compactifying abelian variety}};
\node at (-1.65,-1.5) {\Huge$\{$};
\draw (-0.75,-1.5) arc(180:360:0.75 and 0.25);
\end{tikzpicture}
\caption{Smooth compactification of a cusp.}\label{fig:STC}
\end{figure}

Suppose that $v$ is archimedean, i.e., that $\ell_v$ is $\bbR$ or $\bbC$. Rigidity of $\rho_v$ then implies that the real Zariski closure of $\rho_v(\Gam)$, namely  $G_v$, is a group of \emph{Hodge type}. See \cite[Lem.\ 4.5]{Simpson} when $\Gam$ is cocompact. In general see \cite[\S10.2.3]{Mochizuki}, which handles representations of fundamental groups of smooth quasiprojective varieties admitting a smooth compactification, and hence by the above assumptions applies to $\bbB^n / \Gam$. Since $\bfG$ is of absolute type $\mathrm{A}_n$, considering \cite[p.\ 50-51]{Simpson}, we conclude that $v$ is real and $G_v$ must be $\SU(r_v, s_v)$ for an appropriate pair $r_v, s_v$, completing the proof of the first two parts of the theorem.

It remains to prove that $\Gam$ is integral. Indeed, the last statement of Theorem~\ref{thm:SRht} is an immediate consequence of the previous statements along with the definition of arithmeticity. Integrality will follow from a theorem of Esnault--Groechenig \cite[Thm.\ 1.1]{EsnaultGroechenig} once we verify that their assumptions hold in our setting.

Since $\rho_v$ is locally rigid and has determinant one, it remains to verify that $\rho_v$ has quasi-unipotent monodromy at infinity. See \cite[\S2]{EsnaultGroechenig} for the precise definition. Indeed, loops around the compactifying divisor of $\bbB^n / \Gam$ are associated with central elements of peripheral subgroups of $\Gam$, which are naturally unipotent subgroups of $\bfG(\ell)$. These elements clearly remain unipotent under $\rho_v$, hence $\rho_v$ has quasi-unipotent monodromy at infinity. We then conclude from \cite[Thm.\ 1.1]{EsnaultGroechenig} that $\Gam$ is integral, which completes the proof of the theorem.
\end{proof}

\begin{rem}
With notation as in Theorem~\ref{thm:SRht}, if $\Gam < \PU(n,1)$ is a nonarithmetic lattice, there is another real place $v$ so that $\bfG(\ell_v) \cong \PU(p,q)$ for some $p + q = n+1$ and $p \ge q$. When $n = 2$, the only possibility is $(p,q) = (2,1)$. There are only two known commensurability classes of nonarithmetic lattices, and in each case $(p,q) = (3,1)$ for every noncompact $\bfG(\ell_v)$ \cite{Deraux3d}. We do not know if it is possible to have a nonarithmetic lattice with $\bfG(\ell_v) \cong \PU(p,q)$ with $q \ge 2$.
\end{rem}

\section{The proof of Theorem~\ref{thm:main}}\label{sec:mainpf}

\subsection{Equidistribution on $G / \Gam$}\label{ssec:Equi}

This subsection describes the equidistribution results needed to show that Theorem~\ref{thm:SR} applies to prove Theorem~\ref{thm:main}. Each result in this section has a direct analogue in \cite{BFMS}, and here we provide the necessary modifications for the $\SU(n,1)$ setting.

Throughout this section we fix a lattice $\Gam < G$. Recall that a measure $\mu$ on $G / \Gam$ is called \emph{homogeneous} if there exists a closed subgroup $S \le G$ and a closed $S$-orbit in $G/\Gamma$ such that $\mu$ is the push-forward of Haar measure on $S$ along this orbit. If $W\le S$ is a closed subgroup with respect to which $\mu$ is ergodic, we call the measure \emph{$W$-ergodic}. Given a homogeneous, $W$-ergodic measure $\mu$, we will refer to its support, $\supp(\mu)$, as a \emph{homogeneous, $W$-ergodic subspace} of $G/\Gamma$.

We ask the reader to recall the notation introduced in \S\ref{subsec:simple}. In particular, we have a fixed maximal compact subgroup $K\le G$ and we consider the symmetric space $K\bs G$. The locally symmetric space $K \bs G / \Gam$ will be denoted by $M$. In Definition~\ref{def:standard} we introduced the standard (almost simple) subgroups of $G$, namely certain copies in $G$ of the group $\SU(m,1)$ for $1\le m\le n$ and $\SO_0(m,1)$ for $2\le m\le n$ which, by Proposition~\ref{prop:almostsimple}, are representatives of conjugacy classes of all noncompact, connected, almost simple, closed subgroups of $G$.

In what follows ``geodesic subspace'' will always mean a properly immersed totally geodesic subspace of either $M$ or its universal cover $K\bs G$. Our goal in this subsection is to prove the following proposition, which translates the existence of infinitely many maximal geodesic subspaces of $M$ into a statement about measures on $G/\Gam$ that are invariant under a standard subgroup of $G$.

\begin{prop}[Cf.\ Prop.\ 3.1 in \cite{BFMS}]\label{prop:BFMS3.1}
The following are equivalent:
\begin{enumerate}

\item The complex hyperbolic space $M=K \bs G / \Gam$ contains infinitely many maximal geodesic subspaces of real dimension at least $2$.

\item There exists a standard subgroup $W< G$ and an infinite sequence $\{\mu_i\}$ of $W$-invariant, $W$-ergodic measures on $G / \Gam$ with proper support for which Haar measure on $G / \Gam$ is a weak-$*$ limit of the $\mu_i$.

\item There exists a standard subgroup $W<G$ and an infinite sequence $\{\mu_i\}$ of homogeneous, $W$-ergodic measures on $G / \Gam$ for which Haar measure on $G / \Gam$ is a weak-$*$ limit of the $\mu_i$.
\end{enumerate}
\end{prop}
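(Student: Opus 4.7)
My plan is to follow the structure of \cite[Prop.\ 3.1]{BFMS}, substituting the $\SU(n,1)$ standard subgroups of Definition~\ref{def:standard} for the real hyperbolic ones; the essential dynamical inputs will be Ratner's measure classification theorem and the Mozes--Shah theorem. The implication $(3) \Rightarrow (2)$ is immediate from the definitions, so the real content lies in the implications $(1) \Rightarrow (3)$ and $(2) \Rightarrow (1)$, which together with $(3) \Rightarrow (2)$ close the loop.

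For $(1) \Rightarrow (3)$, I would start by using Proposition~\ref{prop:standard} to lift each maximal geodesic subspace $N_i$ to a subset of $K \bs G$ of the form $K\bs K W_i g_i$ for some standard subgroup $W_i$ and $g_i \in G$. Since there are only finitely many types of standard subgroups (indexed by real/complex type and dimension), a pigeonhole argument lets me pass to a subsequence where $W_i = W$ is fixed. The closed orbit $W g_i \Gam \subset G / \Gam$ carries a well-defined homogeneous $W$-invariant probability measure $\mu_i$, finite because $N_i$ has finite volume in the finite-volume locally symmetric space $M$, and tautologically $W$-ergodic because $W$ acts transitively on its own orbit. The Mozes--Shah theorem then guarantees that any weak-$*$ accumulation point $\mu_\infty$ of $\{\mu_i\}$ is homogeneous, supported on a closed $L$-orbit with $W \le L \le G$. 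If $L \ne G$, then by Lemma~\ref{lem:AlignedToTG}(4), the image of this $L$-orbit in $M$ is a proper geodesic subspace containing all but finitely many of the $N_i$, contradicting maximality; therefore $L = G$ and $\mu_\infty$ is Haar measure.

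For $(2) \Rightarrow (1)$, I would first promote each $\mu_i$ to a homogeneous measure, essentially proving $(2) \Rightarrow (3)$: the standard subgroups are conjugates of $\SU(m,1)$ or $\SO_0(m,1)$ and hence generated by one-parameter unipotent subgroups, so Ratner's theorem realizes each $\mu_i$ as supported on a closed orbit $S_i g_i \Gam$ with $W \le S_i \lneq G$. Assume toward contradiction that $M$ has only finitely many maximal geodesic subspaces $N_1, \dots, N_k$ of dimension at least two. By Lemma~\ref{lem:AlignedToTG}(4), each $K\bs K S_i g_i$ is a proper geodesic subspace of $M$ of dimension at least two (since $\dim_\bbR(K\bs K W) \ge 2$ for every standard subgroup $W$), so each is contained in some $N_{j(i)}$. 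Pigeonhole then forces infinitely many $\mu_i$ to be supported in $\pi^{-1}(N_j) \subsetneq G/\Gam$ for some fixed $j$, where $\pi : G/\Gam \to M$ is the natural projection; this contradicts weak-$*$ convergence of $\mu_i$ to Haar measure.

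The main obstacle is the Mozes--Shah step in $(1) \Rightarrow (3)$: I need to simultaneously track that, after pigeonholing to a single type, infinitely many distinct maximal $N_i$ are still in play, and that Lemma~\ref{lem:AlignedToTG}(4) correctly transports a closed $L$-orbit with $W \lneq L \lneq G$ in $G/\Gam$ into a \emph{proper} geodesic subspace of $M$ strictly larger than each individual $K\bs K W g_i$. The rest is an adaptation of standard machinery, but this translation between measure-theoretic homogeneity in $G/\Gam$ and geometric containment of totally geodesic subspaces in $M$ is the place where the specific structure of $\SU(n,1)$ enters and must be handled with care.
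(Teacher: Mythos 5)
Your direction $(2) \Rightarrow (1)$ (together with the Ratner step promoting the $\mu_i$ to homogeneous measures) is essentially sound and close to what the paper does, but the $(1) \Rightarrow (3)$ direction has genuine gaps, beginning with your construction of the $\mu_i$. You put the homogeneous measure on the $W$-orbit $Wg_i\Gamma/\Gamma$, asserting it is closed and has finite volume ``because $N_i$ has finite volume.'' This fails in general: the subgroup of $\Gamma$ stabilizing the lifted geodesic subspace is a lattice in a conjugate of the normalizer $N$ of $W$, and although $N/W$ is compact, the intersection of that lattice with $W$ need not be a lattice in $W$ --- elements typically act with a nontrivial ``rotational'' part in $N/W$, and the kernel of the projection of a lattice in $N$ to the compact group $N/W$ can even be trivial --- so $Wg_i\Gamma/\Gamma$ need be neither closed nor of finite $W$-invariant volume. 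This is exactly why Lemma~\ref{lem:TGtomsr}\eqref{num:TGtomsr3} works through $N$: finite volume of $Z_i$ makes $\Gamma\cap N^{g_i^{-1}}$ a lattice, Raghunathan's criterion \cite{Rag} gives closedness of the $N$-orbit, and then the $W$-ergodic decomposition plus Ratner produces an intermediate subgroup $W\le S_i\le N$ whose closed orbit carries the homogeneous $W$-ergodic measure; statement (3) is phrased with such measures precisely because the naive $W$-orbit measures may not exist.

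Two further gaps occur at the limit step. First, you apply Mozes--Shah to ``any weak-$*$ accumulation point,'' but on the noncompact space $G/\Gamma$ the accumulation point could be the zero measure (mass escaping into the cusps), in which case Mozes--Shah gives nothing; the paper rules this out with Dani--Margulis nondivergence \cite{DaniMargulis} combined with the fact that every closed geodesic subspace meets a fixed compact set containing the compact core (Lemma~\ref{lem:nomassescape} and Corollary~\ref{cor:nomassescape}), and your sketch never addresses this. Second, Mozes--Shah (Theorem~\ref{thm:MozesShah}) yields $\supp(\mu_i)\subseteq g_i\,\supp(\mu_\infty)$ for translating elements $g_i\to e$, not containment in $\supp(\mu_\infty)$ itself, and your proposed contradiction ``with maximality'' only bites when $\pi(g_i\,\supp(\mu_\infty))$ has strictly larger dimension than $N_i$. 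In the main remaining case --- the stabilizer of $\mu_\infty$ lies between $W$ and its normalizer $N$, so the limiting geodesic subspace has the same type as the $N_i$ --- maximality gives no contradiction at all; one must instead show, as the paper does, that $g_i\in N$ and that all the $Z_i$ coincide with $\pi(\supp(\mu_\infty))$, contradicting that infinitely many of them are distinct. Your closing plan to show the limit orbit's image is ``strictly larger than each individual $K\backslash KWg_i$'' is exactly what fails in this case, so the argument needs the paper's normalizer analysis rather than an appeal to maximality.
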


As noted in \cite{BFMS}, work of Ratner \cite{RatnerDuke} shows that (2) and (3) are equivalent, and therefore it remains to prove that (1) and (3) are equivalent. For this, we closely follow the strategy in \cite[\S 3.1]{BFMS}, adapting the statements therein to the current setting.

For the remainder of this section, fix the notation $\pi:G/\Gamma\to K\bs G/\Gamma=M$ for the natural projection map. Note that $\pi$ is a proper map. We begin with the following lemma.

\begin{lemma}\label{lem:TGtomsr}
The following hold:
\begin{enumerate}

\item Let $S$ be a closed subgroup of $G$ such that $W \le S$ for some standard subgroup $W < G$, and suppose there exists $h\in G$ for which $Sh\Gamma/\Gamma$ is a closed $S$-orbit. Then $Z=\pi(Sh\Gamma/\Gamma)$ is a closed totally geodesic subspace of $M$ of real dimension at least $2$. Up to normalization, the $\dim(Z)$-volume of $Z$ is the push-forward of Haar measure on $Sh\Gamma/\Gamma$ via the projection map $\pi$. \label{num:TGtomsr1}

\item Under the assumptions of part \eqref{num:TGtomsr1}, $Z=M$ if and only if $S=G$ and $Z=\pi(Wg\Gamma/\Gamma)$ for some $g\in G$ if and only if $W \le S\le N$, where $N$ is the normalizer of $W$ in $G$. In the latter case, $Nh\Gamma/\Gamma$ is also closed with projection $\pi(Nh\Gamma/\Gamma)=Z$. \label{num:TGtomsr2}

\item Conversely, every totally geodesic subspace $Z$ in $M$ of real dimension at least $2$ has finite measure. Moreover, for any such $Z$ there is a standard subgroup $W$ of $G$ with normalizer $N$, an intermediate subgroup $W \le S \le N$, and an element $h\in G$ such that $\Gam \cap S^{h^{-1}}$ is a lattice in $S^{h^{-1}}$ and $S h \Gamma/\Gamma$ is a homogeneous, $W$-ergodic subspace of $G/\Gamma$ for which $Z=\pi(W h\Gamma/\Gamma)=\pi(S h\Gamma/\Gamma)$.\label{num:TGtomsr3}

\end{enumerate}
\end{lemma}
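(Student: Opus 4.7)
The plan is to prove the three parts in order using Lemma~\ref{lem:AlignedToTG} as the main tool, with Ratner's orbit closure theorem providing the key input for the converse direction in part \eqref{num:TGtomsr3}.

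For part \eqref{num:TGtomsr1}, the projection $\pi$ is simply the quotient by the left action of the compact group $K$, so the image of the closed orbit $Sh\Gamma/\Gamma$ is automatically closed in $M$. Identifying $M$ with $K\bs G/\Gamma$, we have $Z = K\bs KSh\Gamma/\Gamma$, and Lemma~\ref{lem:AlignedToTG}\eqref{lem:AlignedToTG3} identifies $K\bs KSh \subseteq K\bs G$ as a totally geodesic subspace whose volume measure is, up to normalization, the pushforward of Haar measure on $S$. Passing to the quotient by $\Gamma$ on the right yields both the totally geodesic claim for $Z$ and the volume formula, and $\dim Z \geq 2$ holds because every standard subgroup has a symmetric space of real dimension at least two.

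For part \eqref{num:TGtomsr2}, to prove $Z = M$ implies $S = G$ (the converse being trivial), note that $K\bs KSh \cdot \Gamma = K\bs G$ forces $K\bs KSh = K\bs G$, since a countable union of proper totally geodesic subspaces has measure zero; Lemma~\ref{lem:AlignedToTG}\eqref{lem:AlignedToTG3} then forces $S = G$. For the second equivalence, the backward direction follows from $K\bs KS = K\bs KW$ via Lemma~\ref{lem:AlignedToTG}\eqref{lem:AlignedToTG4}. For the forward direction, any lift $K\bs KWg$ of $Z$ must equal some $\Gamma$-translate $K\bs KSh\gamma$ of the canonical lift $K\bs KSh$; comparing the types of these totally geodesic subspaces via Lemma~\ref{lem:AlignedToTG}\eqref{lem:AlignedToTG3} yields $K\bs KS = K\bs KW$, which by Lemma~\ref{lem:AlignedToTG}\eqref{lem:AlignedToTG2} places $S$ in the stabilizer $N$ of $K\bs KW$. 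For the closedness of $Nh\Gamma/\Gamma$: since $N/W$ is compact (Lemma~\ref{lem:AlignedToTG}\eqref{lem:AlignedToTG2}) and $W \le S$, $N/S$ is also compact, so $Nh\Gamma/\Gamma$ is the image under the continuous action map of a compact-by-closed set and hence closed, with projection to $M$ equal to $Z$ by the same identification.

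For part \eqref{num:TGtomsr3}, given a closed totally geodesic subspace $Z \subseteq M$ of dimension at least two, lift $Z$ to some $\tilde Z \subseteq K\bs G$; by Proposition~\ref{prop:standard}, $\tilde Z = K\bs KWh$ for a standard subgroup $W$ and some $h \in G$. Apply Ratner's orbit closure theorem to the $W$-orbit $Wh\Gamma/\Gamma$ in $G/\Gamma$, noting that $W$ is generated by unipotents as a noncompact almost simple Lie group: this yields a closed subgroup $W \le S \le G$ with $\overline{Wh\Gamma/\Gamma} = Sh\Gamma/\Gamma$ and $\Gamma \cap S^{h^{-1}}$ a lattice in $S^{h^{-1}}$, so $Sh\Gamma/\Gamma$ carries a finite $S$-invariant measure. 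Continuity of $\pi$ gives $\pi(Sh\Gamma/\Gamma) \subseteq Z$, so the connected totally geodesic subspace $K\bs KSh$ is contained in the countable union $K\bs KWh\Gamma$ of $\Gamma$-translates of $K\bs KWh$; since each translate has the same dimension as $K\bs KW$, a dimension and measure argument forces $K\bs KS = K\bs KW$, whence $S \le N$ by Lemma~\ref{lem:AlignedToTG}\eqref{lem:AlignedToTG2}. Ergodicity of $W$ on $Sh\Gamma/\Gamma$ is a standard consequence of the Howe--Moore theorem, and finite volume of $Z$ follows from finite volume of $Sh\Gamma/\Gamma$ via the volume formula of part \eqref{num:TGtomsr1}. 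The main obstacle is the dimension and measure argument showing that $K\bs KS$ cannot strictly contain $K\bs KW$; once this is in hand, Lemma~\ref{lem:AlignedToTG}\eqref{lem:AlignedToTG2} immediately gives $S \le N$ and the remaining conclusions.
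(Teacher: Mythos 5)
Your overall structure is sound and parts (1)--(2) essentially follow the paper's route, but there is one genuine gap in part \eqref{num:TGtomsr3}: the ergodicity step. You claim that $W$-ergodicity of the homogeneous measure on $Sh\Gam/\Gam$ ``is a standard consequence of the Howe--Moore theorem.'' Howe--Moore gives ergodicity of noncompact closed subgroups acting on $S/\Lambda$ only when $S$ is semisimple; here, once $W \le S \le N$ is known, Lemma~\ref{lem:AlignedToTG}\eqref{lem:AlignedToTG4} gives $S^+=W$, so $S$ is a compact extension of $W$ and is in general not semisimple. For such groups $W$-ergodicity can genuinely fail --- this is precisely why the paper's proof warns that $Ng\Gam/\Gam$ ``may not be $W$-ergodic, even though it is $W$-invariant,'' and instead takes a $W$-ergodic component of the homogeneous measure and applies Ratner's measure classification to produce $S$ with $W\le S\le N$. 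Your statement happens to be true in your setting, but for a different reason: since $Sh\Gam/\Gam$ is the \emph{closure} of $Wh\Gam/\Gam$ and $W=S^+$ is normal in $S$ with $S/W$ compact, ergodicity of $W$ on $S/(S\cap h\Gam h^{-1})$ is equivalent to density of the image of $S\cap h\Gam h^{-1}$ in the compact group $W\bs S$, which is equivalent to density of the $W$-orbit. Either supply this argument or fall back on the paper's ergodic-decomposition route; as written, the cited justification does not apply.

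Apart from this, your part \eqref{num:TGtomsr3} is a legitimately different argument: you obtain closedness of $Sh\Gam/\Gam$, the lattice property of $\Gam\cap S^{h^{-1}}$, and finiteness of the volume of $Z$ (via part \eqref{num:TGtomsr1}) all from Ratner's orbit closure theorem applied to $Wh\Gam/\Gam$, whereas the paper first proves $\mathrm{vol}(Z)<\infty$ geometrically via the thick--thin decomposition \cite{GeLev}, deduces that $\Gam\cap N^{g^{-1}}$ is a lattice, invokes \cite[Thm.~1.13]{Rag} to get the closed $N$-orbit, and only then uses Ratner's measure classification on a $W$-ergodic component. Your route is shorter and avoids the geometric input, at the cost of the ergodicity issue above and of the ``dimension and measure'' step you flag, which does work: $K\bs KSh$ is connected and contained in the countable union of the $d$-dimensional complete totally geodesic translates $K\bs KWh\gamma$, so Baire category forces $K\bs KSh\subseteq K\bs KWh\gamma$ for some $\gamma$, and $W\le S$ plus equality of dimensions upgrades this to equality. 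Two smaller points in part \eqref{num:TGtomsr2}: your assertion that any lift $K\bs KWg$ of $Z$ equals a $\Gam$-translate of $K\bs KSh$ is true but needs this same Baire/dimension argument (both $K\bs KWg\Gam$ and $K\bs KSh\Gam$ equal the full preimage of $Z$, and components cannot be matched by fiat when $Z$ is immersed with self-intersections); and ``comparing types yields $K\bs KS=K\bs KW$'' requires using the containment $K\bs KW\subseteq K\bs KS$ coming from $W\le S$, since type alone only gives $K\bs KS=K\bs KW k$ for some $k\in K$. The paper instead conjugates and applies Lemma~\ref{lem:AlignedToTG}\eqref{lem:AlignedToTG1} to conclude $S^+=W$; either way is fine once these half-steps are written out.
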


\begin{proof}
The fact that $Z$ is closed follows from the fact that $\pi$ is a proper map and the assumption that $Sh\Gamma/\Gamma$ is closed. By Lemma \ref{lem:AlignedToTG}\eqref{lem:AlignedToTG3}, $K\bs KSh$ is a totally geodesic subspace of $K\bs G$, thus $Z$, which is the image of $K \bs KS$ under the covering map $K\bs G \to M$, is a totally geodesic subspace of $M$. The statement about the measure also follows from Lemma \ref{lem:AlignedToTG}\eqref{lem:AlignedToTG3}, and this proves part \eqref{num:TGtomsr1}.

Next we consider part \eqref{num:TGtomsr2}. Clearly $S=G$ implies $Z=M$ and the fact that $Z=M$ implies $S=G$ follows by applying Lemma \ref{lem:AlignedToTG}\eqref{lem:AlignedToTG1} in the special case $W=G$. If $W \le S\le N$ then
\[
Z=\pi(Sh\Gamma/\Gamma)=\pi(Wh\Gamma/\Gamma),
\]
by Lemma \ref{lem:AlignedToTG}\eqref{lem:AlignedToTG4}. Conversely, assume
$Z=\pi(Wg\Gamma/\Gamma)$ for some $g\in G$. Then both $K\bs KSh$ and $K\bs KWg$ cover $Z$ in $K\bs G$, thus they are totally geodesic subspaces of the same type. Therefore $K\bs KSh=K\bs KWg_0$ for some $g_0\in G$ by Proposition~\ref{prop:standard}, and Lemma \ref{lem:AlignedToTG}\eqref{lem:AlignedToTG3} implies that $K\bs KSh=K\bs KS^+h$. Hence we obtain the sequence of equalities
\[
K\bs Khg_0^{-1}(S^+)^{g_0h^{-1}}=K\bs KS^+hg_0^{-1} =K\bs KShg_0^{-1} = K\bs KW,
\]
and conclude that $(S^+)^{g_0h^{-1}}$ preserves $K\bs KW$ and acts transitively on it. Thus by Lemma \ref{lem:AlignedToTG}\eqref{lem:AlignedToTG1} we see that $(S^+)^{g_0h^{-1}}=W$.

Since $W\le S$, we have $(S^+)^{g_0h^{-1}}=W=W^+\le S^+$ and deduce that $S^+=(S^+)^{g_0h^{-1}}=W$. Since $S$ normalizes $S^+$ it follows that $S\le N$. Then $W$ is cocompact in $N$, so $S$ is cocompact in $N$ by Lemma \ref{lem:AlignedToTG}\eqref{lem:AlignedToTG2}. It follows that $Nh\Gamma/\Gamma$ is closed in $G/\Gamma$, since $Sh\Gamma/\Gamma$ is closed by hypothesis. That $\pi(Nh\Gamma/\Gamma)=Z$ follows from Lemma \ref{lem:AlignedToTG}\eqref{lem:AlignedToTG4}, and this completes the proof of part \eqref{num:TGtomsr2}.

We now prove part \eqref{num:TGtomsr3}. Fix a totally geodesic subspace $Z$ in $M$ of real dimension at least $2$. The fact that $Z$ has a finite volume is a well-known consequence of the existence of a thick-thin decomposition. See \cite{GeLev} for detailed argument in real hyperbolic space that is easily adapted to any rank one symmetric space.

The preimage of $Z$ under the $\Gamma$-invariant map $K\bs G \to M$ is a collection of mutually disjoint totally geodesic subspaces on which $\Gamma$ acts. We fix one of these, which by Proposition~\ref{prop:standard} is of the form $K\bs KWg$ for some $g\in G$ and a standard subgroup $W\le G$. By Lemma \ref{lem:AlignedToTG}\eqref{lem:AlignedToTG2}, the subgroup of $G$ that stabilizes this totally geodesic subspace is $N^{g^{-1}}$, thus the subgroup of $\Gamma$ stabilizing it is $\Gamma \cap N^{g^{-1}}$. By Lemma \ref{lem:AlignedToTG}\eqref{lem:AlignedToTG3} we have that $K\bs KWg=K\bs KNg=K\bs KgN^{g^{-1}}$ which is isomorphic as an $N^{g^{-1}}$-space to $(K^{g^{-1}}\cap N^{g^{-1}}) \bs N^{g^{-1}}$. It follows that
\[
Z=K\bs KgN^{g^{-1}}\Gamma/ \Gamma \simeq (K^{g^{-1}}\cap N^{g^{-1}}) \bs N^{g^{-1}}/ (\Gamma \cap N^{g^{-1}}).
\]
Since $Z$ has a finite volume and $K^{g^{-1}}\cap N^{g^{-1}}$ is a compact subgroup of $N^{g^{-1}}$, we conclude that $\Gamma \cap N^{g^{-1}}$ is a lattice in $N^{g^{-1}}$. By \cite[Thm.~1.13]{Rag} we obtain that $Ng\Gamma/\Gamma$ is closed in $G/\Gamma$ and conclude that it is a closed homogeneous $N$-orbit of finite volume for which the associated Haar measure on this orbit is a homogeneous measure.

However $N g \Gamma/\Gamma$ may not be $W$-ergodic, even though it is $W$-invariant. To complete the proof, it remains to show that there exists an intermediate subgroup $W \le S\le N$ and an $h\in G$ such that $S h\Gamma/\Gamma$ is a homogeneous, $W$-ergodic subspace of $G/\Gamma$. Let $\mu$ be a $W$-ergodic measure in the ergodic decomposition of Haar measure on $N g\Gamma/\Gamma$, and let $S$ denote the stabilizer of $\mu$ in $N$. Then $W\le S\le N$ by $W$-invariance, and Ratner's theorem implies that $\mu$ is $S$-homogeneous. Write the corresponding closed homogeneous space as $S h\Gamma/\Gamma$ for some $h\in G$. Since $S h\Gamma/\Gamma\simeq S/(\Gam \cap S^{h^{-1}})$ we see that $\Gam \cap S^{h^{-1}}$ is a lattice in $S^{h^{-1}}$. Since $h$ is in the $N$-homogeneous space $Ng\Gamma/\Gamma$, we have that $Nh\Gamma/\Gamma=Ng\Gamma/\Gamma$. Using Lemma \ref{lem:AlignedToTG}\eqref{lem:AlignedToTG3} twice, we see that
\[
\pi(Wh\Gamma/\Gamma)=\pi(Sh\Gamma/\Gamma)=\pi(Nh\Gamma/\Gamma)=\pi(Ng\Gamma/\Gamma)=\pi(Wg\Gamma/\Gamma)=Z,
\]
which completes the proof.
\end{proof}

We now collect some useful facts about limits of homogeneous, $W$-ergodic measures that we need in the proof of Proposition~\ref{prop:BFMS3.1}. Despite the difference in presentation, the next three results are direct analogues of results in the real hyperbolic case \cite[Thm 3.3]{BFMS} and use an essentially identical argument. All of these results are relatively straightforward consequences of work of Dani--Margulis \cite[Thm.\ 6.1]{DaniMargulis} and Mozes--Shah \cite{MozesShah}. We first show that there is no escape of mass for the sequence of measures under consideration.

\begin{lemma}\label{lem:nomassescape}
Let $W<G$ be a closed, connected, almost simple subgroup of $G$ that is generated by unipotent elements. If $\{\mu_i\}$ is a sequence of homogeneous, $W$-ergodic probability measures on $G/\Gamma$ that weak-$*$ converges to a measure $\mu$ in the space of all finite Radon measures, then $\mu$ is not the zero measure.
\end{lemma}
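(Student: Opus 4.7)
The plan is to establish uniform non-escape of mass for the sequence $\{\mu_i\}$ by applying the Dani--Margulis quantitative non-divergence theorem for unipotent flows; the conclusion that the weak-$*$ limit $\mu$ is nonzero then follows immediately.

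Since $W$ is a noncompact almost simple closed subgroup of $G$ generated by unipotents, fix a nontrivial one-parameter Ad-unipotent subgroup $\{u_t\} \le W$. Each $\mu_i$ is in particular a $\{u_t\}$-invariant probability measure on $G/\Gam$. I then invoke the uniform version of the Dani--Margulis non-divergence theorem \cite[Thm.\ 6.1]{DaniMargulis}: for every $\epsilon > 0$ there exists a compact set $K_\epsilon \subseteq G/\Gam$, depending only on $\epsilon$ and on $(G,\Gam,\{u_t\})$, such that $\nu(K_\epsilon) \ge 1 - \epsilon$ for every $\{u_t\}$-invariant probability measure $\nu$ on $G/\Gam$. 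Applied to each $\mu_i$, this yields $\mu_i(K_\epsilon) \ge 1 - \epsilon$ uniformly in $i$. The derivation of the measure-theoretic statement from the classical pointwise Dani--Margulis non-divergence for individual $\{u_t\}$-trajectories proceeds by integrating the pointwise estimate against $\mu_i$ and invoking the Birkhoff ergodic theorem; the crucial feature is that $K_\epsilon$ depends only on the Lie-theoretic data, not on the particular invariant measure.

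Having established this uniform bound, I pass to the limit. The standard Portmanteau-type upper semicontinuity of mass on compact sets under weak-$*$ convergence of finite Radon measures, proved by approximating the characteristic function of $K_\epsilon$ from above by continuous compactly supported functions, gives $\mu(K_\epsilon) \ge \limsup_i \mu_i(K_\epsilon) \ge 1-\epsilon$. Hence $\mu(G/\Gam) \ge 1-\epsilon$, and since $\epsilon > 0$ was arbitrary, $\mu$ is in fact a probability measure and in particular is nonzero.

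The main obstacle is verifying the uniformity of $K_\epsilon$ across the entire family $\{\mu_i\}$: for any single probability measure, concentration on a large compact set is automatic from inner regularity, so the substance lies precisely in the fact that $K_\epsilon$ can be chosen independently of $i$. This is the quantitative content of Dani--Margulis, whose constants are dictated by the geometry of $(G, \Gam, \{u_t\})$ rather than by the particular orbit or measure under consideration; once this is in hand, the remainder of the argument is routine. An alternative route would be to invoke the Mozes--Shah theorem \cite{MozesShah} directly to conclude that the limit of a sequence of homogeneous probability measures invariant under groups generated by Ad-unipotents remains a probability measure, but this essentially packages the same non-divergence input.
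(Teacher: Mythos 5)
There is a genuine gap: the ``uniform version'' of Dani--Margulis that you invoke is false as stated. It is not true that for every $\epsilon>0$ there is a compact $K_\epsilon\subseteq G/\Gam$ with $\nu(K_\epsilon)\ge 1-\epsilon$ for \emph{every} $\{u_t\}$-invariant probability measure $\nu$: when $\Gam$ is nonuniform there are unipotent-invariant probability measures supported arbitrarily deep in the cusps (the classical example is the family of closed-horocycle measures in $\SL_2(\bbZ)\bs\SL_2(\bbR)$, which converge vaguely to the zero measure), so no single compact set can capture most of the mass of all of them. What \cite[Thm.\ 6.1]{DaniMargulis} actually gives is uniformity over unipotent one-parameter subgroups and over \emph{base points $x$ lying in a fixed compact set} $F$: only then is there a compact $F'$ with the time-average bound for all $T$. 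To convert this into a bound $\mu_i(F')\ge 1-\epsilon$ via Birkhoff, you must produce, for each $i$, a generic point of $\mu_i$ inside the \emph{same} compact set $F$ --- and this is precisely the step your proposal skips and which cannot be had for free, since the $\mu_i$ are not arbitrary unipotent-invariant measures.

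This is where the paper uses the specific structure of the hypotheses: after conjugating $W$ to a standard subgroup, each $\mu_i$ is homogeneous and $W$-ergodic, so by Lemma~\ref{lem:TGtomsr}\eqref{num:TGtomsr1} its projection $\pi_*\mu_i$ is a multiple of the volume measure on a closed totally geodesic subspace $Z_i\subseteq M$ of dimension at least $2$. Choosing once and for all a compact $C\subset M$ whose interior contains the compact core, every such $Z_i$ meets $C$ in a set of positive $\mu_i$-measure, so $F=\pi^{-1}(C)$ contains a $U$-generic point $x_i$ for each $\mu_i$; Dani--Margulis anchored at $F$ plus Birkhoff then gives $\mu_i(F')\ge 1/2$ uniformly, and your final vague-convergence step (which is fine for compact sets) finishes the argument. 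Your fallback via Mozes--Shah has the same problem: \cite{MozesShah} does not by itself exclude escape of mass (the zero measure is a possible limit in general), which is why the paper invokes Mozes--Shah only \emph{after} this lemma establishes $\mu\neq 0$. So the overall skeleton (Dani--Margulis $+$ Birkhoff $+$ passage to the limit) matches the paper, but the uniformity claim at its core is false without the geometric anchoring of generic points in a fixed compact set, and supplying that anchoring is the actual content of the proof.
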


\begin{proof}
Since the claim is invariant under conjugation, we can assume by Proposition~\ref{prop:almostsimple} that $W$ is a standard subgroup of $G$.

Let $C$ be any fixed compact set in $M$ whose interior contains the compact core of $M$. See \cite[Thm.\ 10.5]{BGS} for the existence of a compact core in this setting. Then $C$ has the property that $C\cap Z$ contains a nonempty open subset of $Z$ for any closed geodesic subspace $Z$ of $M$. Set $F=\pi^{-1}(C)$.

Applying \cite[Thm.\ 6.1]{DaniMargulis} for $\epsilon=1/2$, there exists a compact set $F^\prime \subseteq G/\Gamma$ such that
\begin{equation}\label{eqn:DM}
\frac{1}{2}\le \frac{1}{T}\lambda\left(\left\{t\in[0,T]\mid u_tx\in F^\prime\right\}\right)=\frac{1}{T}\int_0^T\chi_{F^\prime}(u_t x)dt,
\end{equation}
for every one-parameter unipotent subgroup $\{u_t\}$ of $G$, every $x\in F$, and every $T\ge 0$, where $\chi_{F^\prime}$ is the characteristic function of $F^\prime$ and $\lambda$ is Lebesgue measure on $\R$. We claim that $\mu(F^\prime)\ge 1/2$, which proves that $\mu$ is not the zero measure.

Fix a one-parameter unipotent subgroup $U=\{u_t\}$ of $W$. Then $W$-ergodicity and the Howe--Moore theorem imply that each $\mu_i$ is $U$-ergodic. The measures $\mu_i$ are homogeneous and $W$-ergodic, so Lemma \ref{lem:TGtomsr}\eqref{num:TGtomsr1} implies that for each $i$ there exists a closed totally geodesic subspace $Z_i$ of $M$ with real dimension at least $2$ such that $\pi_*\mu_i$ is a constant multiple of the volume measure on $Z_i$. Since $C\cap Z_i$ contains an open subset of $Z_i$, we see that
\[
\pi_*\mu_i(C)=\mu_i(\pi^{-1}(C))>0.
\]
In particular, there exists a $U$-generic point $x_i\in F$ for each $\mu_i$. The Birkhoff ergodic theorem applied to $\chi_{F^\prime}$ combined with Equation \eqref{eqn:DM} then implies that $\mu_i(F^\prime)\ge 1/2$. Therefore, $\mu(F^\prime)\ge 1/2$, which completes the proof.
\end{proof}

\begin{cor}\label{cor:nomassescape}
Under the assumptions of Lemma \ref{lem:nomassescape}, $\mu$ is a homogeneous, $W$-ergodic probability measure on $G/\Gamma$. Moreover, $\mu$ is ergodic with respect to any nontrivial subgroup generated by unipotent elements of its stabilizer in $G$.
\end{cor}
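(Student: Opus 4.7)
The plan is to verify in turn that $\mu$ is a probability measure, that $\mu$ is homogeneous and $W$-ergodic, and that $\mu$ satisfies the final ergodicity statement. First I would show $\mu$ is a probability measure by adapting the proof of Lemma~\ref{lem:nomassescape}: that argument applies the Dani--Margulis recurrence theorem only with $\epsilon = 1/2$, but the same argument with arbitrary $\epsilon > 0$ yields, for each such $\epsilon$, a compact set $F'_\epsilon \subseteq G/\Gamma$ with $\mu_i(F'_\epsilon) \geq 1 - \epsilon$ for all $i$. Applying weak-$*$ convergence of Radon measures to the compact set $F'_\epsilon$ gives $\mu(F'_\epsilon) \geq \limsup_i \mu_i(F'_\epsilon) \geq 1 - \epsilon$, and letting $\epsilon \to 0$ forces $\mu(G/\Gamma) = 1$. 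Since weak-$*$ convergence preserves invariance under any fixed element of $G$, the limit $\mu$ is also $W$-invariant.

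Next, to establish homogeneity and $W$-ergodicity, I would apply the theorem of Mozes and Shah~\cite{MozesShah}. Since $W$ is almost simple and generated by unipotent elements, fix a nontrivial one-parameter Ad-unipotent subgroup $U_0 \leq W$. By the Howe--Moore theorem applied to the almost simple group $W$, each $W$-ergodic measure $\mu_i$ is automatically $U_0$-ergodic. The Mozes--Shah theorem, combined with the already-established fact that $\mu$ has full mass, then implies that $\mu$ is $H$-homogeneous for some closed subgroup $U_0 \leq H \leq G$ and that $\mu$ is $U_0$-ergodic. In particular, $\mu$ is $W$-ergodic since $U_0 \leq W$.

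For the final statement, let $V \leq \Stab_G(\mu)$ be a nontrivial subgroup generated by unipotent elements. I would argue $V$-ergodicity by viewing $(G/\Gamma, \mu)$ as the homogeneous space $(H/\Lambda, m_H)$, where $\Lambda = H \cap g_0 \Gamma g_0^{-1}$ for some $g_0 \in G$ with $\supp(\mu) = H g_0 \Gamma/\Gamma$, and isolating a nontrivial Ad-unipotent one-parameter subgroup $U_1 \leq V$. The Mautner phenomenon applied to the action on $H/\Lambda$ then provides $U_1$-ergodicity, from which $V$-ergodicity follows immediately because any $V$-invariant set is in particular $U_1$-invariant. The hard part will be this last step: carefully verifying that a nontrivial Ad-unipotent one-parameter subgroup of $V$ is actually available inside $H$ and acts ergodically on $H/\Lambda$. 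This requires unpacking how $V$ embeds in $\Stab_G(\mu)$ and invoking the correct form of the Mautner phenomenon for the (possibly non-reductive) subgroup $H$.
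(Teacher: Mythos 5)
Your first step is fine, and in fact it is a legitimate alternative to the paper's route to full mass: the paper's Lemma~\ref{lem:nomassescape} only records $\mu\neq 0$ and then invokes \cite[Cor.\ 1.3]{MozesShah} to upgrade to a probability measure, whereas running the Dani--Margulis argument with arbitrary $\epsilon$ gives no escape of mass directly. The problems start afterwards. First, Mozes--Shah does not give you ergodicity under your chosen $U_0\le W$: what \cite[Cor.\ 1.3]{MozesShah} yields is that the nonzero limit is a homogeneous probability measure, invariant and ergodic under \emph{some} one-parameter unipotent subgroup of its stabilizer $S$, and that subgroup need not lie in $W$ (the paper's own use of \cite[Thm.\ 1.1]{MozesShah} in Theorem~\ref{thm:MozesShah} only extracts invariance under conjugates $U_j^{g_i^{-1}}$ with $g_i\to e$, not ergodicity under the original groups). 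So your inference ``$\mu$ is $U_0$-ergodic, hence $W$-ergodic since $U_0\le W$'' has a hole exactly at the point where some extra input is needed.

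Second, and more seriously, your plan for the ``moreover'' clause has a genuine gap that you partly acknowledge. A nontrivial subgroup $V\le S$ generated by unipotent elements need not contain any one-parameter Ad-unipotent subgroup at all (it can be the cyclic group generated by a single unipotent element), so ``isolating $U_1\le V$'' is impossible in general; and even when such a $U_1$ exists, the Mautner phenomenon on $H/\Lambda$ for a possibly non-reductive $H$ is precisely the unverified hard step. The paper's proof avoids all of this with one structural observation special to the rank-one setting: since the set of $W$-invariant measures is weak-$*$ closed, $W\le S$, hence $W=W^+\le S^+$, and by the dichotomy from \S\ref{subsec:simple} the group $S^+$ is then connected, almost simple and noncompact (it cannot be unipotent). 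Mozes--Shah gives that $\mu$ is ergodic under some one-parameter unipotent subgroup of $S$, hence a fortiori $S^+$-ergodic, and then the Howe--Moore theorem applied to $S^+$ acting on $(\supp\mu,\mu)$ gives ergodicity under \emph{every} noncompact closed (or even unbounded, e.g.\ cyclic unipotent) subgroup of $S^+$ at once; since any nontrivial subgroup generated by unipotent elements of $S$ is a noncompact subgroup of $S^+$, this settles both the $W$-ergodicity and the general statement in one stroke. To repair your proposal you should replace the Mautner step (and the claim of $U_0$-ergodicity from Mozes--Shah) by this $S^+$ plus Howe--Moore argument.
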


\begin{proof}
Let $S$ denote the stabilizer of $\mu$ in $G$. Then $\mu$ is not the zero measure by Lemma~\ref{lem:nomassescape}, thus \cite[Cor.\ 1.3]{MozesShah} shows that $\mu$ is a homogeneous, $S^+$-ergodic probability measure on $G/\Gamma$. Since any nontrivial subgroup generated by unipotents in $S$ is a noncompact subgroup of $S^+$, $\mu$ is ergodic with respect to any such subgroup by the Howe--Moore theorem. In particular, this applies to $W$, as $W\le S$ because the set of $W$-invariant measures is weak-$*$ closed.
\end{proof}

We now apply \cite[Thm.\ 1.1]{MozesShah} to understand the relationship between the support of the $\mu_i$ and the support of $\mu$.

\begin{thm}\label{thm:MozesShah}
Let $W<G$ be a closed, connected, almost simple subgroup generated by unipotent elements and $\{\mu_i\}$ be a sequence of homogeneous, $W$-ergodic measures on $G/\Gamma$ that weak-$*$ converges to $\mu$. Then there exist a sequence of elements $\{g_i\}$ in $G$ and a natural number $i_0$ such that for all $i\ge i_0$, the measures $g_i\mu$ are homogeneous, $W$-ergodic probability measures on $G/\Gamma$ with $\supp(\mu_i)\subseteq g_i\supp(\mu)$.
\end{thm}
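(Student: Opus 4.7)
The plan is to deduce this theorem directly from the main result of Mozes--Shah \cite[Thm.\ 1.1]{MozesShah}, using as input the fact already established in Corollary \ref{cor:nomassescape} that the limit $\mu$ is a homogeneous, $W$-ergodic probability measure on $G/\Gamma$. This places the full sequence $\{\mu_i\} \cup \{\mu\}$ in the class of algebraic, unipotent-ergodic probability measures to which Mozes--Shah applies.

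First I would fix notation. Let $H$ denote the $G$-stabilizer of $\mu$, so that $\mu$ is the Haar probability measure on a closed orbit $Hx \subseteq G/\Gamma$ and $W \leq H$; similarly let $H_i$ denote the $G$-stabilizer of $\mu_i$, so that $\mu_i$ is the Haar probability measure on a closed orbit $H_i x_i$ with $W \leq H_i$. Since $W$ is connected, almost simple, and noncompact, it contains a nontrivial one-parameter Ad-unipotent subgroup $\{u_t\}$, and by Howe--Moore both $\mu$ and each $\mu_i$ are $\{u_t\}$-ergodic.

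The Mozes--Shah theorem \cite[Thm.\ 1.1]{MozesShah} now applies and yields a sequence $g_i \to e$ in $G$ and an integer $i_0$ such that for all $i \geq i_0$ one has
\[
g_i^{-1} H_i g_i \leq H \quad \text{and} \quad \supp(\mu_i) \subseteq g_i \supp(\mu).
\]
The measure $g_i \mu$ is by construction the Haar probability measure on the closed $(g_i H g_i^{-1})$-orbit $(g_i H g_i^{-1}) \cdot g_i x$, hence homogeneous, and the chain of inclusions $W \leq H_i \leq g_i H g_i^{-1}$ shows that $g_i \mu$ is $W$-invariant.

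It remains to verify the $W$-ergodicity of $g_i \mu$. Under the measure-space isomorphism $L^2(g_i \mu) \cong L^2(\mu)$ induced by left translation by $g_i$, the $W$-action on $L^2(g_i \mu)$ corresponds to the $(g_i^{-1} W g_i)$-action on $L^2(\mu)$; note that $g_i^{-1} W g_i \leq H$ is a noncompact almost simple subgroup containing the one-parameter unipotent $g_i^{-1}\{u_t\}g_i$. Since $\mu$ is $H$-homogeneous of finite volume and is $\{u_t\}$-ergodic, an application of the Howe--Moore theorem to the unipotent $g_i^{-1}\{u_t\}g_i$ on $(Hx,\mu)$ delivers the desired ergodicity of $g_i \mu$ under $W$. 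The main obstacle is the bookkeeping between the stabilizers $H_i$, $H$, and their $g_i$-conjugates needed to extract $W$-ergodicity rather than mere $(g_i W g_i^{-1})$-ergodicity; this proceeds in direct parallel to the argument given in \cite[Thm.\ 3.3]{BFMS} for the real hyperbolic case, only with $\SU(n,1)$ in place of $\SO_0(n,1)$.
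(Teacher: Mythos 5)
Your overall route---\cite[Thm.\ 1.1]{MozesShah} followed by a Howe--Moore ergodicity transfer---is the same as the paper's, but there is a genuine gap at the central step. You assert that \cite[Thm.\ 1.1]{MozesShah} directly ``yields a sequence $g_i \to e$ \dots such that $g_i^{-1}H_ig_i \le H$ and $\supp(\mu_i)\subseteq g_i\supp(\mu)$.'' That theorem neither produces the $g_i$ unconditionally nor gives an inclusion of the full invariance groups: its ``moreover'' clause requires you first to choose points $x_i\in\supp(\mu_i)$ that are generic for the acting unipotent subgroups and converge to a point $x_\infty\in\supp(\mu)$, the elements $g_i$ being defined by $g_i x_\infty = x_i$; and its conclusion is that for large $i$ the measure $\mu$ is invariant under the $g_i$-conjugates of those acting one-parameter unipotent subgroups, together with $\supp(\mu_i)\subseteq g_i\supp(\mu)$. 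With your single group $\{u_t\}<W$ this only yields $g_i^{-1}\{u_t\}g_i\le\Lambda(\mu)$, which does not give that $g_i\mu$ is $W$-invariant---and $W$-invariance is exactly what both the statement of the theorem and your Howe--Moore step require. You acknowledge this ``bookkeeping between the stabilizers'' and defer it to \cite{BFMS}, but that bookkeeping is the actual content of the proof, so as written the argument is incomplete.

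The paper closes the gap by fixing finitely many unipotent subgroups $U_1,\dots,U_s$ generating $W$: Howe--Moore gives that each $\mu_i$ is $U_j$-ergodic for every $j$, so points simultaneously generic for all the $U_j$ are dense in $\supp(\mu_i)$; weak-$*$ convergence lets one choose such $x_i$ converging to a fixed $x_\infty\in\supp(\mu)$ and set $g_ix_\infty=x_i$ with $g_i\to e$. Applying \cite[Thm.\ 1.1]{MozesShah} to each $U_j$ with these same base points gives, for large $i$, invariance of $\mu$ under every conjugate $g_i^{-1}U_jg_i$, hence under the group they generate, which is the corresponding conjugate of $W$; the second assertion of Corollary~\ref{cor:nomassescape} then upgrades this invariance to ergodicity, i.e.\ $g_i\mu$ is $W$-ergodic. (One could instead recover the inclusion of the conjugate of $W$ in $H$ from the support inclusion by a connectedness/Baire-category argument, but even that uses the relation $x_i=g_ix_\infty$ coming from the generic-point setup you omitted.) Finally, in your ergodicity step note that $H$ need not be semisimple, so Howe--Moore should be applied through $H^+$---or one can simply invoke the second statement of Corollary~\ref{cor:nomassescape}, which packages exactly this.
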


\begin{proof}
We fix unipotent subgroups $U_1,\dots, U_s$ that generate $W$. By the Howe--Moore theorem, $\mu_i$ is $U_j$-ergodic for every $i\in\N$ and every $1\le j\le s$. Therefore, for each $i$, the set of points for which $\mu_i$ is $U_j$-generic for all $1\le j\le s$ is of full measure and hence is dense in $\supp(\mu_i)$.

By Corollary \ref{cor:nomassescape}, $\mu$ is a nonzero homogeneous $W$-ergodic probability measure and $\supp(\mu)$ is a nonempty, closed homogeneous subspace of $G/\Gamma$. Fix a point $x_\infty$ in $\supp(\mu)$. Then by the above we can find a sequence of points $\{x_i\}$ converging to $x_\infty$ such that, for each $i$, $x_i\in \supp(\mu_i)$ and $x_i$ is a $U_j$-generic point for every $1\le j\le s$. We also fix a sequence $\{g_i\}$ of elements of $G$ converging to the identity such that $g_ix_\infty=x_i$ for each $i$.

Applying \cite[Thm.\ 1.1]{MozesShah} to each unipotent subgroup $U_j$, we find a natural number $i_j$ such that for all $i\ge i_j$ the measure $\mu$ is $U_j^{g_i^{-1}}$-invariant and $\supp(\mu_i)\subseteq g_i\supp(\mu)$. Let $i_0=\max\{i_1,\dots,i_s\}$. We now have that for every $i\geq i_0$, $\supp(\mu_i)\subseteq g_i\supp(\mu)$ and we are left to show that for every such $i$, the measure $g_i\mu$, which is clearly a homogeneous probability measure, is in fact $W$-ergodic. Equivalently, we are left to show that for every $i\geq i_0$, the measure $\mu$ is $W^{g_i^{-1}}$-ergodic.

Fix $i\ge i_0$. We have that $\mu$ is $U_j^{g_i^{-1}}$-invariant for every $1\le j\le s$. We conclude that $\mu$ is $W^{g_i^{-1}}$-invariant, as $W^{g_i^{-1}}$ is the group generated by the unipotent subgroups $U_j^{g_i^{-1}}$, $1\le j\le s$. It follows from Corollary \ref{cor:nomassescape} that indeed $\mu$ is $W^{g_i^{-1}}$-ergodic, which completes the proof of the theorem.
\end{proof}

We finally have all of the necessary ingredients to prove Proposition \ref{prop:BFMS3.1}, which has an essentially identical proof to that of \cite[Prop.\ 3.1]{BFMS}. As remarked immediately following its statement, it suffices to show that (1) and (3) are equivalent.

\begin{proof}[Proof of Proposition \ref{prop:BFMS3.1}]
We first prove that (3) implies (1). Recall that $\{\mu_i\}$ is a sequence of homogeneous, $W$-ergodic measures with weak-$*$ limit $\mu$, which is Haar measure on $G/\Gamma$. Then $\pi_*\mu$ is the volume measure on $M$ and hence $\supp(\pi_* \mu)=M$. As the measures $\mu_i$ are homogeneous, $\pi_*\mu_i$ is a constant multiple of the volume form on some closed totally geodesic subspace of $M$ by Lemma \ref{lem:TGtomsr}\eqref{num:TGtomsr1}, therefore $\supp(\pi_*\mu_i)$ is contained in some closed maximal geodesic subspace $Z_i$ of $M$. Then $M=\supp(\pi_*\mu)$ is contained in closure of $\cup Z_i$, and we conclude that there must be infinitely many distinct $Z_i$ in this union. This implies (1).

Finally, we show that (1) implies (3). Assume there are infinitely many distinct closed maximal totally geodesic subspaces $\{Z_i\}$ of $M$. By Lemma \ref{lem:TGtomsr}\eqref{num:TGtomsr3}, for every $i$ there exists a standard subgroup $W_i\le G$, a homogeneous $W_i$-ergodic measure $\mu_i$ on $G/\Gamma$, and an element $h_i\in G$ such that $Z_i=\pi(W_i h_i\Gamma/\Gamma)=\pi(X_i)$, where $X_i=\supp(\mu_i)$. Since the collection of standard subgroups of $G$ is finite, we may and do pass to a subsequence for which the subgroups $W_i$ all coincide. We denote this common standard subgroup by $W$. Upon passing to a further subsequence, which we still denote by $\{\mu_i\}$, we assume that the $\mu_i$ weak-$*$ converge to a probability measure $\mu$ that is homogeneous and $W$-ergodic by Corollary \ref{cor:nomassescape}. The proof will be complete once we show that $\mu$ is Haar measure on $G/\Gamma$.

For a contradiction, assume $\mu$ is not Haar measure. Since $\mu$ is homogeneous, $X_\infty=\supp(\mu)$ is a closed homogeneous subspace $Sh\Gamma/\Gamma$ of $G/\Gamma$, where $h\in G$ and $S$ is the stabilizer of $\mu$. By the assumption that $\mu$ is not Haar measure on $G/\Gamma$, $S<G$ is a proper subgroup. By Theorem \ref{thm:MozesShah}, there exists a sequence of elements $\{g_i\}$ in $G$ and a natural number $i_0$ such that for all $i\ge i_0$, the measures $g_i\mu$ are homogeneous, $W$-ergodic probability measures on $G/\Gamma$ with $\supp(\mu_i)\subseteq g_i\supp(\mu)$. Upon passing again to a subsequence, we assume that $i_0=1$, thus for every $i$, $X_i\subset g_i X_\infty$ and $g_i\mu$ is a homogeneous, $W$-ergodic probability measure on $G/\Gamma$.

We will find a contradiction by showing that the spaces $Z_i$ all coincide with $\pi(X_\infty)$, contradicting the assumption that they are all distinct. From now on we fix a natural number $i$ and will argue that $Z_i=\pi(X_\infty)$.

By Lemma \ref{lem:TGtomsr}\eqref{num:TGtomsr1}, $\pi(g_i X_\infty)$ is a totally geodesic subspace of $M$, and by Lemma \ref{lem:TGtomsr}\eqref{num:TGtomsr2} it is a proper subspace of $M$, since the stabilizer of $g_i\mu$, namely $S^{g_i}$, is a proper subgroup of $G$. From $X_i\subseteq g_i X_\infty$ we have that $Z_i=\pi(X_i)\subseteq\pi(g_i X_\infty)$ and, by the maximality assumption on the totally geodesic subspace $Z_i$, we conclude that $Z_i=\pi(g_i X_\infty)$. Since $g_i\mu$ is $W$-invariant and its stabilizer is $S^{g_i}$, we have that $W\le S^{g_i}$. We also have $\pi(S^{g_i}g_ih\Gam/\Gam)=\pi(W h_i\Gamma/\Gamma)$, by the sequence of equations
\[
\pi(S^{g_i}g_ih\Gam/\Gam)=\pi(g_iSh\Gam/\Gam)=\pi(g_i X_\infty)=Z_i=\pi(W h_i\Gamma/\Gamma).
\]
Lemma \ref{lem:TGtomsr}\eqref{num:TGtomsr2} implies that $W \le S^{g_i}\le N$, where $N$ is the normalizer of $W$.

It follows from Lemma~\ref{lem:AlignedToTG}\eqref{lem:AlignedToTG4} that $W=(S^{g_i})^+$. As $W\le S$, we also have that $W^{g_i}\le (S^{g_i})^+=W$, hence $W^{g_i}=W$ and in particular, $g_i\in N$. Using Lemma \ref{lem:TGtomsr}\eqref{num:TGtomsr2} again, we conclude that $N g_ih\Gamma/\Gamma$ is a closed homogeneous subspace of $G/\Gamma$ such that $\pi(S^{g_i}g_ih\Gam/\Gam)=\pi(N g_ih\Gamma/\Gamma)$. Conjugating by $g^{-1} \in N$, the inclusion $W \le S^{g_i}\le N$ implies that $W \le S \le N$ and another application of Lemma \ref{lem:TGtomsr}\eqref{num:TGtomsr2} gives $\pi(X_\infty)=\pi(Sh\Gam/\Gam)=\pi(N h\Gamma/\Gamma)$. Therefore,
\[
Z_i=\pi(g_iX_\infty)=\pi(S^{g_i}g_ih\Gam/\Gam)=\pi(N g_ih\Gamma/\Gamma)=\pi(N h\Gamma/\Gamma)=\pi(X_\infty).
\]
Thus $Z_i=\pi(X_\infty)$, and this completes the proof that $\mu$ is Haar measure, hence we have shown that (1) implies (3).
\end{proof}

\begin{rem}
It is also worthy of mention here that there has been considerable previous work on equidistribution in the context of Shimura varieties and special subvarieties. For example, see deep work of Clozel--Ullmo on equidistribution for strongly special subvarieties \cite{ClozelUllmo}. Also see \cite{Ullmo, Yafaev, UllmoYafaev, MollerToledo, KoziarzMaubon2} for subsequent results in this direction.
\end{rem}

\subsection{Setup for the proof of Theorem~\ref{thm:main}}\label{ssec:arithpfsetup}

The purpose of this section is to collect the necessary final setup so that we can apply Theorem \ref{thm:SR} in the proof of Theorem~\ref{thm:main}, which is given in \S \ref{ssec:arithpf}.

\medskip

Let $G = \SU(n,1)$, and suppose that $\Gam < G$ is a lattice such that the complex hyperbolic orbifold $K \bs G / \Gam$ contains infinitely many maximal geodesic subspaces. Pass to a subsequence so that they are all either real or complex hyperbolic subspaces of the same type. By Lemma \ref{lem:TGtomsr}\eqref{num:TGtomsr3}, there is a fixed standard subgroup $W < G$ with normalizer $N$ and elements $g_i \in G$ so that
\[
\Del_i = N^{g_i^{-1}} \cap \Gam,
\]
is a lattice in $N^{g_i^{-1}}$ associated with the $i^{th}$ maximal geodesic subspace. Specifically, $\Del_i$ acts on the totally geodesic subspace $K \bs K N g_i^{-1}$ of $K \bs G$ with finite covolume. We will need the following general result.

\begin{prop}[Cf.\ Prop.\ 3.3 \cite{BFMS}]\label{prop:setup}
In addition to the above assumptions, suppose that $k$ is a local field and $\bfH$ is a connected adjoint $k$-algebraic group. If $\rho : \Gam \to \bfH(k)$ is a representation so that $\rho(\Del_i)$ has proper Zariski closure in $\bfH(k)$ for infinitely many $i$, then there is a $k$-vector space $V$ of dimension at least two, an irreducible representation of $\bfH$ on $V$, and a $W$-invariant, ergodic measure $\nu$ on the bundle $(G \times \bbP(V)) / \Gam$ that projects to Haar measure on $G / \Gam$.
\end{prop}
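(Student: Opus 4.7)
The plan is to follow the blueprint of \cite[Prop.~3.3]{BFMS}, adapted to the $\SU(n,1)$ setting. It divides into three steps: first, extract an irreducible target representation $V$ of $\bfH$ together with a uniform family of $\rho(\Delta_i)$-invariant lines in $\bbP(V)$; second, use these lines to lift the equidistributing measures $\mu_i$ on $G/\Gamma$ produced by Proposition~\ref{prop:BFMS3.1} to $W$-invariant probability measures $\nu_i$ on the flat $\bbP(V)$-bundle $(G \times \bbP(V))/\Gamma$; and third, pass to a weak-$*$ limit and extract a $W$-ergodic component that still projects to Haar measure.

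For the first step, let $\bfL_i$ denote the Zariski closure of $\rho(\Delta_i)$ in $\bfH$, which by hypothesis is a proper $k$-algebraic subgroup for infinitely many $i$. Chevalley's theorem provides, for each such $i$, a finite-dimensional $k$-representation of $\bfH$ and a line stabilized by $\bfL_i$. Decomposing into irreducible components and using that $\bfH$ is connected adjoint semisimple, so every nontrivial irreducible $k$-representation has $k$-dimension at least two, one projects to obtain a stabilized line in an irreducible component $V_i$ with $\dim_k V_i \ge 2$. Because there are only finitely many conjugacy classes of proper parabolic subgroups of $\bfH$ and only finitely many relevant $\bfH$-orbit types on the associated flag varieties, a Noetherian/pigeonhole argument then allows one to pass to a subsequence and fix a single irreducible $k$-representation $V$ of $\bfH$ with $\dim_k V \ge 2$ together with lines $[v_i] \in \bbP(V)$ satisfying $\rho(\Delta_i)\cdot[v_i] = [v_i]$ for every $i$ in the subsequence.

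Next I would construct the lifts. Form the flat bundle $Y = (G \times \bbP(V))/\Gamma$ over $G/\Gamma$, where $\Gamma$ acts diagonally by right translation on $G$ and via $\rho$ on $\bbP(V)$. By Lemma~\ref{lem:TGtomsr} the closed orbit $X_i = W g_i \Gamma/\Gamma$ carries a $W$-invariant, $W$-ergodic probability measure $\mu_i$, and since $W^{g_i^{-1}} \cap \Gamma \le N^{g_i^{-1}} \cap \Gamma = \Delta_i$, the $\rho(\Delta_i)$-invariance of $[v_i]$ makes the assignment $w g_i \gamma \mapsto (w g_i,\,\rho(\gamma)^{-1}[v_i])\,\Gamma$ well-defined and yields a continuous $W$-invariant $\Gamma$-equivariant section $\sigma_i : X_i \to Y$. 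Push-forward by $\sigma_i$ then produces a $W$-invariant, $W$-ergodic probability measure $\nu_i = (\sigma_i)_* \mu_i$ on $Y$ whose projection to $G/\Gamma$ is $\mu_i$.

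Finally, Proposition~\ref{prop:BFMS3.1} gives $\mu_i \to \mathrm{Haar}$ in weak-$*$, Lemma~\ref{lem:nomassescape} rules out escape of mass on $G/\Gamma$, and compactness of the fiber $\bbP(V)$ transfers this no-escape to $Y$, so a subsequence of the $\nu_i$ converges weak-$*$ to a $W$-invariant probability measure $\nu$ on $Y$ that projects to Haar. Since Haar is $W$-ergodic on $G/\Gamma$ by Howe--Moore applied to the noncompact subgroup $W$, uniqueness of the $W$-ergodic decomposition applied to $\nu$ forces almost every ergodic component of $\nu$ to project to Haar, and selecting any such component produces the required measure. The main obstacle is the first step: Chevalley alone produces representations that depend on $i$, and the passage to a single irreducible $V$ of $k$-dimension at least two requires careful algebraic finiteness arguments about the conjugacy classes of the subgroups $\bfL_i$ and their orbits on flag varieties of $\bfH$.
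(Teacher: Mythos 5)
Your overall architecture (find a projective point fixed by each $\rho(\Del_i)$ in a single representation, build sections over the homogeneous pieces, take a weak-$*$ limit and an ergodic component) is the right one, and your final limiting/ergodic-component argument is essentially the paper's. But the first step, which you yourself flag as the main obstacle, has a genuine gap, and it is precisely the step the paper handles differently. Applying Chevalley's theorem to each Zariski closure $\bfL_i$ separately produces a representation depending on $\bfL_i$, and your proposed pigeonhole has nothing to pigeonhole on: the $\bfL_i$ are arbitrary proper $k$-subgroups, not parabolics, and there are in general infinitely many conjugacy classes of proper algebraic subgroups (and no finite list of ``orbit types on flag varieties'' attached to them), so one cannot pass to a subsequence with a common Chevalley representation this way. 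The paper's fix is a uniform Chevalley-type construction: pass to a subsequence so that the (suitably enlarged, nontrivial) subgroups $\bfJ_i$ containing the closures of the $\rho(\Del_i)$ all have the same dimension $d>0$, and take $V$ inside $\Lambda^d\frakh$, where $\frakh$ is the Lie algebra of $\bfH$; the line $\Lambda^d\frakj_i$ is then a point of $\bbP(\Lambda^d\frakh)$ fixed by $\rho(\Del_i)$ (indeed by $N_\bfH(\bfJ_i)(k)$), and this single representation works for every $i$ simultaneously, with the required irreducible piece of dimension at least two extracted afterwards as in the reference the paper cites. Without this uniformity device your argument does not produce the single $V$ the statement requires.

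Two smaller corrections to the measure-theoretic half. First, you place the measures $\mu_i$ on the $W$-orbits $Wg_i\Gam/\Gam$, but these orbits need not be closed or carry finite $W$-invariant measures; Lemma~\ref{lem:TGtomsr}(3) only provides a closed, finite-volume, $W$-ergodic homogeneous orbit for an intermediate subgroup $S_i$ with $W \le S_i \le N^{g_i^{-1}}$ (up to conjugacy), and it is over $S_i/\Del_i$ that the paper builds its measurable sections; since the relevant stabilizer lattice lies in $\Del_i$, your well-definedness computation for the section goes through verbatim in that setting. Second, Proposition~\ref{prop:BFMS3.1} is an existence statement and does not by itself assert that your particular sequence $\mu_i$ converges to Haar measure; one must pass to a subsequence and run the Mozes--Shah/maximality argument from its proof (this is what the paper's invocation of ``Ratner's theorem, maximality, and Proposition~\ref{prop:BFMS3.1}'' packages). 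Your use of fiber compactness to prevent escape of mass upstairs and of extremality of Haar measure to see that almost every ergodic component of the limit projects to Haar is correct and matches the intended argument.
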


\begin{proof}
The proof is almost exactly the same as \cite[Prop.\ 3.3]{BFMS}, so we only sketch the proof. The assumption on $\rho(\Del_i)$ implies that we can pass to a subsequence to assume that the Zariski closures of the $\rho(\Del_i)$ are all contained in proper, nontrivial, $k$-algebraic subgroups $\bfJ_i < \bfH$ all of which have the same dimension $d > 0$. Let $\frakh$ be the Lie algebra of $H$ over $k$ and $\frakj_i \subset \frakh$ the subalgebra associated with $\bfJ_i$. We then take $V$ to be the $d^{th}$ exterior power of $\frakh$, so each $\frakj_i$ defines a point in $\bbP(V)$.

Let $S_i$ be the closed subgroup of $N^{g_i^{-1}}$ determined by Lemma \ref{lem:TGtomsr}\eqref{num:TGtomsr3}. Then $\Del_i$ is a lattice in $S_i$. Consequently, for each $i$ we obtain a measurable section $S_i / \Del_i \to (G \times \bbP(V)) / \Gam$. Let $\nu_i$ be the push-forward of Haar measure on $S_i / \Del_i$ and $\nu$ be an ergodic component of the weak-$*$ limit of the $\nu_i$. Ratner's theorem, the assumption that the geodesic submanifolds are distinct and maximal, and Proposition \ref{prop:BFMS3.1} implies that $\nu$ is a $W$-invariant, $W$-ergodic measure that projects to Haar measure on $G / \Gam$.
\end{proof}

\subsection{The proof of Theorem~\ref{thm:main}}\label{ssec:arithpf}

Suppose that $\Gam < \SU(n,1)$ is a lattice so that $\bbB^n / \Gam$ contains infinitely many maximal geodesic subspaces. Let $\ell$ be the adjoint trace field of $\Gam$ and $\bfH$ be the connected adjoint algebraic group associated with $\Gam$ as in \S\ref{sec:Hodge}. Given a place $v$ of $\ell$, let $\ell_v$ be the completion of $\ell$ at this place and $\rho_v : \Gam \to \bfH(\ell_v) = H_v$ be the natural inclusion. We have a place $v_0$ so that $H_{v_0} \cong \PU(n,1)$ and $\rho_{v_0}(\Gam)$ is the lattice embedding. To prove that $\Gam$ is arithmetic we must show that $\rho_v(\Gam)$ is precompact for all $v \neq v_0$.

Assume $\rho_v(\Gam)$ is not precompact for some $v\neq v_0$. Let $k=\ell_v$, and $W$ and $\{\Del_i\}$ be as in \S\ref{ssec:arithpfsetup}. We clearly have that $\rho_v(\Del_i)$ has proper Zariski closure in $H_v$. Then Proposition~\ref{prop:setup} applies to produce a vector space $V$ as in the conclusion of the proposition and a $W$-invariant measure $\nu$ on $(G \times \bbP(V)) / \Gam$ that projects to Haar measure on $G / \Gam$.

Then Theorem~\ref{thm:SRht} along with Propositions~\ref{prop:compk} and~\ref{prop:comppu} imply that either the pair $(k, H_v)$ is compatible with $G$, or $k = \bbR$ and $H_v \cong \PU(n,1)$. Since $\rho_v(\Gam)$ is unbounded, Theorem~\ref{thm:SR} applies and $\rho_v$ extends to a continuous homomorphism $\wh{\rho}_v : G \to H_v$, and this is a contradiction as explained in \cite[\S 3.2]{BFMS}. Therefore $\rho_v(\Gam)$ must be precompact for all $v\neq v_0$, which proves that $\Gam$ is arithmetic. \qed

\section{Examples, other results, and final comments}\label{sec:Final}

\subsection{Submanifolds of arithmetic manifolds}\label{ssec:arithex}

In this section, we briefly give three examples exhibiting some of the possibilities for geodesic submanifolds of arithmetic quotients of $\bbB^n$.

\medskip

The general construction is as follows. See \cite{TitsSS}. Let $\ell$ be a totally real number field and $\ell^\prime$ a totally imaginary quadratic extension of $\ell$. Suppose that $D$ is a central simple division algebra over $\ell^\prime$ of degree $d$ admitting an involution $\tau$ of \emph{second kind}, i.e., so that the restriction of $\tau$ to $\ell^\prime$ is the nontrivial Galois involution of $\ell^\prime / \ell$. For $r \ge 1$, $\tau$ extends to an anti-involution of the matrix algebra $\M_r(D)$ by $\tau$-conjugate transposition, denoted $x \mapsto x^*$. Note that under any embedding of $\ell^\prime$ in $\bbC$, this involution extends to complex conjugate transposition on $\M_r(D) \otimes_{\ell^\prime} \bbC \cong \M_{rd}(\bbC)$.

An element $h \in \M_r(D)$ is called \emph{$\tau$-hermitian} if $h^* = h$, and then we can define the $\ell$-algebraic unitary group $\bfG$ with $\ell$-points
\[
\bfG(\ell) = \{x \in \SL_r(D)\ :\ x^* h x = h \}.
\]
Choosing a maximal order $\calO$ of $D$, we obtain an arithmetic group
\[
\Gam_\calO^1 = \{x \in \SL_r(\calO)\ :\ x^* h x = h \} < \bfG(\ell).
\]
Let $n = rd - 1$. Choosing $D$ and $h$ so that
\[
\mathrm{Res}_{\ell / \bbQ}(\bfG)(\bbR) \cong \SU(n, 1) \times \SU(n+1)^{[\ell : \bbQ] - 1},
\]
one has that the projection of $\Gam_\calO^1$ to $\SU(n,1)$ is an arithmetic lattice. For any $n \ge 2$, all arithmetic subgroups of $\SU(n,1)$ are commensurable with some such $\Gam_\calO^1$.

\begin{ex}\label{ex:Simple}
When $d = 1$, we have $D = \ell$. Then $h$ is a $\tau$-hermitian form on an $\ell^\prime$-vector space $V$ of dimension $n + 1$, $\calO$ is the ring of integers of $\ell^\prime$, and $\Gam_\calO^1$ is sometimes called an arithmetic lattice of \emph{simplest type}. For concreteness, we take $\ell = \bbQ$, $\ell^\prime = \bbQ(\al)$ with $\al^2 = -1$, and $h$ to be the hermitian form fixed in \S\ref{subsec:simple}. Then
\[
\Gam_\calO^1 = \{g \in \SL_{n+1}(\bbZ[\al])\ :\ g^* h g = h \},
\]
is the subgroup of $\SL_{n+1}(\bbZ[\al])$ preserving the form $h$ and $\bfG$ is a $\bbQ$-algebraic group with $\bfG(\bbR) \cong \SU(n,1)$.

We claim that $\bbB^n / \Gam_\calO^1$ contains all possible types of geodesic submanifolds of a complex hyperbolic $n$-manifold. Indeed, let $\{e_i\}$ be the standard basis for our vector space $V \cong \bbC^{n+1}$. Restricting the form to the span of $\{e_1, \dots, e_m, e_{n+1}\}$ visibly gives an arithmetic subgroup $\Lam_\calO^1 < \SU(m,1)$ contained in $\Gam_\calO^1$, where $\SU(m,1)$ denotes the standard subgroup in the notation of \S\ref{subsec:simple}. This leads to properly immersed totally geodesic complex hyperbolic $m$-submanifolds for all $m$. Considering the real span of this subspace instead, the restriction of $h$ now defines a quadratic form on $\bbR^{n+1}$ stabilized by the standard $\SO_0(m,1)$ subgroup, hence one similarly finds real hyperbolic submanifolds of every real dimension between $2$ and $n$.
\end{ex}

\begin{ex}\label{ex:Kottwitz}
At the other extreme, assume that $d = n+1$ is prime, so $\bfG(\ell)$ is a subgroup of the group $\SL_1(D)$ of units of $D$ with reduced norm $1$. Fix a maximal order $\calO$ of $D$ and consider the arithmetic group $\Gam_\calO^1 < \SU(n,1)$. For example, in the case $n = 2$ all fake projective planes arise from this construction \cite{CartwrightSteger}.

Were $\bbB^n / \Gam_\calO^1$ to contain a proper geodesic subspace that is complex hyperbolic, then we would obtain an injection $\M_r(D^\prime) \hookrightarrow D$, where $D^\prime$ is a central simple $\ell_0^\prime$-division algebra of degree $d^\prime$ for some totally complex subfield $\ell^\prime_0$ of $\ell^\prime$ whose intersection $\ell_0$ with $\ell$ is totally real. Moreover, $r d^\prime$ divides $n+1$.

Since $n+1$ is prime, we claim that $r = 1$ and $d^\prime = n+1$. Indeed, the case $r = d$ and $d^\prime = 1$ is impossible since the algebra $\M_{n+1}(\ell_0^\prime)$ contains subalgebras of degree $1 < e < n+1$. After taking the tensor product with $\ell^\prime$, this contradicts the fact that $D$ has prime degree and hence contains no such subalgebras. Now we rule out the case that $\ell_0^\prime$ is a proper subfield of $\ell^\prime$. To see this, note that in this case the $\ell$-algebraic unitary group $\bfG$ associated with $D$ now contains $\bfH \otimes_{\ell_0} \ell$, where $\bfH$ is the $\ell_0$-algebraic unitary group associated with $D^\prime$. However, $\bfH$ is noncompact at exactly one real place of $\ell_0$ and $\bfG$ is consequently noncompact at exactly $[\ell : \ell_0]$ real places of $\ell$. Since $\bfG$ is noncompact at exactly one real place of $\ell$, we have that $\ell = \ell_0$ and $D = D^\prime$.

It follows that $\bbB^n / \Gam_\calO^1$ contains no complex hyperbolic geodesic subspaces. A similar argument shows that $\bbB^n / \Gam_\calO^1$ also contains no real hyperbolic subspaces of any dimension $m \ge 2$. In other words, the only properly immersed geodesic subspaces of $\bbB^n / \Gam_\calO^1$ are the closed geodesics.
\end{ex}

\begin{rem}
We leave it to the reader to show that the case $\M_r(D)$ for $r > 1$ and $D$ of possibly composite degree $d > 1$ gives intermediate examples living between the previous two.
\end{rem}

\subsection{Geodesic submanifolds and the intersection pairing}\label{ssec:AG}

In this section, we give some background on the algebraic and complex geometry of complex hyperbolic manifolds and explain the proof of Theorem~\ref{thm:mainAG}. For simplicity we restrict to the case complex dimension two, then give references for the analogous results in higher dimension at the end of the section.

If $M$ is a closed complex hyperbolic $2$-manifold, then the associated complex hyperbolic manifold $M$ is a smooth projective surface of general type whose Chern numbers satisfy $c_1^2(M) = 3\, c_2(M)$ \cite{HirzebruchProp}. We recall that $c_1^2(M)$ is the self-intersection of the canonical divisor $K_M \in H_2(M)$ and $c_2(M)$ is the Euler number of $M$. Moreover, Yau's famous solution to the Calabi conjecture says that this equality of Chern numbers holds for surfaces of general type if and only if $M = \bbB^2 / \Gam$ for some torsion-free cocompact lattice $\Gam$ in $\PU(2,1)$, where $\bbB^2$ denotes the unit ball in $\bbC^2$ with its complex hyperbolic metric \cite{Yau}. It is a well-known consequence of Selberg's lemma that any cocompact lattice $\Gam < \PU(2,1)$ contains a finite index subgroup $\Gam^\prime$ so that $\bbB^2 / \Gam^\prime$ is a manifold, hence there is no loss of generality in assuming this is the case.

In the noncompact setting, as described in detail in the proof of Theorem \ref{thm:SRht}, we can replace a lattice $\Gam < \PU(2,1)$ with a subgroup of finite index so that $M = \bbB^2 / \Gam$ is a manifold of the form $X \ssm D$, where $X$ is a smooth projective surface and $D$ is a certain smooth divisor on $X$. In this setting, one has the \emph{logarithmic} canonical divisor $K_X + D \in H_2(X)$, where $K_X$ is the canonical divisor on $X$. To unify the two cases, when $M$ is compact we consider $D$ to be the empty divisor and so $X = M$.

\begin{proof}[Proof of Theorem~\ref{thm:mainAG}]
Applying a very general theorem due to Borel \cite[Thm.~3.7]{BorelMetric} to the compactification \cite{AMRT, Mok}, an immersed totally geodesic complex hyperbolic submanifold of $M$ determines an irreducible complex curve $C_0$ on $M$ that compactifies to a curve $C$ on $X$. Hirzebruch--H\"ofer relative proportionality \cite[\S B.3]{HirzebruchHofer} is precisely the statement that a projective curve $C$ on $X$ determines a totally geodesic subspace of $M$ if and only if Equation \eqref{eq:Proportionality} holds with respect to the divisor $K_X + D$. See \cite[Thm.\ 0.1]{MSVZ} for our statement, which is slightly more general than the original. In particular, the theorem is an immediate consequence of Theorem \ref{thm:main} and Corollary \ref{cor:main}.
\end{proof}

\begin{rem}
Margulis asked whether arithmeticity is detected purely by the topology of the locally symmetric space. Theorem \ref{thm:mainAG} implies that if $M$ contains a totally geodesic curve, then arithmeticity of $M$ is completely determined by the restriction of the intersection pairing on $H_2(M)$ to the curves on $M$. In other words, arithmeticity is detected by the topology of the underlying variety.
\end{rem}

We now give references for where one can give a precise version of our statements in higher dimensions. For the equality of (logarithmic) Chern numbers that characterizes higher-dimensional complex hyperbolic manifolds, see \cite{Tsuji}. A general version of relative proportionality that uniquely determines complex hyperbolic totally geodesic submanifolds was given by M\"{u}ller-Stach, Viehweg, and Zuo. See Theorem 2.3 and Addendum 2.4 in \cite{MSVZ}.

\medskip

This interpretation of our main results leads to the following question.

\begin{qtn}
Can one classify the totally geodesic curves on nonarithmetic Deligne--Mostow orbifolds?
\end{qtn}

This seems particularly approachable in dimension two in the sense that the underlying spaces for these orbifolds are closely related to blowups of the complex projective plane. In particular, one can connect geodesic curves to classical plane curves, where immersed geodesic curves will have singularities arising from self-intersections. One can then use relative proportionality to detect which curves are totally geodesic.

\subsection{The proof of Theorem~\ref{thm:Siu}}\label{ssec:Siu}

\begin{proof}[Proof of Theorem~\ref{thm:Siu}]
With notation as in the statement of the theorem, suppose that $f : M \to N$ is a surjective mapping so that $f(Z_i)$ is contained in a proper geodesic subspace of $N$ for each $i$. By hypothesis, the induced map on fundamental groups induces a Zariski dense homomorphism $\rho : \Gam \to \PU(m,1)$, where $\Gam$ is the lattice in $\SU(n,1)$ associated with $M$ and $d = \dim(N)$. Let $\Lam < \PU(d,1)$ be the fundamental group of $N$.

We now proceed exactly as in Proposition \ref{prop:setup}. Let $\Del_i < \Gam$ be the subgroup associated with $Z_i$. The assumption on $f$ implies that $\rho(\Del_i)$ is not a Zariski dense subgroup of $\PU(d,1)$. In particular, we pass to an infinite subsequence so that each each $Z_i$ is a geodesic submanifold of $M$ of the same type with associated with the standard subgroup $W$ of $G$ and $\rho(\Del_i)$ has Zariski closure contained in some conjugate of a fixed proper, nontrivial, positive-dimensional $J < \PU(d,1)$.

Let $S_i$ be the closed subgroup of $G$ associated with $\Del_i$ by Lemma \ref{lem:TGtomsr}\eqref{num:TGtomsr3}. The appropriate exterior power of the Lie algebra of $H$ defines a vector space $V$ such that for each $i$ we can construct a measurable section from $S_i / \Del_i$ to $(G \times \bbP(V)) / \Gam$ in order to build a $W$-invariant, $W$-ergodic measure on the bundle that projects to Haar measure on $G / \Gam$. This and Proposition~\ref{prop:comppu} allow us to apply Theorem~\ref{thm:SR}\eqref{thm:SRcomp} to conclude that $\rho$ extends to a continuous homomorphism from $\SU(n, 1)$ to $\PU(d, 1)$. Therefore, $d = n$ and $\rho(\Gam)$ is a lattice in $\PU(n,1)$. Since $\Lam$ is also a lattice and $\rho(\Gam) \le \Lam$, we see that $\rho(\Gam)$ is a finite index subgroup. It follows that $f$ is homotopic to a cover. This proves the theorem.
\end{proof}

\subsection{The proof of Theorem~\ref{thm:notTG}}\label{ssec:NotTG}

\begin{proof}[Proof of Theorem~\ref{thm:notTG}]
To prove the theorem we must take care to differentiate between $\Tr\Ad$ for $\SL_2(\bbC)$ and $\PO_0(3,1)$, since lattices in $\PU(3,1)$ naturally contain lattices in $\PO_0(3,1)$, but results on nonintegral traces for Kleinian groups typically are stated for $\SL_2(\bbC)$. We denote the two adjoint representations by $\Ad_\bbC$ and $\Ad_\bbR$, respectively, as they denote the traces of the adjoint representation for $\fraks\frakl_2(\bbC)$ considered as a complex (resp.\ real) vector space.

There are many known hyperbolic $3$-manifolds $M$ with nonintegral traces. See \cite[\S 5.2.2]{MaclachlanReid} for a closed example and \cite{Cheseblo} for a hyperbolic link complement. However, in the literature this means that for a given lift of $\Gam = \pi_1(M)$ from $\PSL_2(\bbC)$ to $\SL_2(\bbC)$ there is a $\gam \in \Gam$ with a nonintegral trace for the associated $2 \times 2$ matrix, i.e., that $\Tr(\gam)$ is not an algebraic integer. The embedding of $\Gam$ in $\PSL_2(\bbC)$ is the lattice embedding associated with the complete hyperbolic structure, and we can choose any lift to $\SL_2(\bbC)$, as $\gam$ will have nonintegral trace for any chosen lift. In what follows, we identify $\gam$ with a matrix in $\SL_2(\bbC)$.

Let $\lam^{\pm 1}$ be the eigenvalues of $\gam$. Then $\lam^{\pm 1}$ both have minimal polynomial $t^2 - \Tr(\gam) t + 1$, and it follows that $\lam$ and $\lam^{-1}$ also are not algebraic integers. A direct calculation gives:
\begin{align*}
\Tr(\Ad_\bbC(\gam)) &= \lam^2 + \lam^{-2} + 1 \\
&= \Tr(\gam)^2 - 1
\end{align*}
It follows easily from the assumption that $\Tr(\gam)$ is not an algebraic integer that $\Tr(\Ad_\bbC(\gam))$ is also not an algebraic integer.

Now, suppose that $M$ is a geodesic submanifold of a complex hyperbolic $n$-manifold. When $n = 3$, the inclusion $\Gam < \pi_1(M)$ gives
\[
\Gam < \PO_0(3,1) < \PU(3,1).
\]
When $n > 3$, we instead have
\[
\Gam < \SO_0(3,1) < \PO_0(n,1) < \PU(n,1),
\]
where we have chosen a lift of $\Gam$ from $\PO_0(3,1)$ to $\SO_0(3,1)$. In each case, we want to show that $\Tr(\Ad_{\fraks\fraku(n,1)}(\gam))$ is not an algebraic integer. This will contradict Theorem~\ref{thm:SRht}(3) and complete the proof of the theorem.

For $n = 3$, we can conjugate and choose our hermitian form so that $\gam$ maps to the image in $\PO_0(3,1) < \PU(3,1)$ of the diagonal matrix with entries $\{\lam^2, 1, 1, \lam^{-2}\}$. A direct calculation from the root space decomposition for $\fraks\fraku(3,1)$ gives:
\begin{align*}
\Tr(\Ad_{\fraks\fraku(3,1)}(\gam)) &= 4 \lam^2 + 4 \lam^{-2} + \lam^4 + \lam^{-4} + 5 \\
&= (\lam^2 + \lam^{-2})^2 + 4 (\lam^2 + \lam^{-2}) + 3
\end{align*}
Since the above expression is a monic integral polynomial in $\lam^2 + \lam^{-2}$, which is not an algebraic integer, we conclude that $\Tr(\Ad_{\fraks\fraku(3,1)}(\gam))$ is also not an algebraic integer. We leave it to the reader to verify that the same proof works for higher $n$ where $3$ is now a sum of roots of unity and $4$ is changed to $2(n-1)$.
\end{proof}

We note that Theorem~\ref{thm:notTG} does not apply for large families of hyperbolic $3$-manifolds. For example, if $M$ is a finite-volume hyperbolic $3$-manifold commensurable with a non-Haken hyperbolic $3$-manifold, then $\pi_1(M)$ has all traces integral \cite[\S5.2]{MaclachlanReid}. More concretely, if $K \subset S^3$ is a \emph{small knot}, then $\Tr(\Ad(\gam))$ is an algebraic integer for all $\gam \in \pi_1(S^3 \ssm K)$. The figure-eight knot is the only knot whose complement is arithmetic \cite{ReidKnot}, and one can show that it is commensurable with a totally geodesic submanifold of a complex hyperbolic manifold arising from the construction in Example~\ref{ex:Simple}. It follows that if $K$ is any other hyperbolic knot and $M$ is a complex hyperbolic manifold containing $S^3 \ssm K$ as a geodesic submanifold, then $M$ is necessarily nonarithmetic. We know of no such example.

\begin{qtn}
Which knot (or link) complements are isometric to a totally geodesic submanifold of a complex hyperbolic $n$-manifold for some $n \ge 3$? Are there combinatorial/diagrammatic obstructions to a knot complement being a geodesic submanifold of a complex hyperbolic manifold?
\end{qtn}

There are only two known commensurability classes of nonarithmetic lattices in $\PU(3,1)$ \cite{Deraux3d}. The known geometric constructions of orbifolds in each commensurability class might allow one to at least find some examples. Of course, Theorem~\ref{thm:main} implies that only finitely many commensurability classes of knot and link complements could be associated with each commensurability class of nonarithmetic lattices in $\PU(3,1)$.

\bibliographystyle{abbrv}
\bibliography{Biblio}

\end{document}